\newcommand{\algname}[1]{\def\@currentlabelname{#1}}
\DeclareMathOperator*{\argmin}{\arg\!\min}
\DeclareMathOperator*{\lmo}{lmo}
\newcommand{\C}{\mathcal{C}}
\begin{document}

\title{Adaptive Conditional Gradient Descent}
\titlerunning{Adaptive Conditional Gradient Descent}
\author{Abbas Khademi \and Antonio Silveti-Falls} 
\authorrunning{A. Khademi and A. Silveti-Falls}
\institute{A. Khademi \at 
School of Mathematics, Institute for Research in Fundamental Sciences (IPM), P.O.Box 19395-5746, Tehran, Iran.\\
School of Mathematics and Computer Science, Iran University of Science and Technology, Tehran, Iran.
\\
\email{\href{abbaskhademi92@gmail.com}{abbaskhademi92@gmail.com}}\\
https://orcid.org/0000-0002-8276-6821
\and
A. Silveti-Falls \emph{(Corresponding Author)} \at 
CVN, CentraleSupélec, Université Paris-Saclay, Inria, France.\\ \email{\href{tonys.falls@gmail.com}{tonys.falls@gmail.com}}\\
https://orcid.org/0000-0002-8165-6348
}
\date{Received: date / Accepted: date}
\maketitle
\abstract{
Selecting an effective step-size is a fundamental challenge in first-order optimization, especially for problems with non-Euclidean geometries.
This paper presents a novel adaptive step-size strategy for optimization algorithms that rely on linear minimization oracles, as used in the Conditional Gradient or non-Euclidean Normalized Steepest Descent algorithms. Using a simple heuristic to estimate a local Lipschitz constant for the gradient, we can determine step-sizes that guarantee sufficient decrease at each iteration.
More precisely, we establish convergence guarantees for our proposed Adaptive Conditional Gradient Descent algorithm, which covers as special cases both the classical Conditional Gradient algorithm and non-Euclidean Normalized Steepest Descent algorithms with adaptive step-sizes. Our analysis covers optimization of continuously differentiable functions in non-convex, quasar-convex, and strongly convex settings, achieving convergence rates that match state-of-the-art theoretical bounds.
Comprehensive numerical experiments validate our theoretical findings and illustrate the practical effectiveness of Adaptive Conditional Gradient Descent. The results exhibit competitive performance, underscoring the potential of the adaptive step-size for applications.
}
\keywords{Adaptive step-size \and Linear minimization oracle \and First-order method \and Projection-free method \and Non-convex optimization  \and Conditional Gradient \and Frank-Wolfe \and Steepest-descent}
\subclass{65K05\and 90C26\and 90C30 \and 49M37}

\section{Introduction}

\subsection{Problem Context}
First-order methods based on linear minimization oracles offer a powerful paradigm for structured optimization, providing either projection-free optimization for constrained problems or geometry-adaptive descent for unconstrained problems. These methods have found applications ranging from high-dimensional matrix completion \cite{shalev2011large}, sparse optimization \cite{cai2011orthogonal,locatello2018matching}, constrained multiobjective optimization problems \cite{assunccao2021conditional,gonccalves2024away}, submodular optimization \cite{calinescu2011maximizing,vondrak2008optimal}, convex mixed-integer optimization \cite{hendrych2025convex}, and traffic flow problems \cite{leblanc1975efficient} to training deep neural networks with billions of parameters \cite{pethick2025training,pethick2025generalized}, where the linear minmization oracle structure naturally encodes problem geometry through its extremal points.

This framework unifies two important algorithms that originate from different problem settings. In constrained optimization, it yields the Conditional Gradient algorithm, which sidesteps potentially expensive projections onto a constraint set. In unconstrained optimization, it produces the Normalized Steepest Descent algorithm, which adapts the direction of descent to the geometry of the problem (or our choosing). Both methods share a critical dependency: their convergence rates and practical performance hinge on appropriate step-size selection, which is a challenge that existing approaches address only partially.

While various step-size strategies have been proposed in the literature, ranging from conservative, descent-driven choices that require knowledge of the global Lipschitz constant of the gradient \cite{dem1967minimization} to unassuming, parameter-free open-loop schedules \cite{dunn1978conditional} and adaptive backtracking approaches \cite{pedregosa2020linearly}, none fully exploit the local geometry that the iterative nature of these algorithms naturally encounters. We develop an adaptive step-size strategy that estimates local Lipschitz constants of the gradient on-the-fly using previous gradient information followed by a backtracking correction procedure, combining the robustness of backtracking with the efficiency of curvature-aware steps.

To formalize our approach, we consider the following general problem setting
\begin{equation}\tag{P}\label{P}
    \min\limits_{x\in\mathcal{X}} f(x),
\end{equation}
where $\mathcal{X}\subseteq \mathbb{R}^n$ and $f\colon\mathbb{R}^n\to\mathbb{R}$ is a continuously differentiable function. 
We will assume that the gradient $\nabla f$ is Lipschitz-continuous with respect to a norm $\|\cdot\|$, i.e., there exists a constant $L > 0$ such that for all $x, y \in\mathcal{X}$,
$$
\|\nabla f(x) - \nabla f(y)\|_\ast \leq L \|x - y\|,
$$
which we will also sometimes refer to as $f$ being $L$-smooth. While we will assume that $L$ exists, we will not assume knowledge of the value of $L$ in our study of the problem.

For a non-empty compact convex set $\mathcal{C}$, most crucially, we will assume that we are able to solve a so-called Linear Minimization Oracle (LMO) of the form
\begin{equation}
\lmo_{\mathcal{C}}(x) := \argmin\limits_{v\in\mathcal{C}}\langle x,v\rangle,
\end{equation}
where $\lmo_{\mathcal{C}}(x)$ can be any minimizer in the case that there are many, The LMO always returns a boundary point of $\mathcal{C}$ (typically an extremal point) and this structure is what enables both the projection-free property of the Conditional Gradient algorithm and the adaptation to the geometry of the problem in the Normalized Steepest Descent algorithm. The specific structure of the LMO output, i.e., sparse for $\ell^1$-balls, dense for $\ell^\infty$-balls, rank-$1$ for nuclear norm-balls, full-rank for operator norm-balls, etc, endows these algorithms with their distinctive computational properties.

We will study a general algorithmic template, \nameref{alg:general}, based on the LMO for solving \eqref{P}, which is applicable to both the unconstrained and constrained variants; when applied to unconstrained problems it is sometimes referred to as Normalized Steepest Descent algorithm, the Continuous Greedy algorithm, or the Generalized Matching Pursuit algorithm while it is called the Frank-Wolfe algorithm or the Conditional Gradient algorithm when applied to constrained problems. The only difference in the template between the two types of problems is a slight modification to the update direction $d^k$ in the constrained case, to ensure feasibility. However, both algorithms share the same core structure in that they query an LMO at each iteration to obtain a direction, then take a step in that direction and it is this shared structure that allows us to develop a unified step-size strategy that works for both.

\paragraph{Conditional Gradient} The Conditional Gradient algorithm is a classical method to solve \eqref{P} when $\mathcal{X}=\mathcal{C}$. At every iteration $k$, the algorithm linearizes the objective function $f$ around the current iterate $x^k$ and then minimizes this linearization over the constraint set $\mathcal{C}$, which amounts to querying the LMO. Then, since the LMO is always in $\mathcal{C}$, taking a convex combination between the current feasible iterate $x^k$ and the LMO output $v^k$ is guaranteed to remain in $\mathcal{C}$ by convexity. Since the iterates never leave $\mathcal{C}$, there is no need to project back onto $\mathcal{C}$ and the method is thus \emph{projection-free}.

\paragraph{Normalized Steepest Descent} The Normalized Steepest Descent algorithm is also a classic method to solve \eqref{P} when $\mathcal{X} = \mathbb{R}^n$. It is analogous to doing normalized gradient descent but in a non-Euclidean norm, which is advantageous in many problems whose geometry is not necessarily Euclidean. By selecting a particular norm, we can enforce structured updates (sparse, dense, low-rank, full-rank, etc) or improve the convergence rate for problems with particular geometries. Similar to the Conditional Gradient algorithm, it requires computing a linearization of the objective function $f$ around the current iterate $x^k$ that is then minimized over a unit-ball $\mathcal{C}$ for some norm to get some update direction. Rather than taking a convex combination with the current iterate, though, Normalized Steepest Descent directly steps in the direction output by the LMO.

\subsection{Related Work}

Step-size selection in LMO-based methods has been extensively studied since the seminal work of Frank and Wolfe \cite{frank1956algorithm}. Classical approaches range from the short-step step-size rule, which requires knowledge of the global Lipschitz constant $L$ and sets $t_k = \min\{-\langle \nabla f(x^k), d^k \rangle/(L\|d^k\|^2), 1\}$ \cite{bomze2024frank,dunn1978conditional}, to parameter-free open-loop schedules such as $t_k = 2/(2+k)$ that sacrifice adaptivity for simplicity \cite{dunn1978conditional}. While exact line search maximizes per-iteration progress, its computational cost often outweighs its benefits, especially when gradients are expensive to evaluate \cite{dem1970approximate}. These classical methods either require problem-specific constants that are difficult to estimate in practice or fail to adapt to the local geometry of the objective function.

There is a closely related work \cite{pedregosa2020linearly} to ours, which introduced a unified backtracking framework for both Conditional Gradient and Normalized Steepest Descent that eliminates the need for knowing the global Lipschitz constant $L$. Their adaptive strategy maintains an estimate of the local Lipschitz constant that is decreased by a multiplicative factor (typically 0.9) when backtracking occurs. While this approach successfully adapts to the problem landscape without prior knowledge of $L$, it suffers from slow adaptation when the iterates move into regions of significantly lower curvature—requiring multiple iterations for the estimate to decrease substantially. Our work builds directly upon this foundation by incorporating gradient-based local Lipschitz constant estimation to accelerate this adaptation process.

Several other step-size strategies have been proposed for LMO-based methods. For the Conditional Gradient side, there has recently been introduced \cite{hendrych2025secant} a secant-based line search that adapts to local smoothness with reduced computational cost compared to exact line search, though it requires additional gradient evaluations. Other works \cite{wirth2025accelerated} have demonstrated accelerated convergence with open-loop step-sizes under specific geometric conditions, while optimistic step-size variants that go beyond the short-step rule have also been proposed \cite{martinez2025beyond}. On the Normalized Steepest Descent side, the connection to continuous greedy \cite{calinescu2011maximizing} and generalized matching pursuit has been formalized in \cite{locatello2017unified}, which provided a unified optimization view establishing explicit convergence rates for matching pursuit methods in an optimization sense, demonstrating the deep connection between these seemingly separate algorithms.

Our analysis holds for quasar-convex functions, which are a broader class of functions than convex functions or even star-convex functions, and which have gotten more attention recently \cite{guminov2023accelerated,guminov2019primal,martinez-rubio2025smooth,wang2023continuized}. The convergence of the Conditional Gradient algorithm under star-convexity has been analyzed in a recent work \cite{millan2025frank} under a few different step-size schedules which include open-loop, short-step, and backtracking. Of note is that their analysis of the adaptive backtracking algorithm \cite{pedregosa2020linearly} uses the star-convexity assumption, attaining similar rates to ours for the more general quasar-convex functions.

Another relevant line of work \cite{malitsky2019adaptive,malitsky2024adaptive} developed gradient-based heuristics for estimating local smoothness constants in proximal gradient methods. Their approach uses consecutive gradients and iterates to estimate the local Lipschitz constant without requiring additional function evaluations. While their work focused on proximal methods, the principle of exploiting gradient information for local curvature estimation has broader applicability, as we demonstrate in this work.

\paragraph{Contributions} 
This paper presents an adaptive step-size method for \nameref{alg:general} that estimates the local Lipschitz constant of the gradient through a simple heuristic. Unlike methods that rely on the global Lipschitz constant (as in the short-step) or backtracking alone \cite{pedregosa2020linearly}, our approach adapts efficiently to varying problem landscapes where global properties are difficult to estimate or exhibit significant variation.
Our theoretical contributions are rigorous convergence analyses of both variants of \nameref{alg:general} for \eqref{P} under convex, quasar-convex, and non-convex settings. Notably, under an additional quasar-convexity assumption, we establish improved convergence rates in the non-convex case compared to \cite{pedregosa2020linearly}. Our use of local Lipschitz constants furthermore yields simplified proofs throughout.
We validate our approach's superiority through comprehensive synthetic experiments designed to isolate and demonstrate the benefits of local adaptation across diverse problem geometries and conditioning. By warm-starting the backtracking procedure with gradient-based estimates of the local Lipschitz constant, our algorithm rapidly adapts to changes in the problem landscape, which addresses a limitation of pure backtracking approaches. These experiments confirm our hypothesis that leveraging local curvature information leads to faster practical convergence. 

\subsection{Outline}
The structure of the remainder of the paper is as follows.
Section~\ref{sec:preliminaries} reviews the Conditional Gradient method, Normalized Steepest Descent and greedy methods, their connection through the LMO, and classic step-size choices for both methods: the short-step rule, open-loop schedules, and line-search.
Section~\ref{sec:adaptive_step} presents our adaptive step-size strategy with local Lipschitz constant estimation, detailing our methods for overcoming the slow adaptation of existing pure backtracking schemes.
Section~\ref{sec:algorithms} describes the proposed optimization algorithms in full and provides a convergence analysis that covers strongly convex, convex, quasar-convex, and non-convex settings.
Section~\ref{sec:numerical} presents numerous numerical experiments, evaluating the Adaptive Conditional Gradient and Normalized Steepest Descent algorithms.
Finally, Section~\ref{sec:conclusion} concludes the paper with a summary of findings and possible avenues to continue this research direction.

\subsection*{Notation and Conventions}
We will denote the natural numbers $\mathbb{N}$, which includes $0$, and use $\mathbb{N}^*$ to denote $\mathbb{N}\setminus\{0\}$. For a Hilbert space, the inner product is denoted by $\langle \cdot, \cdot \rangle$. 
The dual norm of $\|\cdot\|$ is denoted by $\|\cdot\|_{\ast}$ and is defined by $\|v\|_{\ast} := \max_{\|x\| \leq 1} \langle v, x \rangle$. We denote the closed ball of radius $\rho > 0$ centered at the origin by $\mathcal{B}(\rho) := \{x : \|x\| \leq \rho\}$ and, when $\rho$ is omitted, we will take by convention the unit-ball.
Moreover, we define functional-value gap at $x \in \mathcal{X}$ as $h(x):=f(x) - f^\star$, where $f^\star=\inf_{x\in\mathcal{X}}f(x)$ is the optimal objective value, and the Frank-Wolfe gap as $\max_{v \in \mathcal{X}} \langle \nabla f(x), x - v \rangle$, a measure of non-stationarity.
For $ x \in \mathbb{R}^n $ and $ q \geq 1 $, let $ \|x\|_q $ denote the $\ell^q$-norm of $ x $, defined as $ \|x\|_q = (\sum_{i=1}^n |x_i|^q)^{1/q} $.
For $ X \in \mathbb{R}^{n\times m}$ and $q\in[1,+\infty]$, let $\|X\|_{\mathcal{S}^q}$ denote the Schatten-$q$ norm of $X$, defined by applying the $\ell^q$ norm to the singular values of $X$.
For $ a \in \mathbb{R} $, define $ \mathrm{sign}(a) $ as $ \mathrm{sign}(a) = 1 $ if $ a > 0 $, $ -1 $ if $ a < 0 $, and $ 0 $ if $ a = 0 $. 
For $ v \in \mathbb{R}^n $, $ \mathrm{sign}(v) \in \{-1, 0, 1\}^n $ denotes the vector of signs of the elements of $ v $. We will denote the indicator function for a set $A$ by $\iota_{A}\colon x\mapsto\begin{cases}0, & x\in A\\ +\infty, & x\not\in A\end{cases}$ and the normal cone $N_A$ to be its subdifferential in the convex sense. Finally, we denote the linear span of a set $V$ by $\mathrm{span}(V)$ and the convex hull of $V$ by $\mathrm{conv}(V)$.
We will denote the Minkowski multiplication of a set $A\subset\mathbb{R}^n$ by a real number $\lambda\in\mathbb{R}$ by $\lambda A := \{\lambda a\colon a\in A\}$. A function $f\colon\mathbb{R}^n\to\mathbb{R}$ will be said to be \emph{strongly convex} with respect to the norm $\|\cdot\|$ if $\exists \mu>0$ such that $f-\mu\|\cdot\|^2$ is convex. Moreover, a differentiable function $f$ will be called \emph{$\eta$-quasar-convex} if $\exists x^\star\in\argmin\limits_{x\in\mathcal{X}}f(x),~ \eta\in]0,1]\colon \forall y\in\mathbb{R}^n, f(x^\star) - f(y) \geq \frac{1}{\eta}\langle \nabla f(y),x^\star-y\rangle$.

\section{Preliminaries}\label{sec:preliminaries}

We being this section with the generic LMO template in \nameref{alg:general}, which was also studied in \cite{pedregosa2020linearly}. This is a very general template that encompasses both the Conditional Gradient algorithm and the Normalized Steepest Descent algorithm, depending on whether $\mathcal{X} = \mathcal{C}$ or $\mathcal{X} = \mathbb{R}^n$.

\begin{algorithm}[hbpt!]
\algname{GenCG}
\caption{Generic Conditional Gradient (GenCG)}\label{alg:general}
\textbf{Input}: Initial point $x^0 \in \mathcal{X}$, compact convex set $\mathcal{C}$ (typically a closed unit-ball $\mathcal{B}_{\lmo}(1)$), step-size sequence $\{t_k\}_{k \geq 0} \subseteq [0, +\infty[$.\\
\textbf{Initialization}:
 Set iteration counter $k\leftarrow 0$.\\
\textbf{Repeat}: For $k = 0, 1, 2, \dots$, perform the following steps:
\begin{itemize}
\item[] 
Compute  $v^k\leftarrow \lmo\limits_{\mathcal{C}}(\nabla f(x^k))$.
\item[]
Set $d^k \leftarrow \begin{cases}
    v^k, & \mbox{if $\mathcal{X}=\mathbb{R}^n$ \quad (Normalized Steepest Descent)}\\
   v^k-x^k. & \mbox{if $\mathcal{X}=\mathcal{C}$ \ \ \quad (Conditional Gradient)}
\end{cases}$
\item[] Set $x^{k+1} \leftarrow x^k + t_k d^k$.
\item[] Update $k\leftarrow k + 1$.
\end{itemize}
\textbf{Stop}: Terminate when convergence criteria is satisfied.
\end{algorithm}
While the LMO is accessible for a multitude of sets $\mathcal{C}$, we will focus especially on the case where $\mathcal{C} = \mathcal{B}_{\lmo}(1)$ is the closed unit-ball in some norm that we will denote $\|\cdot\|_{\lmo}$ for clarity.\footnote{All of the convergence results we will present readily extend to the more general case where $\mathcal{C}$ is a level-set of some general Minkowski gauge function.} In this case, the LMO returns an element of the subdifferential (in the convex sense) of the dual norm $\|\cdot\|_{\lmo\ast}$, i.e.,
\begin{equation*}
	d\in\lmo_{\mathcal{C}}(x)\iff-d\in\partial \|x\|_{\lmo\ast}.
\end{equation*}
Moreover, the output of the LMO is always on the boundary of $\mathcal{C}$, typically an extremal point/atom of the set in a convex-analytic sense, and this endows the output of the LMO with a certain structure based on what the extremal points of $\mathcal{C}$ are. For instance, when $\mathcal{C}$ is the unit-ball for the $\ell^1$ norm (i.e., $\|\cdot\|_{\lmo}=\|\cdot\|_1$), the atoms are the $1$-sparse vectors and the output of the LMO is then typically $1$-sparse for non-zero inputs. In contrast, when $\mathcal{C}$ is the unit-ball for the $\ell^{\infty}$ norm (i.e., $\|\cdot\|_{\lmo}=\|\cdot\|_{\infty}$), the atoms are dense sign vectors and so the output of the LMO is then typically dense for non-zero inputs. This extends to spectral norms on matrices (also known as Schatten $p$-norms); for the nuclear norm or Schatten $1$-norm, the atoms of $\mathcal{C}$ are rank-$1$ matrices and so the output of the LMO in this case is typically rank-$1$ while for the operator norm or Schatten $\infty$-norm, the atoms of $\mathcal{C}$ are orthogonal matrices and the output of the LMO is typically full-rank. 

The LMO is also familiar in the constrained optimization community due to its use in the Frank-Wolfe \cite{frank1956algorithm} or Conditional Gradient \cite{levitin1966constrained} algorithms, as well as in the greedy optimization community where it has been used for greedy and matching pursuit problems. As we will soon describe, the algorithms we consider for solving \eqref{P} are centered around the LMO and are inspired by the Conditional Gradient algorithm. We will consider two distinct cases for \eqref{P}, for which we will develop two corresponding algorithms and analyses: either $\mathcal{X} = \mathcal{C}\subset \mathbb{R}^n$ for some non-empty compact convex set $\mathcal{C}$ (and the problem is constrained) or $\mathcal{X} = \mathbb{R}^n$ (and the problem is unconstrained). While most of our results in the unconstrained setting hold for general non-empty sets $\mathcal{C}$ so long as they are compact and convex, we will sometimes state our results specifically for the case that $\mathcal{C}$ is the closed unit-ball $\mathcal{B}_{\lmo}(1)$ induced by some norm of our choosing $\|\cdot\|_{\lmo}$, as this is often the most relevant for problems in practice and it greatly simplifies the statement of the results in the unconstrained setting.

In both unconstrained and constrained settings, \nameref{alg:general} provides a certificate of criticality for \eqref{P} in the form of the quantity
\begin{equation}
\mathrm{Gap}(x^k):=-\langle \nabla f(x^k), d^k\rangle = \begin{cases}\langle \nabla f(x^k), x^k-v^k\rangle, & \text{(Conditional Gradient)}\\ \|\nabla f(x^k)\|_{\lmo\ast}. & \text{(Normalized Steepest Descent)}
\end{cases}
\end{equation}
Since $\nabla f(x^k)$ and $d^k$ are already computed to run the algorithm, the cost to compute this certificate is cheap and can be used a stopping criterion. For the unconstrained case where $\mathcal{X} = \mathbb{R}^n$, a point $x^\star$ is a critical point if and only if $\|\nabla f(x^\star)\|_{\lmo\ast} = 0$, which is equivalent to $\mathrm{Gap}(x^\star)=0$. For the constrained case where $\mathcal{X} = \mathcal{C}$, criticality is characterized by $-\nabla f(x)\in N_{\mathcal{C}}(x)$ which is equivalent to the so-called Frank-Wolfe gap $\max_{v \in \mathcal{C}} \langle \nabla f(x), x - v \rangle$ being zero, which is also equivalent to $\mathrm{Gap}(x^k)$ being zero and inspires our choice of name for this quantity. We finally remark that the choice of norm $\|\cdot\|_{\lmo}$ for the LMO directly relates to the convergence guarantees we will get, which are all in terms fo $\mathrm{Gap}$.

\subsection{Conditional Gradient Method}

The Conditional Gradient method, originally proposed by Frank and Wolfe \cite{frank1956algorithm} in 1956 for quadratic programming and later generalized by Levitin and Polyak \cite{levitin1966constrained} to broader settings, has become more popular since its introduction to machine learning in \cite{clarkson2010coresets,jaggi2013revisiting}.
Its appeal lies in its simplicity, relying only on gradient evaluations and the LMO instead of projections, which may be more costly for certain choices of $\mathcal{C}$ \cite{combettes2021complexity}. This makes it particularly applicable to many large-scale structured optimization problems that arise in machine learning and beyond \cite{braun2022conditional,jaggi2013revisiting}.
At each iteration $k$, the algorithm approximates $f$ around the current point $x^k$ using the linear model:
\begin{equation}
f(x^k) + \langle \nabla f(x^k), v - x^k \rangle,
\end{equation}
where $v \in \mathcal{C}$. Minimizing this linear model over $\mathcal{C}$ then corresponds to querying the LMO for $\mathcal{C}$ at the point $\nabla f(x^k)$, which gives a direction $v^k \in \mathcal{C}$. The next iterate $x^{k+1}$ is finally constructed as a convex combination of the current point $x^k$ and the oracle-selected direction $v^k$, parameterized by a step-size $t_k\in[0,1]$:
\begin{equation}
x^{k+1} = (1 - t_k)x^k + t_k v^k,
\end{equation}
or equivalently,
\begin{equation}
x^{k+1} = x^k + t_k d^k,
\end{equation}
where $d^k:=v^k-x^k=\lmo_{\mathcal{C}} (\nabla f(x^k))-x^k$. This update ensures that $x^{k+1}$ remains feasible within $\mathcal{C}$, thanks to the convexity of $\mathcal{C}$, as illustrated in Figure~\ref{fig:FW}. Consequently, no projection onto $\mathcal{C}$ is needed, making the Conditional Gradient method a projection-free method.

\begin{figure}[htpb!]
\centering
\begin{tikzpicture}[font=\small]
\coordinate (A) at (2,4); 
\coordinate (B) at (2,2.75); 
\coordinate (C) at (2,1.625); 
\coordinate (D) at (4.25,2.55); 
\coordinate (E) at (0.75,2.5); 
\coordinate (F) at (3.5,4.25); 
\coordinate (G) at (4.75,3.57); 
\coordinate (H) at (2.58,1.75); 
\coordinate (I) at (2.24,3.05); 
\coordinate (J) at (0.5,1.25); 
\coordinate (K) at (5,2.3); 
\draw [thick, fill=gray!20] (E) -- (A) -- (F) -- (G) -- (D) -- (C) -- cycle;
\draw [dashdotdotted] (A) -- (C);
\draw [dotted] (A) -- (H);
\draw [dashed, thick, blue] (J) -- (K);
\draw [->, red, >=stealth, thick] (A) -- (I);
\draw [->, >=stealth, thick] (A) -- (B);
\fill (A) circle (1.2pt) node[above] {$x^k$}; 
\fill (B) circle (1.2pt) node[left] {$x^{k+1}$}; 
\fill (C) circle (1.2pt) node[below] {$v^k$}; 
\node [above right, red] at (I) {$-\nabla f(x^k)$};
\node at (4,3) {$\mathcal{C}$};
\end{tikzpicture}
\caption{Illustration of one step of the Conditional Gradient algorithm. Starting at $x^k$, the gradient $\nabla f(x^k)$ is computed and used to compute $v^k$, an output of the LMO. The next iterate $x^{k+1}$ is obtained by moving along the line segment connecting $x^k$ to $v^k$. The blue dashed line represents a supporting hyperplane to the set $\mathcal{C}$ dictated by its normal direction $-\nabla f(x^k)$.}
\label{fig:FW}
\end{figure}
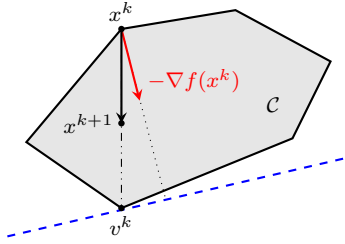

\subsection{Normalized Steepest Descent, Continuous Greedy, and Generalized Matching Pursuit}
While the Conditional Gradient method discussed in the previous part is suited to solving constrained problems, by a slight modification to the definition of the update direction $d^k$ in \nameref{alg:general}, one can arrive at a variant capable of solving unconstrained problems. This algorithm has appeared several times throughout the literature under different names for different cases, e.g., the Normalized Steepest Descent method \cite{boyd2004convex} when $\mathcal{C}$ is the closed unit-ball for some norm $\|\cdot\|_{\lmo}$, the continuous greedy method \cite{calinescu2011maximizing,vondrak2008optimal}, or generalized matching pursuit \cite{locatello2017unified} when $\mathcal{C}$ is the convex hull of some set of atoms or a dictionary to be optimized over. 

\paragraph{Normalized Steepest Descent} Recall that the typical gradient descent algorithm with fixed step-size $t$ can be written,
\begin{equation}
x^{k+1} = x^k - t \nabla f(x^k) = \argmin\limits_{x\in\mathbb{R}^n}f(x^k) + \langle \nabla f(x^k), x-x^k\rangle + \frac{1}{2t}\|x-x^k\|_2^2.
\end{equation}
If one replaces the Euclidean term $\|x-x^k\|_2^2$ by an arbitrary norm $\|x-x^k\|_{\lmo}^2$ then the resulting algorithm is called \emph{Steepest Descent with respect to the norm} $\|\cdot\|_{\lmo}$ \cite{kelner2014almost,nesterov2012efficiency,nutini2015coordinate} and is given by
\begin{equation}
x^{k+1} = x^k + t \|\nabla f(x^k)\|_{\lmo\ast} \lmo\limits_{\mathcal{C}}(\nabla f(x^k)),
\end{equation}
where $\|\cdot\|_{\lmo\ast}$ is the dual norm to $\|\cdot\|_{\lmo}$ and $\mathcal{C}$ is the closed unit-ball induced by the norm $\|\cdot\|_{\lmo}$. When $\|\cdot\|_{\lmo}$ is the Euclidean norm, the dual norm $\|\cdot\|_{\lmo\ast}$ is Euclidean again and Steepest Descent reduces to the familiar gradient descent algorithm. When the function $f$ has a gradient $\nabla f$ that is Lipschitz-continuous with respect to the norm $\|\cdot\|_{\lmo}$, taking $t=\frac{1}{L}$ recovers the analog of the short-step in the unconstrained setting. Indeed, by definition of the dual norm it follows that
\begin{equation}
t \|\nabla f(x^k)\|_{\lmo\ast} = t \max\limits_{v\in\mathcal{C}}\langle \nabla f(x^k), v\rangle = t \left\langle \nabla f(x^k), -\lmo_{\mathcal{C}}(\nabla f(x^k))\right\rangle.
\end{equation}
On the other hand, if we replace $\|x-x^k\|_2^2$ by $\iota_{t \mathcal{C}}(x-x^k)$, then the resulting method can be interpreted as a non-Euclidean trust region method, as noted in \cite{pethick2025training}, with an update to $x^k$ that is always normalized in the norm $\|\cdot\|_{\lmo}$. This method is the so-called \emph{Normalized Steepest Descent with respect to the norm} $\|\cdot\|_{\lmo}$ and is illustrated for an example in Figure~\ref{fig:NSD}. When $t_k$ is chosen like in the open-loop step-size setting, the method is exactly the unconstrained analog to the Conditional Gradient algorithm.

\begin{figure}[htpb!]
\centering
\begin{tikzpicture}[scale=1, font=\small]
\coordinate (A) at (0,0); 
\coordinate (C) at (-3,0); 
\coordinate (B) at (-2,0); 
\coordinate (H) at (-1.38, 0.92); 
\coordinate (I) at (-.6,.4); 
\coordinate (J) at (-3,-1.5); 
\coordinate (K) at (-.6,2.1); 
\draw[thick, fill=gray!20] (-2,0) -- (0,2) -- (2,0) -- (0,-2) -- cycle;
\draw[dashdotdotted] (A) -- (B);
\draw[dotted] (A) -- (H);
\draw[dashed, thick, blue] (J) -- (K);
\draw[->, red, thick] (A) -- (I);
\draw[->, thick] (A) -- (C);
\fill (A) circle (1.2pt) node[below] {$x^k$};
\fill (B) circle (1.2pt) node[above] {$v^k$};
\fill (C) circle (1.2pt) node[above] {$x^{k+1}$};
\node[above right, red] at (I) {$-\nabla f(x^k)$};
\node[rotate=45] at (0.5,-1.2) {$x^k+\mathcal{B}_{\ell^1}(1)$};
\end{tikzpicture}
\caption{One step of the Normalized Steepest Descent method using the $\ell^1$ unit-ball centered at $x^k$. Starting at $x^k$, the gradient $\nabla f(x^k)$ is computed and used to compute $v^k$, an output of the LMO over $\mathcal{C}=\mathcal{B}_{\ell^1}(1)$. The next iterate $x^{k+1}$ is then found by adding $t_kv^k$ to $x^k$. The blue dashed line represents the supporting hyperplane to the set $x^k+\mathcal{B}_{\ell^1}(1)$ dictated by its normal direction $-\nabla f(x^k)$.}
\label{fig:NSD}
\end{figure}
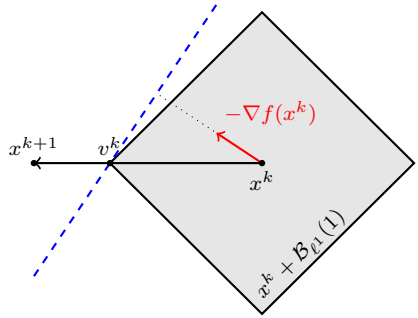


Beyond Normalized Steepest Descent with standard norm-balls, several related methods follow similar LMO-based frameworks. The continuous greedy method \cite{calinescu2011maximizing,vondrak2008optimal} applies this approach to submodular maximization, using a specific step-size schedule $t_k = 1/N$ that ensures the iterates sum to one after $N$ iterations. Generalized matching pursuit \cite{locatello2017unified} extends the framework to arbitrary sets of atoms $\mathcal{A}$, seeking minimizers over $\text{span}(\mathcal{A})$ by querying the LMO over $\text{conv}(\mathcal{A} \cup -\mathcal{A})$. When $\mathcal{A}$ consists of standard basis vectors, this reduces to greedy coordinate descent \cite{locatello2018matching}. 

While these methods demonstrate the versatility of the LMO framework, they typically employ fixed or problem-specific step-sizes. Our work focuses on adaptive step-size strategies for the core Normalized Steepest Descent method with norm-ball constraints, where the local Lipschitz constant estimation can be efficiently computed and provides clear computational benefits.


\subsection{Classic Step-size Strategies}

The convergence and practical performance of LMO-based methods critically depend on step-size selection. We review the main strategies that have been studied for both Conditional Gradient and Normalized Steepest Descent, highlighting connections that are often overlooked in the literature.

\paragraph{Short-Step} The short-step can be found by denoting $t_{\max} = \begin{cases} 1, & \text{(Conditional Gradient)}\\ +\infty, & \text{(Normalized Steepest Descent)}\end{cases}$
and then minimizing a quadratic upper bound on the progress from $x^k$ to $x^{k+1}$ from the descent lemma over $[0,t_{\max}]$:
\begin{equation*}
\argmin\limits_{t \in [0,t_{\max}]} f(x^k) + t \langle \nabla f(x^k),d^k\rangle + \frac{L}{2} t^2\|d^k\|^2.
\end{equation*}
It is given in closed-form by
\begin{equation}
t_k = \min\left\{-\frac{\langle \nabla f(x^k),d^k\rangle}{L\|d^k\|^2},t_{\max}\right\}=\begin{cases}
\min\left\{-\frac{\langle \nabla f(x^k), d^k \rangle}{L\|d^k\|^2}, 1\right\}, & \text{(Conditional Gradient)} \\
\frac{\|\nabla f(x^k)\|_{\lmo\ast}}{L \|d^k\|^2}. & \text{(Normalized Steepest Descent)}
\end{cases}
\end{equation}

For Conditional Gradient, this appears as a quasi-adaptive step-size that depends on the Frank-Wolfe gap and the Lipschitz constant $L$ of the gradient. For Normalized Steepest Descent with $\mathcal{C} = \mathcal{B}(\rho)$ and $\|\cdot\|=\|\cdot\|_{\lmo}$, this is precisely Steepest Descent with fixed step-size $1/L$ in the norm $\|\cdot\|_{\lmo}$. For example, in the $\ell^2$ geometry, the update becomes 
\begin{equation*}
x^{k+1} = x^k + \frac{\|\nabla f(x^k)\|_2}{L} \lmo\limits_{\mathcal{B}_{\ell^2}(1)}(\nabla f(x^k)) = x^k + \frac{\|\nabla f(x^k)\|_2}{L} \left(\frac{-\nabla f(x^k)}{\|\nabla f(x^k)\|_2}\right) = x^k - \frac{1}{L}\nabla f(x^k).
\end{equation*}
This reveals that standard gradient descent with step-size $1/L$ can be interpreted to be Normalized Steepest Descent with the short-step. The short-step was also used in Steepest Descent under the name \emph{sharp operator} in stochastic spectral descent \cite{carlson2015stochastic}.

\paragraph{Open-loop Schedule} An open-loop schedule requires no knowledge of the parameter of the problem (e.g., $L$) and is given, for all $k\in\mathbb{N}$, by
\begin{equation}
t_k = \frac{2}{k+2}.
\end{equation}

This schedule works for both Conditional Gradient and Normalized Steepest Descent without requiring knowledge of $L$ but, on the other hand, it is not adaptive and will remain large even if the iterates are close to a critical point. While widely used in the Conditional Gradient literature \cite{jaggi2013revisiting,wirth2025accelerated}, its applicability to unconstrained optimization is less appreciated. For a ``parameter-free'' gradient descent method, the LMO perspective suggests using open-loop schedules with Normalized Steepest Descent, with no line search or Lipschitz constant estimation needed. Not only is this method parameter-free, it is even affine-invariant \cite{lacoste2013affine}.

\paragraph{Line Search} A line-search schedule maximizes per-iteration progress along the current direction and is given by
\begin{equation}
t_k \in \argmin_{t \in [0,t_{\max}]} f(x^k + td^k).
\end{equation}

Though optimal per iteration when the minimization is performed exactly, the computational cost of multiple function evaluations can outweigh the benefits, motivating adaptive strategies like those we develop in this work.

\section{Adaptive Step-Size Selection with Local Lipschitz Estimation}\label{sec:adaptive_step}

In search of a step-size that is as effective as the line-search schedule without the added computational overhead, one can start by adapting the short-step strategy discussed in Section~\ref{sec:preliminaries}. Normally, this requires knowledge of the global Lipschitz constant $L$ that is often unknown or an overly conservative upper bound of the curvature when known. We now review and develop adaptive strategies that estimate local smoothness properties to achieve better practical performance with this step-size.

\paragraph{Pure Backtracking}

In \cite{pedregosa2020linearly} the authors introduced an elegant backtracking scheme that maintains an estimate $L_k$ of the local Lipschitz constant. At each iteration, the algorithm computes a candidate step-size using $L_k$ in place of $L$ in the short-step formula, then checks the sufficient decrease condition
\begin{equation*}
f(x^k + t_k d^k) \leq f(x^k) + t_k \langle \nabla f(x^k), d^k \rangle + \frac{L_k}{2} t_k^2 \|d^k\|_2^2.
\end{equation*}
We warn the reader that the norm used to compute the LMO (if it is over a unit-ball) and the norm used to perform the backtracking procedure do \emph{not} have to match. The backtracking procedure as in \cite{pedregosa2020linearly} always uses the Euclidean norm $\|\cdot\|_2$ for backtracking, even when the LMO is used over a general compact convex set $\mathcal{C}$.

If the sufficient decrease condition is satisfied, the algorithm accepts the step-size and decreases the estimate of the local Lipschitz constant for the next iteration, $L_{k+1} = \eta L_k$ where $\eta \in ]0,1[$ (typically 0.9). If the sufficient decrease condition is not satisfied, the algorithm increases $L_k$ by a factor $\beta > 1$ (typically 2) and recomputes the step-size, repeating until the condition holds.

While this approach successfully eliminates dependence on the global constant $L$ and adapts to changes in the geometry, it suffers from slow adaptation when moving to regions of lower curvature. Since $L_k$ decreases by only a factor of 0.9 per iteration, transitioning from a high-curvature region (where $L_k \approx 100$, say) to a flat region (where the local Lipschitz constant is $1$) requires over 40 iterations. Figure~\ref{fig:L} illustrates this slow decay on an example problem on video colocalization, showing how the estimate often oscillates between two regimes rather than quickly adapting to the local geometry.

\begin{figure}[htpb!]
\centering
\includegraphics[width=0.6\linewidth]{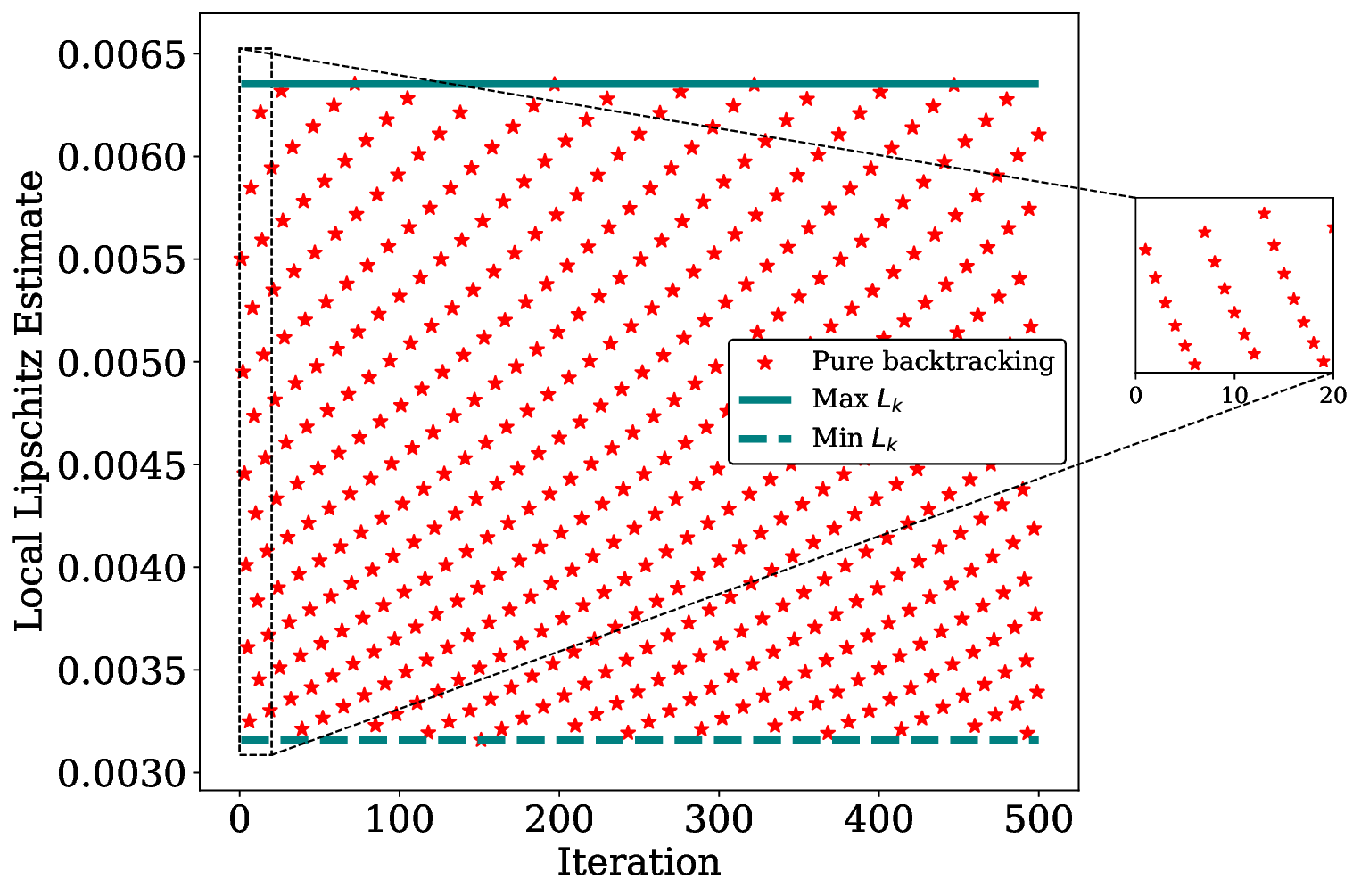}%
\caption{The local Lipschitz constant found at each iteration with the pure backtracking procedure used in \cite{pedregosa2020linearly} is oscillating heavily between two values, roughly $0.0065$ and $0.0030$. }
\label{fig:L}
\end{figure}

\paragraph{Local Lipschitz Estimation via Gradient Differences}

Malitsky and Mishchenko \cite{malitsky2019adaptive} proposed estimating the local Lipschitz constant directly from consecutive gradients by
\begin{equation*}
L_k = \frac{\|\nabla f(x^k) - \nabla f(x^{k-1})\|_2}{\|x^k - x^{k-1}\|_2} + \delta,
\end{equation*}
where $\delta > 0$ is a small constant that we have added for numerical stability. Again, we use Euclidean norms for consistency with the backtracking framework. This estimate directly captures the local rate of gradient change, providing immediate adaptation to the local geometry rather than waiting for slow multiplicative decay. The global Lipschitz property ensures $L_k \leq L + \delta$, and this estimate typically underestimates the global $L$ in smooth regions, which is desirable for exploiting local flatness. This heuristic is also computationally efficient, requiring no additional gradient evaluations beyond those already computed, and completely eliminates the dependence on $L$. However, it does require one to store the previous iteration's gradient $\nabla f(x^{k-1})$ and iterate $x^{k-1}$ in order to compute $L_k$.

\subsection{Adaptive Step-Size Algorithm}

Integrating the heuristic of \cite{malitsky2019adaptive} into \nameref{alg:general} is not so simple when $\mathcal{C}$ is not Euclidean. One could take the maximum between the current $L_k$ at each iteration compared with the previous, so that the estimate becomes a tighter and tighter approximation of the constant $L$ as the algorithm progresses, but this does not take advantage of the local curvature, which might be very low and amenable to large steps. Instead, we will use the heuristic as a warm-start for an optimistic candidate step-size, followed by a modification of the backtracking procedure of \cite{pedregosa2020linearly}, if sufficient decrease is not met. We propose two adaptive step-size scaling strategies, called constant scaling and adjustable scaling, that leverage local smoothness combined with improved backtracking. The algorithm is outlined as \nameref{alg:adaptive-stepsize}. 

\begin{algorithm}[hbpt!]
\algname{AdaStep}
\caption{Adaptive Step-Size (AdaStep)}\label{alg:adaptive-stepsize}
\textbf{Input}: $x^{k-1},x^{k} \in\mathcal{X}$, direction $d^k$, $\delta>0$, and some $\beta>1$.\\
\textbf{General Steps}:
\begin{itemize}
    \item[]\textbf{(Step 1)} 
    Estimate the local Lipschitz constant:
    $$ 
    L_k \leftarrow \frac{\|\nabla f(x^k) - \nabla f(x^{k-1})\|_{\ast}}{\|x^k - x^{k-1}\|} + \delta.
     $$
    \item[]\textbf{(Step 2)}
    Compute the initial step-size:
    $$ 
    t_k \leftarrow 
    \begin{cases}
    \min\left\{-\frac{\langle \nabla f(x^k), d^k \rangle}{L_k\|d^k\|^2}, 1\right\}, & \text{(Conditional Gradient)} \\
    \frac{\|\nabla f(x^k)\|_{\lmo\ast}}{L_k\|d^k\|^2}. & \text{(Normalized Steepest Descent)}
    \end{cases}
     $$
    \item[]\textbf{(Step 3)}
    While the sufficient decrease condition is not satisfied, i.e.,
    $$ 
    f(x^k + t_k d^k) > f(x^k) + t_k \langle \nabla f(x^k), d^k \rangle + \frac{L_k}{2} t_k^2 \|d^k\|^2,
     $$
    update the Lipschitz estimate and step-size:
    $$ 
    L_k \leftarrow \beta L_k \quad\text{and}\quad  
    t_k \leftarrow \begin{cases}
    \min\left\{-\frac{\langle \nabla f(x^k), d^k \rangle}{L_k\|d^k\|^2}, 1\right\}, & \text{(Conditional Gradient)} \\
    \frac{\|\nabla f(x^k)\|_{\lmo\ast}}{L_k\|d^k\|^2}. & \text{(Normalized Steepest Descent)}
    \end{cases}
     $$
    Repeat until the condition is met.
\end{itemize}
\textbf{Output}: Step-size $t_k$.
\end{algorithm}
Upon termination, the updated iterate $x^{k+1} = x^k + t_k d^k$ satisfies the local descent property:
\begin{equation}
f(x^{k+1}) \leq f(x^k) + t_k \langle \nabla f(x^k), d^k \rangle + \frac{L_k}{2} t_k^2 \|d^k\|^2.
\end{equation}
This ensures decrease in either the norm of the gradient $\nabla f$ or the Frank-Wolfe gap without knowing the Lipschitz constant $L$ of the gradient $\nabla f$. It also allows for the step-size to increase if a larger step will provide more progress.

\nameref{alg:adaptive-stepsize} will continue until the termination criterion is met, which differs depending on whether or not the problem is non-convex and whether or not the minimal functional-value $\inf\limits_{x\in\mathcal{X}}f(x)$ is known. When $f$ is convex (or quasar-convex) and $\inf\limits_{x\in\mathcal{X}}f(x)$ is (approximately) known, we will use as a termination criterion the functional-value gap
\begin{equation*}
    f(x^k) - \inf\limits_{x\in\mathcal{X}}f(x).
\end{equation*}
Meanwhile when $f$ is non-convex or if the $\inf\limits_{x\in\mathcal{X}}f(x)$ is not known, we will use the general LMO gap discussed in the beginning of Section~\ref{sec:preliminaries},
\begin{equation*}
    \mathrm{Gap}(x^k):=-\langle \nabla f(x^k),d^k\rangle = \begin{cases} \langle \nabla f(x^k),x^k-v^k\rangle, & \text{(Conditional Gradient)}\\ \|\nabla f(x^k)\|_{\lmo\ast}, & \text{(Normalized Steepest Descent)}\end{cases}
\end{equation*}
as the termination criterion in \nameref{alg:general}. We recall that this quantity is relatively cheap to compute, since $d^k$ and $\nabla f(x^k)$ are both already computed to run the algorithm.

\subsubsection{Adaptive Step-Size with Constant Scaling}

The constant scaling method enhances \nameref{alg:adaptive-stepsize} by applying an initial scaling to $L_k$ to promote larger step-sizes. After estimating $L_k$ in step 1, we adjust by scaling:
$$ 
L_k \leftarrow \gamma L_k,
$$
where $\gamma \in ]0,1]$ is a fixed scaling factor. This reduction in $L_k$ potentially increases $t_k$. The algorithm then proceeds to step 3, checking the sufficient decrease condition. If the condition is not met, $L_k$ is iteratively increased by the backtracking factor $\beta\in]1,+\infty[$,
$$ 
L_k \leftarrow \beta L_k,
$$
and $t_k$ is recomputed. This method, termed \emph{adaptive constant scaling}, uses fixed parameters $\gamma$ and $\beta$ to balance aggressive step-size selection with robustness.

\subsubsection{Adaptive Step-Size with Adjustable Scaling}

The adjustable scaling method builds on the constant scaling approach by dynamically updating the scaling factor $\gamma$ to adapt to the function's local curvature. After estimating $L_k$ and applying the initial scaling $L_k \leftarrow \gamma L_k$, the algorithm proceeds as in the constant scaling method. However, $\gamma$ is periodically updated, e.g., every $r$ iterations, based on the total number of backtracking steps in the previous period:
\begin{itemize}
    \item \textbf{Conservative}: If no backtracking occurs, indicating a conservative $L_k$, reduce $\gamma$:
    $$ 
    \gamma \leftarrow 0.9 \gamma.
     $$
    \item \textbf{Aggressive}: If backtracking exceeds $r$ iterations, indicating an aggressive $L_k$, increase $\gamma$:
    $$ 
    \gamma \leftarrow 1.1 \gamma.
     $$
    \item \textbf{Balanced}: If backtracking occurs but totals $r$ or fewer steps, $\gamma$ remains unchanged, as the algorithm is well-calibrated.
\end{itemize}
This \emph{adaptive adjustable scaling} method enhances flexibility by adjusting $\gamma$ to match the function's smoothness, potentially accelerating convergence in problems with varying curvature. Both scaling methods are computationally efficient, requiring only scalar updates. In our experiments, we used consistent initial parameters for both methods: $r=10$, $\gamma = 1/4$, $\delta = 10^{-10}$, and $\beta = 2$.

\section{Algorithm and Convergence Analysis}\label{sec:algorithms}

We present here the \emph{Adaptive Conditional Gradient Descent} algorithm (\nameref{alg:ACGD}) which is simply the template from \nameref{alg:general} with our adaptive step-size strategy \nameref{alg:adaptive-stepsize}. Afterwards, we give some remarks followed by its convergence analysis under a variety of assumptions on the regularity of $f$, ranging from non-convex to strongly convex.

\subsection{Algorithm}

\begin{algorithm}[hbpt!]
\algname{ACGD}
\caption{Adaptive Conditional Gradient Descent (ACGD)}\label{alg:ACGD}
\textbf{Input}: Compact convex set $\mathcal{C}$ (typically a closed unit-ball $\mathcal{B}_{\lmo}(1)$), feasible set $\mathcal{X}$ (either $\mathbb{R}^n$ or $\mathcal{C}$), parameters 
$\gamma \in ]0,1]$, $\delta>0$, $t_{\max} > 0$ and $\beta>1$.\\
\textbf{Initialization}:
Set $k = 0$ and choose $x^{k-1}, x^k \in \mathcal{X}$.\\
\textbf{General Steps}: 
\begin{itemize}
    \item[] \textbf{(Step 1)} 
    Compute  
    $$v^k\leftarrow \lmo_{\mathcal{C}}(\nabla f(x^k)).$$
    \item[] \textbf{(Step 2)} 
    Set descent direction 
    $$d^k \leftarrow \begin{cases}
    v^k, & \mbox{if $\mathcal{X}=\mathbb{R}^n$}\\
   v^k-x^k. & \mbox{if $\mathcal{X}=\mathcal{C}$}
    \end{cases}$$
    \item[]\textbf{(Step 3)} 
    Estimate the local Lipschitz constant:
    $$ 
    L_k \leftarrow \frac{\|\nabla f(x^k) - \nabla f(x^{k-1})\|_{\ast}}{\|x^k - x^{k-1}\|} + \delta.
    $$
    \item[]\textbf{(Step 4)} Scale local Lipschitz 
    $$ 
    L_k \leftarrow \gamma L_k. 
    $$
    \item[]\textbf{(Step 5)}
    Compute the initial step-size:
    $$ 
    t_k \leftarrow 
     \min\left\{-\frac{\langle \nabla f(x^k),d^k\rangle}{L_k\|d^k\|^2},t_{\max}\right\}.
     $$
    \item[]\textbf{(Step 6)}
    While 
     $ 
    f(x^k + t_k d^k) > f(x^k) + t_k \langle \nabla f(x^k), d^k \rangle + \frac{L_k}{2} t_k^2 \|d^k\|^2,
    $ 
    do:
    $$ 
    L_k \leftarrow \beta L_k \quad\text{and}\quad  
    t_k \leftarrow  \min\left\{-\frac{\langle \nabla f(x^k),d^k\rangle}{L_k\|d^k\|^2},t_{\max}\right\}.
     $$
    \item[] \textbf{(Step 7)} Update iterate 
    $$x^{k+1} \leftarrow x^k + t_k d^k  \quad \text{and} \quad  k\leftarrow k + 1.$$
\end{itemize}
\textbf{Stop}: Terminate when a convergence criterion is met.
\end{algorithm}

The backtracking procedure used terminates in a finite number of iterations because the estimated local Lipschitz constant $L_k$ is bounded above by $L + \delta$, where $L$ is the global Lipschitz constant of $\nabla f$. Moreover, the exponential growth of $L_k$ (via $\beta > 1$) ensures that $L_k$ will eventually exceed $L+\delta$, guaranteeing termination.

\begin{remark}
    In the unconstrained case ($\mathcal{X}=\mathbb{R}^n$), the norm $\|\cdot\|_{\lmo}$ used in Steps 3 to 6 does \emph{not} have to be the same norm used to define the closed unit-ball $\mathcal{B}_{\lmo}(1)$ taken for $\mathcal{C}$. This means it is possible to compute the normalized steepest descent direction with respect to an arbitrary norm $\|\cdot\|_{\lmo}$ (Step 1) in combination with backtracking in a \emph{different} norm $\|\cdot\|$. As we will explain the numerical section, it turns out that doing the backtracking procedure in the $\ell^2$ norm works well even when $\mathcal{C}$ is not the $\ell^2$ closed unit-ball, although in the theoretical section we will also show that the convergence guarantee is for $\|\nabla f(x^k)\|_{\lmo\ast}$.
\qed
\end{remark}

After termination of Step 6 in \nameref{alg:ACGD}, the updated iterate satisfies the following local descent property
\begin{equation}\label{eq:local_descent}
f(x^k + t_k d^k) \leq f(x^k) + t_k \langle \nabla f(x^k), d^k \rangle + \frac{L_k}{2} t_k^2 \|d^k\|^2,
\end{equation}
which is the key inequality that we will use to prove the convergence rates.

\begin{remark}
In our adaptive step-size method, we define the update direction as $v^k = \lmo_{\mathcal{B}_{\lmo}(1)}(\nabla f(x^k))$, where the LMO is taken over the closed unit-ball $\mathcal{B}_{\lmo}(1)$ induced by some norm that we denote $\|\cdot\|_{\lmo}$. Formally, $\lmo_{\mathcal{C}}(v) = \argmin_{\|s\|_{\lmo} \leq 1} \langle s, v \rangle$, which guarantees $\|v^k\|_{\lmo} = 1$. To estimate the local Lipschitz constant $L_k$ efficiently, we could use the same norm in both the $\lmo$ computation and the backtracking, i.e., $\|\cdot\|_{\lmo}=\|\cdot\|$. In that case, the calculation for $L_k$ simplifies as follows
\begin{equation*}
L_k = \frac{\|\nabla f(x^k) - \nabla f(x^{k-1})\|_{\ast}}{\|x^k - x^{k-1}\|}+ \delta = \frac{\|\nabla f(x^k) - \nabla f(x^{k-1})\|_{\ast}}{t_{k-1}}+ \delta, 
\end{equation*}
because $\|x^k - x^{k-1}\| = \|t_{k-1}v^{k-1}\| = t_{k-1}$ since, under this assumption, $\|v^{k-1}\|=\|v^{k-1}\|_{\lmo}=1$.
This formulation reduces computational cost by leveraging the fact that the output of the LMO has a fixed norm, so the quantity $\|x^k - x^{k-1}\|$ can be known in advance, eliminating the need to compute distances between iterates explicitly and thus avoiding having to store $x^{k-1}$. However, we will see that it is eventually useful to allow the norms used to calculate $L_k$ and the norm used to calculate the LMO to be \emph{decoupled}, since estimating the local Lipschitz constant using $\ell^2$ norms performs well in practice with directions coming from non-Euclidean LMOs.
\qed
\end{remark}

%

\begin{corollary}\label{L1}
Let $k\in\mathbb{N}$ and consider the step-size $t_k$, iterate $x_k$, and direction $d^k$ generated by \nameref{alg:ACGD}. Then,
\begin{equation*}
t_k \langle \nabla f(x^k), d^k \rangle +\frac{L_k}{2} t_k^2\|d^k\|^2\leq 0.
\end{equation*}
\end{corollary}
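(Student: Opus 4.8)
The plan is to treat the claim as a direct consequence of the defining formula for the step-size $t_k$ together with the sign of $\langle \nabla f(x^k), d^k\rangle$. First I would establish the key preliminary fact that $\langle \nabla f(x^k), d^k\rangle \leq 0$, i.e., that $-\langle \nabla f(x^k), d^k\rangle = \mathrm{Gap}(x^k) \geq 0$. In the Conditional Gradient case, $d^k = v^k - x^k$ with $v^k = \lmo_{\mathcal{C}}(\nabla f(x^k))$, so the optimality of the LMO gives $\langle \nabla f(x^k), v^k\rangle \leq \langle \nabla f(x^k), x^k\rangle$ because $x^k \in \mathcal{C}$, hence $\langle \nabla f(x^k), d^k\rangle \leq 0$. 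In the Normalized Steepest Descent case, $d^k = v^k$, and since $0 \in \mathcal{C}$ (the unit-ball), the LMO optimality yields $\langle \nabla f(x^k), v^k\rangle \leq \langle \nabla f(x^k), 0\rangle = 0$. Either way the inner product is non-positive.

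Write $g := \langle \nabla f(x^k), d^k\rangle \leq 0$ and $D := \|d^k\|^2$. If $d^k = 0$ then both terms in the claimed expression vanish and the inequality holds trivially with equality, so assume $D > 0$. Recall that $t_k$ is produced in Steps 5--6 as $t_k = \min\{-g/(L_k D),\, t_{\max}\}$ with the final accepted value of $L_k > 0$; since $-g/(L_k D) \geq 0$ and $t_{\max} > 0$, we have $t_k \geq 0$.

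I would then split into the two cases dictated by the $\min$. If the interior value is selected, $t_k = -g/(L_k D)$, direct substitution gives $t_k g + \frac{L_k}{2}t_k^2 D = -g^2/(L_k D) + g^2/(2L_k D) = -g^2/(2L_k D) \leq 0$. If instead the cap is active, $t_k = t_{\max} \leq -g/(L_k D)$, which rearranges to $g \leq -t_{\max} L_k D$; multiplying by $t_{\max} > 0$ and adding $\frac{L_k}{2}t_{\max}^2 D$ yields $t_{\max} g + \frac{L_k}{2}t_{\max}^2 D \leq -t_{\max}^2 L_k D + \frac{L_k}{2}t_{\max}^2 D = -\frac{L_k}{2}t_{\max}^2 D \leq 0$, closing both cases.

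Equivalently, and perhaps more transparently, one may observe that $\phi(t) := tg + \frac{L_k}{2}t^2 D$ is an upward parabola with $\phi(0) = 0$ and vertex at $t^\star = -g/(L_k D) \geq 0$; since $t_k \in [0, t^\star]$ and $\phi$ is non-increasing on $[0, t^\star]$, we get $\phi(t_k) \leq \phi(0) = 0$. I do not anticipate a genuine obstacle here: the content is a one-line computation per case, and the only points requiring care are the non-positivity of $\langle \nabla f(x^k), d^k\rangle$ (supplied by LMO optimality in each mode) and checking that the $t_{\max}$ truncation still leaves $t_k$ on the correct side of the parabola's vertex.
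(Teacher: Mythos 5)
Your proof is correct and takes essentially the same approach as the paper's: both arguments rest on the construction of the step-size, namely $0 \le t_k \le -\langle \nabla f(x^k), d^k\rangle/(L_k\|d^k\|^2)$, followed by elementary algebra on the quadratic. The paper compresses your two cases (and your parabola remark) into a single chain of inequalities, $t_k \le -\frac{\langle \nabla f(x^k), d^k\rangle}{L_k\|d^k\|^2} \le -2\frac{\langle \nabla f(x^k), d^k\rangle}{L_k\|d^k\|^2}$, then rearranges and multiplies by $t_k \ge 0$; your explicit verification that $\langle \nabla f(x^k), d^k\rangle \le 0$ via LMO optimality is sound but is implicit in the paper's treatment.
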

\begin{proof}
For $t_k=0$ the claim obviously holds so consider $t_k>0$.

Starting from the inequality:
For all $k\in\mathbb{N}$, by the construction of the step-size $t_k$, it holds
\begin{equation*}
0 < t_k \leq -\dfrac{\langle \nabla f(x^k), d^k \rangle}{L_k \| d^k \|^2} \leq -2\dfrac{\langle \nabla f(x^k), d^k \rangle}{L_k \| d^k \|^2}. 
\end{equation*}
Multiplying both sides by $\frac{L_k\|d^k\|^2}{2}$ and rearranging, we find
\begin{equation*}
    \langle \nabla f(x^k),d^k\rangle + \frac{L_k}{2}t_k\|d^k\|^2\leq 0,
\end{equation*}
which, after multiplying by $t_k\geq 0$, gives the desired claim.
\qed 
\end{proof}

\begin{corollary}
Let $k\in\mathbb{N}$ and consider consecutive iterates $x^k$ and $x^{k+1}$ generated by \nameref{alg:ACGD}. Then, $f(x^{k+1}) \leq f(x^k)$, i.e., the functional-values are non-increasing.
\end{corollary}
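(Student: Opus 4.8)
The plan is to chain together the two facts already established just before this statement: the local descent property guaranteed upon termination of the backtracking loop, and the sign estimate from Corollary~\ref{L1}. Since Step 7 of \nameref{alg:ACGD} sets $x^{k+1} = x^k + t_k d^k$, the local descent inequality \eqref{eq:local_descent} reads
\begin{equation*}
f(x^{k+1}) \leq f(x^k) + t_k \langle \nabla f(x^k), d^k \rangle + \frac{L_k}{2} t_k^2 \|d^k\|^2.
\end{equation*}

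I would then invoke Corollary~\ref{L1}, which asserts precisely that the sum of the last two terms is non-positive, namely
\begin{equation*}
t_k \langle \nabla f(x^k), d^k \rangle + \frac{L_k}{2} t_k^2 \|d^k\|^2 \leq 0.
\end{equation*}
Substituting this bound into the right-hand side of the descent inequality immediately yields $f(x^{k+1}) \leq f(x^k)$, which is the claim.

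There is essentially no obstacle here: the result is a one-line combination of the two preceding facts. The only point worth flagging is that the local descent property \eqref{eq:local_descent} is available for \emph{every} $k$ because the backtracking loop in Step 6 always terminates in finitely many steps, as argued in the text immediately following the algorithm. Indeed, $L_k$ is bounded above by $L + \delta$ by the global Lipschitz assumption, and the multiplicative update $L_k \leftarrow \beta L_k$ with $\beta > 1$ forces $L_k$ to eventually exceed $L + \delta$; hence the loop exits and the exiting iterate $x^{k+1}$ satisfies \eqref{eq:local_descent}. With that termination guaranteed, the monotonicity conclusion is immediate and requires no further computation.
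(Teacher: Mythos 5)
Your proof is correct and follows exactly the paper's own route: combine the local descent property \eqref{eq:local_descent}, available upon termination of the backtracking loop, with the sign estimate of Corollary~\ref{L1} to conclude $f(x^{k+1}) \leq f(x^k)$. Your added remark on why the backtracking loop terminates finitely is a point the paper makes in the surrounding text rather than inside the proof, but it is the same argument.
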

\begin{proof}
From Corollary \ref{L1}, we know that:
$$
t_k \langle \nabla f(x^k), d^k \rangle +\frac{L_k}{2} t_k^2\|d^k\|^2\leq 0.
$$
Thus, it follows that $f(x^k+t_k d^k) \leq f(x^k)$.
This ensures that the functional-values are non-increasing, guaranteeing descent at each iteration.
\qed
\end{proof}

\begin{remark}
The main difference between our adaptive step-size method and the step-size method proposed in \cite{pedregosa2020linearly} lies in the handling of the Lipschitz constant estimate. Our method computes $L_k$ directly from gradient differences at each iteration and then modifies it if needed, allowing it to  adapt more quickly to the local curvature of a region. 
In contrast, the step-size method proposed in \cite{pedregosa2020linearly} initializes $L_{-1}$ heuristically as  
$$
\frac{\|\nabla f(x^0) - \nabla f(x^0 + \epsilon d^0)\|_{2}}{{\epsilon \|d^0\|_2}},
$$ 
where $\epsilon$ is a small constant, and updates it iteratively by increasing it during backtracking or decreasing it when the sufficient decrease condition is satisfied. This approach avoids recomputing the Lipschitz estimate from scratch but may result in conservative step-sizes, particularly in iterations where the sufficient decrease condition does not require adjustment through the backtracking loop. Note also that, technically speaking, there is no guarantee that this heuristic initialization for $L_{-1}$ is non-zero.
\qed
\end{remark}

\subsection{Unconstrained Problems ($\mathcal{X} = \mathbb{R}^n$)}

In the subsection, we will always assume that $\mathcal{X}=\mathbb{R}^n$, so that \eqref{P} is unconstrained. We start with a lemma that characterizes how much the functional-values must decrease across iterations.

\begin{lemma}\label{lem:unconstrained_local_descent}
    Let $k\in\mathbb{N}$ and consider the step-size $t_k$, iterate $x^k$, and direction $d^k$ generated by \nameref{alg:ACGD}. Then,
    \begin{equation*}
        f(x^{k+1}) \leq f(x^k) -\frac{1}{2 L_k\|d^k\|^2}\|\nabla f(x^k)\|_{\lmo\ast}^2.
    \end{equation*}
\end{lemma}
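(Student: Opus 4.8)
The plan is to begin from the local descent property \eqref{eq:local_descent}, which holds upon termination of the backtracking loop in Step~6 of \nameref{alg:ACGD} with the final accepted pair $(t_k, L_k)$,
\begin{equation*}
f(x^{k+1}) \leq f(x^k) + t_k \langle \nabla f(x^k), d^k\rangle + \frac{L_k}{2}t_k^2\|d^k\|^2,
\end{equation*}
and then to substitute the explicit closed form of the step-size into the right-hand side. The observation that makes this clean is that the right-hand side is precisely the quadratic model in $t$ whose unconstrained minimizer is exactly the step-size $t_k$ computed in Step~5, so evaluating it at $t_k$ returns the minimum of that model.

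First I would record the geometry of the unconstrained (Normalized Steepest Descent) setting: here $d^k = v^k = \lmo_{\mathcal{C}}(\nabla f(x^k))$ with $\mathcal{C} = \mathcal{B}_{\lmo}(1)$, and by the Gap identity established in Section~\ref{sec:preliminaries} one has $-\langle \nabla f(x^k), d^k\rangle = \|\nabla f(x^k)\|_{\lmo\ast} =: \mathrm{Gap}(x^k) \geq 0$. In this setting $t_{\max} = +\infty$, so the minimum in Step~5 is attained by its first argument and the accepted step-size is exactly
\begin{equation*}
t_k = -\frac{\langle \nabla f(x^k), d^k\rangle}{L_k\|d^k\|^2} = \frac{\|\nabla f(x^k)\|_{\lmo\ast}}{L_k\|d^k\|^2}.
\end{equation*}
(The degenerate case $\nabla f(x^k)=0$ gives $t_k=0$, $x^{k+1}=x^k$, and both sides of the claim coincide; otherwise $d^k\neq 0$ so the denominator is positive.)

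The remaining step is a direct algebraic substitution: writing $g := \|\nabla f(x^k)\|_{\lmo\ast}$ and $a := L_k\|d^k\|^2 > 0$, the right-hand side of the local descent property becomes $f(x^k) - t_k g + \tfrac{a}{2}t_k^2$ with $t_k = g/a$, which simplifies to $f(x^k) - g^2/(2a)$, i.e.\ the claimed bound. I expect the only genuine subtlety—the main obstacle to stating the bound so cleanly—to be the treatment of the truncation by $t_{\max}$ in Step~5. If $t_{\max}$ were finite and strictly smaller than $g/a$, then $t_k = t_{\max}$ would no longer minimize the quadratic model and the exact identity would degrade into the weaker estimate $f(x^{k+1}) \leq f(x^k) - \tfrac{1}{2}t_{\max}\,\|\nabla f(x^k)\|_{\lmo\ast}$. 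This is precisely why the lemma is confined to the unconstrained case, where the Normalized Steepest Descent convention $t_{\max}=+\infty$ guarantees the step-size equals the unconstrained minimizer and the sharp quadratic bound is recovered.
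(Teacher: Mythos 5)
Your proposal is correct and takes essentially the same approach as the paper: both start from the local descent property \eqref{eq:local_descent}, use that $-\langle \nabla f(x^k), d^k\rangle = \|\nabla f(x^k)\|_{\lmo\ast}$ in the unconstrained case, and substitute the closed-form step-size $t_k = \|\nabla f(x^k)\|_{\lmo\ast}/(L_k\|d^k\|^2)$ to finish by direct algebra. Your added remarks on the degenerate case $\nabla f(x^k)=0$ and on $t_{\max}=+\infty$ being what makes the sharp bound possible are minor elaborations that the paper leaves implicit.
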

\begin{proof}
    By the local descent property of \nameref{alg:ACGD} given in \eqref{eq:local_descent} and noting that $d^k = \lmo_{\mathcal{C}}(\nabla f(x^k))$ in the unconstrained case, we get
    \begin{equation*}
        f(x^{k+1})\leq f(x^k) - t_k\|\nabla f(x^k)\|_{\lmo\ast} + \frac{L_k}{2}t_k^2\|d^k\|^2.
    \end{equation*}
    By definition of $t_k$ in \nameref{alg:ACGD}, $t_k= \frac{\|\nabla f(x^k)\|_{\lmo\ast}}{L_k\|d^k\|^2}$, which leads to
    \begin{equation*}
        \begin{aligned}
            f(x^{k+1})&\leq f(x^k) - \left( \frac{\|\nabla f(x^k)\|_{\lmo\ast}}{L_k\|d^k\|^2}\right) \|\nabla f(x^k)\|_{\lmo\ast} + \frac{L_k }{2} \left( \frac{\|\nabla f(x^k)\|_{\lmo\ast}}{L_k\|d^k\|^2}\right)^2\|d^k\|^2 \\
            &
            = f(x^k) -\frac{1}{2 L_k\|d^k\|^2}\|\nabla f(x^k)\|_{\lmo\ast}^2.
        \end{aligned}
    \end{equation*}
\qed
\end{proof}

The following theorem establishes the convergence rate (to a critical point) for \nameref{alg:ACGD} in the unconstrained setting under the assumption that $f$ is bounded from below by $\inf\limits_{x\in\mathbb{R}^n}f(x) > -\infty$.
\begin{theorem}[Unconstrained Non-convex Convergence Rate]\label{thm:steepest_nonconvex}
Let $\mathcal{C}=\mathcal{B}_{\lmo}(1)$ be the closed unit-ball for some norm  $\|\cdot\|_{\lmo}$ and let $f: \mathbb{R}^n \to \mathbb{R}$ be a continuously differentiable function, bounded from below by $f^\star>-\infty$, with a gradient that is Lipschitz-continuous in the norm $\|\cdot\|_{\ast}$ with constant $L>0$. Let $\{x^k\}_{k\in\mathbb{N}}$ be the sequence of iterates generated by \nameref{alg:ACGD}. Then, for any $N \in\mathbb{N}^*$, 
\begin{equation*}
\min\limits_{0\leq k \leq N}\|\nabla f(x^k)\|_{\lmo\ast}^2 \leq \frac{2\beta (L+\delta)\zeta^2(f(x^0)-f^\star)}{N+1},
\end{equation*}
where $\zeta\in]0,+\infty[$ is the norm-equivalence constant such that, for all $x\in\mathbb{R}^n$, $\|x\|\leq \zeta\|x\|_{\lmo}$.
\end{theorem}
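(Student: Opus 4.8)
The plan is to telescope the per-iteration descent from Lemma~\ref{lem:unconstrained_local_descent} over $k=0,\dots,N$ and extract the minimum gradient norm. First I would apply Lemma~\ref{lem:unconstrained_local_descent}, which gives for each $k$
\begin{equation*}
\frac{1}{2L_k\|d^k\|^2}\|\nabla f(x^k)\|_{\lmo\ast}^2 \leq f(x^k)-f(x^{k+1}).
\end{equation*}
To get a clean bound I need to control the factor $L_k\|d^k\|^2$ from above by a constant independent of $k$. Since $\mathcal{C}=\mathcal{B}_{\lmo}(1)$, the direction $d^k=v^k$ lies on the unit-ball, so $\|d^k\|_{\lmo}=1$; using the norm-equivalence $\|x\|\leq\zeta\|x\|_{\lmo}$ I get $\|d^k\|\leq\zeta\|d^k\|_{\lmo}=\zeta$, hence $\|d^k\|^2\leq\zeta^2$. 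For the Lipschitz estimate, the remark following \nameref{alg:ACGD} already establishes that the backtracking terminates with $L_k\leq L+\delta$ before the final scaling by $\beta$; accounting for the possibility that the last backtracking multiplication by $\beta$ pushes the estimate just past $L+\delta$, I obtain the worst-case bound $L_k\leq\beta(L+\delta)$. Combining these two bounds yields $L_k\|d^k\|^2\leq\beta(L+\delta)\zeta^2$.

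Next I would substitute this uniform bound into the descent inequality to obtain, for every $k$,
\begin{equation*}
\frac{1}{2\beta(L+\delta)\zeta^2}\|\nabla f(x^k)\|_{\lmo\ast}^2 \leq f(x^k)-f(x^{k+1}).
\end{equation*}
Summing from $k=0$ to $N$, the right-hand side telescopes to $f(x^0)-f(x^{N+1})$, which is bounded above by $f(x^0)-f^\star$ since $f$ is bounded below by $f^\star$. This gives
\begin{equation*}
\sum_{k=0}^{N}\|\nabla f(x^k)\|_{\lmo\ast}^2 \leq 2\beta(L+\delta)\zeta^2\bigl(f(x^0)-f^\star\bigr).
\end{equation*}
Finally, since the minimum over $k$ of a quantity is at most its average, I bound the left-hand side below by $(N+1)\min_{0\leq k\leq N}\|\nabla f(x^k)\|_{\lmo\ast}^2$, and dividing through by $N+1$ delivers the claimed rate.

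The main obstacle is the careful justification of the uniform upper bound $L_k\leq\beta(L+\delta)$. The argument hinges on the backtracking loop: one must verify that the sufficient-decrease condition is guaranteed to hold once $L_k$ reaches $L+\delta$ (which follows from the standard descent lemma under $L$-smoothness, so no further backtracking step is taken), and then account for the single possible overshoot by a factor $\beta$ from the last multiplication. One subtlety is that Step~4 scales $L_k$ down by $\gamma\leq 1$ before backtracking begins, which only helps since it lowers the starting estimate; the upper bound is controlled entirely by the backtracking exit condition and is unaffected by $\gamma$. The remaining steps—norm-equivalence for $\|d^k\|$, telescoping, and the min-versus-average step—are routine.
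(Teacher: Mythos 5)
Your proposal is correct and follows essentially the same route as the paper's proof: the descent inequality from Lemma~\ref{lem:unconstrained_local_descent}, the uniform bound $L_k\|d^k\|^2\leq\beta(L+\delta)\zeta^2$ via norm-equivalence and the backtracking exit condition, telescoping, and the min-versus-sum step. Your explicit justification of $L_k\leq\beta(L+\delta)$ (descent lemma guaranteeing termination plus one possible $\beta$-overshoot, with the $\gamma$-scaling only lowering the starting estimate) is in fact more detailed than the paper, which invokes this bound without elaboration.
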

\begin{proof}
Using the local descent property given by Lemma~\ref{lem:unconstrained_local_descent} for $f$, it holds for all $k\in\mathbb{N}$,
\begin{equation}
f(x^{k+1}) \leq f(x^k) -  \frac{1}{2 L_k\|d^k\|^2}\|\nabla f(x^k)\|_{\lmo\ast}^2.
\end{equation}
Letting $N\in\mathbb{N}^*$ and summing from $k=0$ to $N$ yields
\begin{equation*}
\begin{aligned}
    \sum\limits_{k=0}^{N}\|\nabla f(x^k)\|_{\lmo\ast}^2 &\leq \sum\limits_{k=0}^{N} 2 L_k \|d^k\|^2\left(f(x^{k})-f(x^{k+1})\right)\\
    & \leq 2 \beta (L+\delta)\zeta^2\sum\limits_{k=0}^{N}\left(f(x^{k})-f(x^{k+1})\right) \\
    & \leq 2 \beta (L+\delta)\zeta^2\left(f(x^0)-f^\star\right),
\end{aligned}
\end{equation*}
where $\zeta\in]0,+\infty[$ is the norm-equivalence constant such that, for all $x\in\mathbb{R}^n$, $\|x\|\leq \zeta \|x\|_{\lmo}$.
Lower bounding by taking the minimum over $k\in\{0,\ldots,N\}$ finally gives 
\begin{equation*}
    \min\limits_{0\leq k \leq N}\|\nabla f(x^k)\|_{\lmo\ast}^2 \leq \frac{2\beta (L+\delta)\zeta^2(f(x^0)-f^\star)}{N+1}.
\end{equation*}
\qed 
\end{proof}
\begin{remark}
    The previous theorem gives a convergence guarantee in terms of $\|\nabla f(x^k)\|_{\lmo\ast}$ where the norm $\|\cdot\|_{\lmo\ast}$ is the dual norm to $\|\cdot\|_{\lmo}$, the one used to compute $v^k$ via the LMO. This norm does \emph{not} have to be the same norm that is used in the backtracking procedure. In practice, we find that using the $\ell^2$ for the backtracking procedure works well even when the LMO is computed with respect to a different norm (e.g., $\ell^1$ or $\ell^\infty$).
\qed
\end{remark}

The next theorem guarantees a stronger convergence rate (to the optimal functional-value) for \nameref{alg:ACGD} when minimizing a quasar-convex function. It requires the additional hypothesis that the sub-level sets of $f$ are bounded. Note that a sub-level set for a continuous function is a closed set, so that the boundedness assumption ensures compactness of sub-level sets of $f$.

\begin{theorem}[Unconstrained Quasar-convex Convergence Rate]\label{thm:steepest_convex}
Let $\mathcal{C} = \mathcal{B}_{\lmo}(1)$ be a closed unit-ball for some norm $\|\cdot\|_{\lmo}$ and let $f: \mathbb{R}^n \to \mathbb{R}$ be a continuously differentiable function with a gradient that is Lipschitz-continuous in the norm $\|\cdot\|_{\ast}$ with constant $L>0$. Assume also that $f$ is $\eta$-quasar-convex and that the level set $\mathcal{S}:=\{x\in\mathbb{R}^n:f(x) \leq f(x^0)\}$ associated with the initialization $x^0$ is bounded and let $\{x^k\}_{k\in\mathbb{N}}$ the sequence of iterates generated by \nameref{alg:ACGD}. Then, for any $N \in\mathbb{N}^*$, 
\begin{equation}\label{CCC}
f(x^N)-f^\star \leq \frac{2 \beta(L+\delta)R^2}{(\frac{2 \beta(L+\delta)R^2}{f(x^0)-f^\star}) + N\eta^2}.
\end{equation}
\end{theorem}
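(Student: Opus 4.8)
The plan is to turn the per-iteration decrease from Lemma~\ref{lem:unconstrained_local_descent} into a scalar recursion on the functional-value gap $h_k := f(x^k) - f^\star$ and then solve that recursion explicitly. First I would record that the functional-values generated by \nameref{alg:ACGD} are non-increasing, so every iterate remains in the sub-level set $\mathcal{S} = \{x : f(x) \le f(x^0)\}$, which also contains $x^\star$ since $f(x^\star) = f^\star \le f(x^0)$. Boundedness of $\mathcal{S}$ then lets me fix $R := \sup_{x \in \mathcal{S}} \|x - x^\star\|_{\lmo} < +\infty$, so that $\|x^k - x^\star\|_{\lmo} \le R$ for all $k$.

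Next I would exploit $\eta$-quasar-convexity to lower bound the dual-norm of the gradient in terms of the gap. Rearranging the quasar-convexity inequality evaluated at $y = x^k$ gives $\langle \nabla f(x^k), x^k - x^\star \rangle \ge \eta\, h_k$, and pairing this with the Fenchel--Young inequality for the dual pair $(\|\cdot\|_{\lmo}, \|\cdot\|_{\lmo\ast})$, namely $\langle \nabla f(x^k), x^k - x^\star \rangle \le \|\nabla f(x^k)\|_{\lmo\ast}\, \|x^k - x^\star\|_{\lmo} \le \|\nabla f(x^k)\|_{\lmo\ast}\, R$, yields $\|\nabla f(x^k)\|_{\lmo\ast} \ge \eta\, h_k / R$, and hence $\|\nabla f(x^k)\|_{\lmo\ast}^2 \ge \eta^2 h_k^2 / R^2$.

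Substituting this lower bound into Lemma~\ref{lem:unconstrained_local_descent}, together with the uniform bound $L_k \|d^k\|^2 \le \beta(L+\delta)$ (which follows, as in Theorem~\ref{thm:steepest_nonconvex}, from $L_k \le \beta(L+\delta)$ and $\|d^k\|_{\lmo} = 1$ when the backtracking norm coincides with the LMO norm; otherwise a norm-equivalence factor can be absorbed into $R$), produces the quadratic recursion
\begin{equation*}
h_{k+1} \le h_k - \frac{\eta^2}{2\beta(L+\delta)R^2}\, h_k^2 .
\end{equation*}
Writing $C := 2\beta(L+\delta)R^2$, this reads $h_{k+1} \le h_k - \eta^2 h_k^2 / C$.

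Finally I would solve the recursion by the standard reciprocal argument. If some $h_k = 0$ the claim is immediate; otherwise, dividing the recursion by $h_k h_{k+1} > 0$ and using monotonicity $h_{k+1} \le h_k$ gives $\frac{1}{h_{k+1}} - \frac{1}{h_k} \ge \frac{\eta^2}{C}\, \frac{h_k}{h_{k+1}} \ge \frac{\eta^2}{C}$. Telescoping from $0$ to $N-1$ then yields $\frac{1}{h_N} \ge \frac{1}{h_0} + \frac{N\eta^2}{C}$, which rearranges to $h_N \le \frac{C h_0}{C + N\eta^2 h_0}$, exactly the bound \eqref{CCC}. The main obstacle I anticipate is the bookkeeping around the constant multiplying $h_k^2$: one must verify that the accepted $L_k$ and the direction norm $\|d^k\|$ combine to the clean factor $\beta(L+\delta)$ in the statement, so that no stray norm-equivalence constant survives, and one must take care that the telescoping step legitimately invokes $h_{k+1} \le h_k$ to justify replacing $h_k / h_{k+1}$ by $1$.
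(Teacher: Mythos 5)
Your proposal is correct and follows essentially the same route as the paper's proof: the same per-iteration descent from Lemma~\ref{lem:unconstrained_local_descent}, the same quasar-convexity plus dual-norm bound $\|\nabla f(x^k)\|_{\lmo\ast} \ge \eta h_k/R$, and the same reciprocal/telescoping recursion on $1/h_k$. The only cosmetic difference is the bookkeeping you yourself flagged: the paper carries the norm-equivalence constant $\zeta$ (with $\|x\| \le \zeta\|x\|_{\lmo}$) explicitly through the recursion, ending with $2\beta(L+\delta)\zeta^2 R^2$, whereas you absorb it into $R$ — a legitimate simplification, since the statement's $R$ is only pinned down in the proof.
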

\begin{proof}
First we note that, for any $N \in\mathbb{N}^*$, if $f(x^N)=f^{\star}$ then inequality \eqref{CCC} clearly holds. So, we assume that $f(x^N)\neq f^{\star}$, which means that, for $k\in\{0,\ldots,N\}$, we have $h_k>0$.

From the descent property in Lemma~\ref{lem:unconstrained_local_descent} and the definition of the step-size $t_k$, it holds for all $k\in\{0,\ldots,N-1\}$,
\begin{equation*}
f(x^{k+1}) \leq f(x^k)-\frac{\|\nabla f(x^k)\|_{\lmo\ast}^2}{2 L_k \|d^k\|^2}.
\end{equation*}
Define $h_k:=f(x^k)-f^{\star} \geq 0$; subtracting $f^\star$ from both sides of the above inequality gives
\begin{equation*}
h_{k+1} \leq h_k-\frac{\|\nabla f(x^k)\|_{\lmo\ast}^2}{2 L_k\|d^k\|^2}.
\end{equation*}
Since each $x^k \in \mathcal{S}$ and $\mathcal{S}$ is bounded by assumption, define
\begin{equation*}
R:=\max\left\{\max_{ x \in \mathcal{S}} \|x-x^{\star}\|_{\lmo},1 \right\}<\infty.
\end{equation*}
By $\eta$-quasar-convexity of $f$, it follows that
\begin{equation*}
h_k \leq \frac{1}{\eta}\langle\nabla f(x^k), x^k-x^{\star}\rangle \leq\frac{1}{\eta}\|\nabla f(x^k)\|_{\lmo\ast}\|x^k-x^{\star}\|_{\lmo} \Rightarrow
\|\nabla f(x^k)\|_{\lmo\ast} \geq \frac{h_k}{\|x^k-x^{\star}\|_{\lmo}}
 \geq h_k\frac{\eta}{R},
\end{equation*}
since  $x^k \in \mathcal{S}$, i.e., $\left\|x^k-x^{\star}\right\|_{\lmo} \leq R$ for all $k\in\mathbb{N}$. It therefore follows that
\begin{equation*}
    h_{k+1} \leq h_k-\frac{\|\nabla f(x^k)\|_{\lmo\ast}^2}{2 L_k\|d^k\|^2}\leq h_k-\frac{1}{2 L_k\|d^k\|^2} \left(h_k\frac{\eta}{R}\right)^2
\leq h_k-\frac{h_k^2\eta^2}{2 \beta(L+\delta)\zeta^2R^2},
\end{equation*}
since $L_k \leq \beta (L + \delta)$ and $\zeta\in]0,+\infty[$ is the norm-equivalence constant such that, for all $x\in\mathbb{R}^n, \|x\|\leq \zeta\|x\|_{\lmo}$.
Manipulating the above we find
\begin{equation*}
\frac{h_{k+1}}{h_{k+1}h_k} \leq \frac{h_k - \frac{h_k^2\eta^2}{2\beta(L+\delta)\zeta^2R^2}}{h_{k+1}h_k}
\implies
\frac{1}{h_k} \leq \frac{1}{h_{k+1}} - \frac{h_k\eta^2}{2\beta(L+\delta)\zeta^2R^2 h_{k+1}}.
\end{equation*}
Since $h_{k+1} \leq h_k$, we know that $\frac{h_k}{h_{k+1}} \geq 1$ and thus
\begin{equation*}
-\frac{h_k\eta^2}{2\beta(L+\delta)\zeta^2R^2 h_{k+1}} \leq -\frac{\eta^2}{2\beta(L+\delta)\zeta^2R^2}.
\end{equation*}
Substituting this into our earlier equation, we find that
\begin{equation*}
\frac{1}{h_k} \leq \frac{1}{h_{k+1}} - \frac{\eta^2}{2\beta(L+\delta)\zeta^2R^2},
\end{equation*}
which we rewrite as
\begin{equation*}
\frac{1}{h_{k+1}} \geq \frac{1}{h_k} + \frac{\eta^2}{2\beta(L+\delta)\zeta^2R^2}.
\end{equation*}
Summing this telescoping inequality from $k = 0$ to $N-1$,
\begin{equation*}
\frac{1}{h_{N}} \geq \frac{1}{h_0} + \frac{N\eta^2}{2\beta(L+\delta)\zeta^2R^2}.
\end{equation*}
Inverting and simplifying gives
\begin{equation*}
h_{N} \leq \frac{2 \beta(L+\delta)\zeta^2R^2}{\left(\frac{2 \beta(L+\delta)\zeta^2R^2}{h_0}\right) + N\eta^2},
\end{equation*}
which completes the proof.
\qed 
\end{proof}

\begin{theorem}[Unconstrained Strongly Convex Convergence Rate]\label{thm:steepest_strongly_convex}
Let $\mathcal{C} = \mathcal{B}_{\lmo}(1)$ be a closed ball of radius $\rho>0$ for some norm $\|\cdot\|_{\lmo}$ and let $f: \mathbb{R}^n \to \mathbb{R}$ be continuously differentiable with a gradient that is Lipschitz-continuous in the norm $\|\cdot\|_{\ast}$ with constant $L>0$. Assume also that $f$ is $\mu$-strongly convex with respect to $\|\cdot\|_{\lmo}$ and let $\{x^k\}_{k\in\mathbb{N}}$ be the sequence of iterates generated by \nameref{alg:ACGD}. Then, for any $N \in \mathbb{N}^*$,
\begin{equation}
    f(x^N) - f^\star \leq \left(1 - \frac{\mu}{\beta (L + \delta)\zeta^2}\right)^N (f(x^0) - f^\star),
\end{equation}
where $\zeta\in]0,+\infty[$ is the norm-equivalence constant such that, for all $x\in\mathbb{R}^n, \|x\|\leq\zeta\|x\|_{\lmo}$.
\end{theorem}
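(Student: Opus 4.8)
The plan is to combine the per-iteration descent guarantee from Lemma~\ref{lem:unconstrained_local_descent} with $\mu$-strong convexity, which for unconstrained problems yields a Polyak--Łojasiewicz-type inequality relating the gradient norm to the functional-value gap. Concretely, I would first translate strong convexity with respect to $\|\cdot\|_{\lmo}$ into a lower bound on $\|\nabla f(x^k)\|_{\lmo\ast}^2$ in terms of $h_k := f(x^k) - f^\star$. The standard route is to use the inequality $f(y) \geq f(x) + \langle \nabla f(x), y - x\rangle + \tfrac{\mu}{2}\|y-x\|_{\lmo}^2$ for all $y$, minimize both sides over $y$, and obtain the PL-type bound $\tfrac{1}{2\mu}\|\nabla f(x^k)\|_{\lmo\ast}^2 \geq f(x^k) - f^\star = h_k$, i.e.\ $\|\nabla f(x^k)\|_{\lmo\ast}^2 \geq 2\mu h_k$. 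Here the dual norm $\|\cdot\|_{\lmo\ast}$ appears naturally because completing the square against $\|y-x\|_{\lmo}^2$ produces the Fenchel-type conjugate of the quadratic, which is governed by the dual norm.

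Next I would feed this PL inequality into Lemma~\ref{lem:unconstrained_local_descent}, which states
\begin{equation*}
f(x^{k+1}) \leq f(x^k) - \frac{1}{2 L_k \|d^k\|^2}\|\nabla f(x^k)\|_{\lmo\ast}^2.
\end{equation*}
Subtracting $f^\star$ from both sides and substituting the lower bound on the gradient norm gives $h_{k+1} \leq h_k - \tfrac{2\mu h_k}{2 L_k \|d^k\|^2} = h_k\bigl(1 - \tfrac{\mu}{L_k\|d^k\|^2}\bigr)$. To convert this into a clean contraction factor I need to control $L_k \|d^k\|^2$ uniformly. Since $\mathcal{C}$ is the unit-ball in $\|\cdot\|_{\lmo}$ in the Normalized Steepest Descent case, the direction $d^k = v^k$ satisfies $\|d^k\|_{\lmo} = 1$, and the norm-equivalence bound $\|x\| \leq \zeta\|x\|_{\lmo}$ yields $\|d^k\|^2 \leq \zeta^2$. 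Combined with the backtracking guarantee $L_k \leq \beta(L+\delta)$, this gives $L_k\|d^k\|^2 \leq \beta(L+\delta)\zeta^2$, hence the contraction factor $\bigl(1 - \tfrac{\mu}{\beta(L+\delta)\zeta^2}\bigr)$.

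Finally I would iterate the one-step contraction $h_{k+1} \leq \bigl(1 - \tfrac{\mu}{\beta(L+\delta)\zeta^2}\bigr) h_k$ from $k=0$ to $N-1$ to obtain $h_N \leq \bigl(1 - \tfrac{\mu}{\beta(L+\delta)\zeta^2}\bigr)^N h_0$, which is exactly the claimed rate. The main obstacle I anticipate is making the derivation of the PL-type inequality fully rigorous in the non-Euclidean norm $\|\cdot\|_{\lmo}$: the minimization step and the appearance of the dual norm must be handled carefully, and one should also confirm the contraction factor lies in $[0,1)$ so that the bound is meaningful, i.e.\ that $\mu \leq \beta(L+\delta)\zeta^2$, which follows from strong convexity being weaker than smoothness together with norm equivalence. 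A minor subtlety is that the theorem statement refers to $\mathcal{C}$ as a ball of radius $\rho$ rather than the unit-ball, so I would either note that normalization leaves the steepest-descent direction's normalized length unaffected or absorb $\rho$ into the constant $\zeta$; either way the contraction structure is unchanged.
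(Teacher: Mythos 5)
Your proposal is correct and follows essentially the same route as the paper's proof: combine the descent bound of Lemma~\ref{lem:unconstrained_local_descent} with the Polyak--\L ojasiewicz inequality $\|\nabla f(x^k)\|_{\lmo\ast}^2 \geq 2\mu h_k$ induced by strong convexity, bound $L_k\|d^k\|^2 \leq \beta(L+\delta)\zeta^2$ via the backtracking guarantee and norm equivalence, and iterate the resulting contraction. The only difference is cosmetic: the paper invokes the PL inequality directly, whereas you sketch its derivation via Fenchel conjugation, which is a fine (and slightly more self-contained) addition.
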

\begin{proof}
From the local descent property established in Lemma~\ref{lem:unconstrained_local_descent} for $f$ with the definition of the step-size $t_k$, we have, for all $k\in\mathbb{N}$,
\begin{equation}\label{eq:unconstrained_local_descent}
f(x^{k+1}) \leq f(x^k) - \frac{\|\nabla f(x^k)\|_{\lmo\ast}^2}{2 L_k\|d^k\|^2}.
\end{equation}
By the assumed strong convexity of $f$ with constant $\mu>0$, we have the Polyak-\L ojasiewicz inequality (or more precisely the Kurdyka-\L ojasiewicz inequality with exponent $1/2$)
\begin{equation*}
\|\nabla f(x^k)\|_{\lmo\ast}^2 \geq 2\mu (f(x^k) - f^\star).
\end{equation*}
Applying the above inequality to \eqref{eq:unconstrained_local_descent}, we find
\begin{equation*}
f(x^{k+1}) \leq f(x^k) - \frac{\mu (f(x^k)-f^\star)}{L_k\|d^k\|^2}.
\end{equation*}
Subtracting $f^\star$ from both sides and letting $h_k := f(x^k) - f^\star \geq 0$ we can rewrite to get
\begin{equation*}
h_{k+1} \leq h_k - \frac{\mu h_k}{L_k\|d^k\|^2} = \left(1 - \frac{\mu}{L_k\|d^k\|^2}\right)h_k.
\end{equation*}
From the backtracking procedure (c.f., \nameref{alg:adaptive-stepsize}), we know that $L_k \leq \beta (L + \delta)$ upon termination, i.e.,
\begin{equation*}
1 - \frac{\mu}{L_k\|d^k\|^2} \geq 1 - \frac{\mu}{\beta (L + \delta)\zeta^2},
\end{equation*}
where $\zeta\in]0,+\infty[$ is the norm-equivalence constant such that, for all $x\in\mathbb{R}^n, \|x\|\leq\zeta\|x\|_{\lmo}$. Thus,
\begin{equation*}
h_{k+1} \leq \left(1 - \frac{\mu}{\beta (L + \delta)\zeta^2}\right)h_k.
\end{equation*}
Applying this recursively from $k=0$ to $N-1$ yields
\begin{equation*}
h_{N} \leq \left(1 - \frac{\mu}{\beta (L + \delta)\zeta^2}\right)^{N} h_0,
\end{equation*}
which completes the proof.
\qed
\end{proof}


\subsection{Constrained Problems ($\mathcal{X} = \mathcal{C}$)}

In this subsection, we will always assume that $\mathcal{X} = \mathcal{C}$ is some non-empty compact convex set. This makes \eqref{P} a constrained problem that can be solved using the Conditional Gradient algorithm with our adaptive step-size, i.e., \nameref{alg:ACGD}. 

The next theorem establishes a convergence rate for the Frank-Wolfe gaps of \nameref{alg:ACGD} when minimizing a smooth function over a non-empty compact convex set to a stationary point in the general non-convex setting. The rate is given on $\mathrm{Gap}(x^k)$, which we recall is the Frank-Wolfe gap when $\mathcal{X}=\mathcal{C}$ and which, when bounded above by $\epsilon$, certifies $\epsilon$-criticality of $x^k$ for \eqref{P}. We make use of the fact that $\mathcal{C}$ is compact and thus admits a finite diameter $D = \max\limits_{x,y\in\mathcal{C}}\|x-y\|<+\infty$.
\begin{theorem}[Constrained Non-convex Convergence Rate]\label{thm:frank_nonconvex}
Let $\mathcal{C} \subset \mathbb{R}^n$ be a non-empty compact convex set and let $f: \mathbb{R}^n \to \mathbb{R}$ be continuously differentiable with $\{x^k\}_{k\in\mathbb{N}}$ the sequence of iterates generated by \nameref{alg:ACGD}. Then, for any $N\in\mathbb{N}^*$,
\begin{equation*}
\min\limits_{0 \leq k \leq N} \, \mathrm{Gap}(x^k) \leq \sqrt{\frac{2C(f(x^0)-f^\star)}{N+1}},
\end{equation*}
where $C := \left(\max\left\{\beta (L+\delta), 1, \max_{x\in \mathcal{C}}\|\nabla f(x)\| \right\} \cdot (\max\{D, 1\})^2\right)$.
\end{theorem}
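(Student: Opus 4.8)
The plan is to convert the local descent property \eqref{eq:local_descent} into a per-iteration decrease that is quadratic in $\mathrm{Gap}(x^k)$ and then telescope. Writing $g_k := \mathrm{Gap}(x^k) = -\langle \nabla f(x^k), d^k\rangle \geq 0$ and recalling that $t_{\max} = 1$ in the constrained case, the step-size returned by \nameref{alg:adaptive-stepsize} is $t_k = \min\{g_k/(L_k\|d^k\|^2),\,1\}$, so I would split the analysis into two cases according to whether the clip at $1$ is active.

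In the interior case $t_k = g_k/(L_k\|d^k\|^2) \leq 1$, substituting into \eqref{eq:local_descent} and simplifying (exactly as in Lemma~\ref{lem:unconstrained_local_descent}) gives $f(x^k) - f(x^{k+1}) \geq g_k^2/(2L_k\|d^k\|^2)$. Bounding $L_k \leq \beta(L+\delta)$ from the backtracking termination guarantee and $\|d^k\| = \|v^k - x^k\| \leq D$ since $v^k,x^k\in\mathcal{C}$, this becomes $f(x^k) - f(x^{k+1}) \geq g_k^2/(2C)$, using $C \geq \beta(L+\delta)(\max\{D,1\})^2$. In the boundary case $t_k = 1$, which occurs precisely when $g_k > L_k\|d^k\|^2$, substituting $t_k=1$ gives $f(x^k) - f(x^{k+1}) \geq g_k - \tfrac{L_k}{2}\|d^k\|^2 \geq g_k/2$.

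The crux is unifying these two estimates into the single quadratic bound $f(x^k) - f(x^{k+1}) \geq g_k^2/(2C)$. For the boundary case this requires $g_k \leq C$, which I would obtain from Cauchy--Schwarz together with the boundedness of the gradient over the compact set: $g_k = \langle \nabla f(x^k), x^k - v^k\rangle \leq \|\nabla f(x^k)\|\,\|x^k-v^k\| \leq (\max_{x\in\mathcal{C}}\|\nabla f(x)\|)\,D \leq C$, the last inequality holding because $C$ carries the factor $\max_{x\in\mathcal{C}}\|\nabla f(x)\|$ and $(\max\{D,1\})^2 \geq D$. This is exactly why $C$ is defined with those particular maxima: they are engineered so that $g_k/2 \geq g_k^2/(2C)$ in the clipped regime. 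I expect this reconciliation — turning the merely linear decrease of the clipped step into the quadratic form of the interior step via compactness of $\mathcal{C}$ — to be the main obstacle, and the presence of the ``$1$'' entries in the maxima defining $C$ is what makes it go through cleanly.

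With the uniform bound $f(x^k)-f(x^{k+1}) \geq g_k^2/(2C)$ established for all $k$, I would sum from $k=0$ to $N$, telescope the left-hand side to $f(x^0) - f(x^{N+1}) \leq f(x^0) - f^\star$ (using $f(x^{N+1}) \geq f^\star$), and lower-bound the right-hand side by $(N+1)\,\min_{0\leq k\leq N} g_k^2/(2C)$. Rearranging yields $\min_{0\leq k\leq N} g_k^2 \leq 2C(f(x^0)-f^\star)/(N+1)$, and taking square roots gives the claimed rate.
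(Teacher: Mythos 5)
Your proposal is correct and follows essentially the same route as the paper's proof: the same two-case split on whether the step-size clip $t_k=1$ is active, the same quadratic decrease $g_k^2/(2L_k\|d^k\|^2)$ in the interior case and linear decrease $g_k/2$ in the clipped case, the same use of $g_k \leq C$ (via Cauchy--Schwarz and compactness of $\mathcal{C}$) to unify the clipped case into the quadratic bound, and the same telescoping finish. The only difference is cosmetic: the paper first records the bound as a minimum of the two quantities and then argues the minimum is the quadratic term, whereas you unify the cases directly.
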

\begin{proof}
 Using the local descent property for $f$ given by \eqref{eq:local_descent} we have, for all $k\in\mathbb{N}$,
\begin{equation*}
f(x^{k+1}) \leq f(x^k) + t_k \langle \nabla f(x^k), d^k \rangle + \frac{L_k }{2} t_k^2\|d^k\|^2=f(x^k) - t_k \mathrm{Gap}(x^k) + \frac{L_k }{2} t_k^2\|d^k\|^2
\end{equation*}
which can be rearranged to get a bound on the gap,
\begin{equation}\label{eq:gapbound}
    t_k\mathrm{Gap}(x^k) \leq f(x^k)-f(x^{k+1}) + \frac{L_k}{2}t_k^2\|d^k\|^2
\end{equation}
By design, we have
$t_k= \min\left\{ -\frac{\langle \nabla f(x^k), d^k \rangle}{L_k \|d^k\|^2}, 1 \right\}= \min\left\{ \frac{\mathrm{Gap}(x^k)}{L_k \|d^k\|^2}, 1 \right\}$, which gives two cases that we examine separately.
\\
\textbf{Case one:}
 $t_k=\frac{\mathrm{Gap}(x^k)}{L_k \|d^k\|^2}\leq 1$. In this case we find 
Starting with \eqref{eq:gapbound} and using the definition of $t_k$ we find
 \begin{equation*}
     \frac{(\mathrm{Gap}(x^k))^2}{L_k \|d^k\|^2} \leq f(x^k) - f(x^{k+1})  + \frac{(\mathrm{Gap}(x^k))^2 }{2 L_k \|d^k\|^2}
 \end{equation*}
 which is equivalent to
 \begin{equation*}
     \frac{(\mathrm{Gap}(x^k))^2}{2L_k \|d^k\|^2} \leq f(x^k) - f(x^{k+1}).
 \end{equation*}
\textbf{Case two:}
 $t_k=1, \tfrac{\mathrm{Gap}(x^k)}{L_k \|d^k\|^2}\geq 1$. In this case, starting from \eqref{eq:gapbound} and using $t_k=1$ we find
\begin{equation*}
\mathrm{Gap}(x^k) \leq f(x^k) - f(x^{k+1})  + \frac{L_k }{2} \|d^k\|^2 \leq f(x^k) - f(x^{k+1})  + \frac{\mathrm{Gap}(x^k) }{2}.
\end{equation*}
Rearranging this yields
\begin{equation*}
 \frac{\mathrm{Gap}(x^k)}{2} \leq f(x^k) - f(x^{k+1}).
\end{equation*}
Combining both cases we have, for all $k\in\mathbb{N}$,
\begin{equation*}
\min\left\{\frac{\mathrm{Gap}(x^k)}{2}, \frac{(\mathrm{Gap}(x^k))^2}{2L_k \|d^k\|^2} \right\}\leq f(x^k) - f(x^{k+1}).
\end{equation*}
Set $C := \left(\max\left\{\beta (L+\delta), 1, \max_{x\in \mathcal{C}}\|\nabla f(x)\| \right\} \cdot (\max\{D, 1\})^2\right)>0$, then 
$$
\min\left\{\frac{\mathrm{Gap}(x^k)}{2}, \frac{(\mathrm{Gap}(x^k))^2}{2C} \right\}\leq f(x^k) - f(x^{k+1}).
$$
Now, we claim that $\min\left\{\frac{\mathrm{Gap}(x^k)}{2}, \frac{(\mathrm{Gap}(x^k))^2}{2C} \right\}=\frac{(\mathrm{Gap}(x^k))^2}{2C} $, i.e., that $\frac{\mathrm{Gap}(x^k)}{2}\geq \frac{(\mathrm{Gap}(x^k))^2}{2C}$ since
\begin{equation*}
\frac{\mathrm{Gap}(x^k)}{2}-\frac{(\mathrm{Gap}(x^k))^2}{2C}=\underbrace{\frac{\mathrm{Gap}(x^k)}{2C}}_{\geq0}\underbrace{(C-\mathrm{Gap}(x^k))}_{\geq 0}\geq 0,
\end{equation*}
and
\begin{equation*}
    0\leq \mathrm{Gap}(x^k)= -\langle\nabla f(x^k), d^k\rangle \leq \| \nabla f(x^k)\| \|d^k\|<C.
\end{equation*}
Thus, it holds for all $k\in\mathbb{N}$ that
\begin{equation*}
\frac{(\mathrm{Gap}(x^k))^2}{2C}\leq f(x^k) - f(x^{k+1}).
\end{equation*}
Letting $N\in\mathbb{N}^*$ and summing from $k=0$ to $N$ we find 
\begin{equation*}
\sum_{k=0}^{N} \frac{(\mathrm{Gap}(x^k))^2}{2C}\leq f(x^0) - f(x^{N+1})\leq f(x^0) - f^\star.
\end{equation*}
Finally, 
\begin{equation*}
\begin{aligned}
&(N+1)\frac{\left(\min\limits_{0\leq k\leq N}\mathrm{Gap}(x^k)\right)^2}{2C}\leq\sum_{k=0}^{N} \frac{(\mathrm{Gap}(x^k))^2}{2C}\leq f(x^0)-f^\star \\
\Rightarrow&
\left(\min\limits_{0\leq k\leq N}\mathrm{Gap}(x^k)\right)^2\leq \frac{2Ch_0}{N+1}
\\
\Rightarrow&
\min\limits_{0\leq k\leq N}\mathrm{Gap}(x^k)\leq \sqrt{\frac{2C(f(x^0)-f^\star)}{N+1}},
\end{aligned}
\end{equation*}
which is the desired claim.
\qed 
\end{proof}

We now examine the convergence rate of the functional-value gap when \nameref{alg:ACGD} is employed for constrained problems with $\eta$-quasar-convex objective functions.

\begin{theorem}[Constrained Quasar-convex Convergence Rate]\label{thm:local_convergence_rate}
Let $\mathcal{C}\subset\mathbb{R}^n$ be a non-empty compact convex set and let $f\colon\mathbb{R}^n\to\mathbb{R}$ be continuously differentiable and $\eta$-quasar-convex with $\{x^k\}_{k\in\mathbb{N}}$ the sequence of iterates generated by \nameref{alg:ACGD}.
Then, for any $N\in\mathbb{N}^*$,
\begin{equation*}
f(x^{N}) - f^\star \leq \frac{2C}{N\eta^2+(\frac{2C}{f(x^0)-f^\star})},
\end{equation*}
where $C := \max\left\{\beta (L+\delta), \max\limits_{x\in\mathcal{C}}\|\nabla f(x)\|, 1\right\} \cdot (\max\{D, 1\})^2$.
\end{theorem}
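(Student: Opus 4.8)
The plan is to combine the per-iteration progress bound already established for the constrained non-convex setting with the quasar-convexity hypothesis to obtain a quadratic recursion on the functional-value gap, then solve that recursion exactly as in the unconstrained quasar-convex case (Theorem~\ref{thm:steepest_convex}).

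First I would recall the core inequality extracted in the proof of Theorem~\ref{thm:frank_nonconvex}, namely that for all $k\in\mathbb{N}$,
\begin{equation*}
\frac{(\mathrm{Gap}(x^k))^2}{2C}\leq f(x^k)-f(x^{k+1}),
\end{equation*}
with $C := \max\{\beta(L+\delta),\max_{x\in\mathcal{C}}\|\nabla f(x)\|,1\}\cdot(\max\{D,1\})^2$. Writing $h_k := f(x^k)-f^\star$, this reads $(\mathrm{Gap}(x^k))^2/(2C)\leq h_k-h_{k+1}$. Since the derivation of this bound never invoked convexity, it carries over verbatim, so the constrained non-convex analysis supplies the descent half of the argument for free.

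The crucial new step is to lower-bound $\mathrm{Gap}(x^k)$ by $\eta\,h_k$, and here I would exploit the feasibility of a minimizer $x^\star\in\mathcal{C}$ together with the optimality of the LMO output $v^k = \lmo_{\mathcal{C}}(\nabla f(x^k))$. Because $v^k$ minimizes $\langle\nabla f(x^k),\cdot\rangle$ over $\mathcal{C}$ and $x^\star\in\mathcal{C}$, we have $\langle\nabla f(x^k),v^k\rangle\leq\langle\nabla f(x^k),x^\star\rangle$, hence
\begin{equation*}
\mathrm{Gap}(x^k)=\langle\nabla f(x^k),x^k-v^k\rangle\geq\langle\nabla f(x^k),x^k-x^\star\rangle\geq\eta\,h_k,
\end{equation*}
where the final inequality is exactly $\eta$-quasar-convexity applied at $y=x^k$. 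Substituting this into the progress bound yields the quadratic recursion $h_{k+1}\leq h_k-\eta^2 h_k^2/(2C)$.

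Finally I would solve this recursion by the standard telescoping argument used in Theorem~\ref{thm:steepest_convex}: assuming $h_k>0$ (the case $f(x^N)=f^\star$ is immediate), dividing by $h_k h_{k+1}$ and using $h_{k+1}\leq h_k$ gives $1/h_{k+1}\geq 1/h_k+\eta^2/(2C)$, which summed from $k=0$ to $N-1$ produces $1/h_N\geq 1/h_0+N\eta^2/(2C)$; inverting and recalling $h_0=f(x^0)-f^\star$ delivers the claimed bound. I expect the only genuinely delicate point to be the reduction $\mathrm{Gap}(x^k)\geq\eta\,h_k$, which is precisely where the constraint geometry (feasibility of $x^\star$ and optimality of the LMO) interacts with quasar-convexity; everything else is bookkeeping identical to the unconstrained setting.
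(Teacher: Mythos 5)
Your proposal is correct, and it reaches the paper's recursion $h_{k+1}\leq h_k-\eta^2h_k^2/(2C)$ by a more modular route than the paper does. The paper's own proof re-runs the two-case analysis on the step-size ($t_k<1$ versus $t_k=1$) from scratch, applying quasar-convexity \emph{inside} each case: case one yields $h_{k+1}\leq h_k-\eta^2h_k^2/(2C)$ directly, case two yields a linear contraction, and the two are then reconciled by an extra max-comparison argument (using $\eta h_k\leq C$) to show the quadratic bound dominates. You instead factor the proof through the convexity-free inequality
\begin{equation*}
\frac{(\mathrm{Gap}(x^k))^2}{2C}\leq f(x^k)-f(x^{k+1}),
\end{equation*}
already established inside the proof of Theorem~\ref{thm:frank_nonconvex} (where the analogous two-case reconciliation, $\mathrm{Gap}(x^k)\leq C$, was carried out once and for all), and then invoke quasar-convexity exactly once via $\mathrm{Gap}(x^k)\geq\langle\nabla f(x^k),x^k-x^\star\rangle\geq\eta h_k$ — which is the same combination of LMO optimality and the quasar-convexity inequality the paper uses, just applied at the level of the gap rather than inside each step-size case. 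What your organization buys: no duplicated case analysis, no second max/min comparison, and it sidesteps the paper's inessential detour (the limit argument showing $h_k\to 0$) and its harmless algebra slip in case two (where $h_k-\tfrac{\eta h_k}{2}$ is written as $\tfrac{(1-\eta)h_k}{2}$ rather than $(1-\tfrac{\eta}{2})h_k$). What the paper's version buys: it is self-contained, relying only on the theorem statements' shared hypotheses rather than on an inequality internal to another proof — if you wanted your argument to stand formally, you would extract that gap inequality as a standalone lemma, since Theorem~\ref{thm:frank_nonconvex}'s \emph{statement} (a bound on $\min_k\mathrm{Gap}(x^k)$) is not strong enough to be cited directly; it is the per-iteration inequality from its proof that you need. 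The constants and the final telescoping are identical in both arguments.
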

\begin{proof}
Using the local descent property of $f$ from \eqref{eq:local_descent},
\begin{equation}\label{eq:constrained_local_descent}
f(x^{k+1}) \leq f(x^k) + t_k \langle \nabla f(x^k), d^k \rangle + \frac{L_k }{2} t_k^2\|d^k\|^2.
\end{equation}
Recall that the step-size is defined in this case to be
$t_k= \min\left\{ -\frac{\langle \nabla f(x^k), d^k \rangle}{L_k \|d^k\|^2}, 1 \right\}$, which gives two cases.
\\
\textbf{Case one:} $t_k=-\frac{\langle \nabla f(x^k), d^k \rangle}{L_k \|d^k\|^2}$. Substituting $t_k$ into \eqref{eq:constrained_local_descent}, we have:
\begin{equation*}
\begin{aligned}
   f(x^{k+1}) &\leq f(x^k) + \left(-\frac{\langle \nabla f(x^k), d^k \rangle}{L_k \|d^k\|^2}\right) \langle \nabla f(x^k), d^k \rangle + \frac{L_k}{2}  \left(-\frac{\langle \nabla f(x^k), d^k \rangle}{L_k \|d^k\|^2}\right)^2\|d^k\|^2\\
&
=f(x^k) -\frac{\langle \nabla f(x^k), d^k \rangle^2}{L_k \|d^k\|^2} + \frac{\langle \nabla f(x^k), d^k \rangle^2}{2L_k \|d^k\|^2}
\\
 &= f(x^k) - \frac{\langle \nabla f(x^k), d^k \rangle^2}{2L_k \|d^k\|^2}.
\end{aligned}
\end{equation*}
By $\eta$-quasar-convexity of $f$, we have
\begin{equation*}
f(x^\star)\geq f(x^k)+ \frac{1}{\eta}\langle \nabla f(x^k), x^\star-x^k \rangle \geq f(x^k)+\frac{1}{\eta}\langle \nabla f(x^k), d^k \rangle,
\end{equation*}
where $x^\star\in\argmin\limits_{x\in\C}f(x)$ and $\eta>0$. Thus, denoting $h_k:=f(x^k)-f^\star$, it follows $\langle \nabla f(x^k), d^k \rangle \leq -\eta h_k \leq 0$. Squaring this and multiplying by $-1$ gives
\begin{equation*}
-\langle \nabla f(x^k), d^k \rangle^2\leq -\eta^2h_k^2.
\end{equation*}
Therefore, using \eqref{eq:constrained_local_descent} and since $ \|d^k\| \leq D:= \max_{x,y \in \mathcal{C}} \|x - y\| $, it follows that
\begin{equation*}
h_{k+1} \leq h_k - \frac{\eta^2h_k^2}{2L_k D^2}\leq h_k - \frac{\eta^2h_k^2}{2C},
\end{equation*}
where $C := \left(\max\left\{\beta (L+\delta),\max_{x\in \mathcal{C}}\|\nabla f(x)\|, 1 \right\} \cdot (\max\{D, 1\})^2\right)>0$. 
\\
\textbf{Case two:} $t_k=1$, so  $  -\frac{\langle \nabla f(x^k), d^k \rangle}{L_k \|d^k\|^2}\geq 1 $. Substituting $t_k = 1$ into \eqref{eq:constrained_local_descent}, we have
\begin{equation*}
\begin{aligned}
f(x^{k+1}) &\leq f(x^k) +  \langle \nabla f(x^k), d^k \rangle + \frac{L_k}{2} \|d^k\|^2\\
&
\leq f(x^k) +  \langle \nabla f(x^k), d^k \rangle - \frac{1}{2}\langle \nabla f(x^k), d^k \rangle\\
&
=f(x^k) + \frac{1}{2}\langle \nabla f(x^k), d^k \rangle
\\
&
\leq f(x^k) - \frac{\eta h_k}{2},
\end{aligned}
\end{equation*}
so that, finally,
\begin{equation*}
h_{k+1}\leq h_k- \frac{\eta h_k}{2}= \frac{(1-\eta)h_k}{2}.
\end{equation*}
In both cases, we obtain
\begin{equation*}
h_{k+1} \leq \max\left\{ h_k - \frac{\eta^2h_k^2}{2C}, \frac{(1-\eta)h_k}{2} \right\}.
\end{equation*}
Also, 
$$\eta h_k\leq -\langle \nabla f(x^k), d^k \rangle \leq \|\nabla f(x^k)\| \|d^k\|
\leq  C < 2C.
$$
Now we show that $\max\left\{ h_k - \frac{\eta^2h_k^2}{2C}, \frac{(1-\eta) h_k}{2} \right\}=h_k - \frac{\eta^2 h_k^2}{2C}$, i.e., $h_k - \frac{\eta^2 h_k^2}{2C}\geq  \frac{(1-\eta) h_k}{2}$:
\begin{equation*}
h_k - \frac{\eta^2h_k^2}{2C}- \frac{(1-\eta) h_k}{2}=  \frac{(1+\eta)h_k}{2}- \frac{\eta^2h_k^2}{2C}=\underbrace{\frac{h_k}{2C}}_{\geq0}\underbrace{(C(1+\eta)-\eta^2h_k )}_{\geq 0}\geq 0.
\end{equation*}
Therefore  
\begin{equation*}
h_{k+1} \leq  h_k - \frac{\eta^2h_k^2}{2C}.
\end{equation*}

Since $h_k \geq 0$ for all $k\in\mathbb{N}$, the sequence $\{h_k\}_{k\in\mathbb{N}}$ is non-increasing and bounded below by 0.  Thus, $\{h_k\}_{k\in\mathbb{N}}$ converges to some limit $h^\star \geq 0$.  Taking the limit as $k \to \infty$ of the inequality above gives
\begin{equation*}
h^\star \leq h^\star - \frac{\eta^2(h^\star)^2}{2C},
\end{equation*}
which implies that $\frac{(h^\star)^2}{2C} \leq 0$. Since $C > 0$, this can only happen if $h^\star = 0$. Therefore, $\lim_{k\to\infty} h_k = 0$. Furthermore, for all $k\in\mathbb{N}$,
\begin{equation*}
\frac{h_{k+1}}{h_{k+1}h_k} \leq \frac{h_k - \frac{\eta^2h_k^2}{2C}}{h_{k+1}h_k}
\implies
\frac{1}{h_k} \leq \frac{1}{h_{k+1}} - \frac{\eta^2h_k}{2C h_{k+1}}.
\end{equation*}
Since $h_{k+1} \leq h_k$, we know that $\frac{h_k}{h_{k+1}} \geq 1$. Therefore,
\begin{equation*}
-\frac{\eta^2h_k}{2C h_{k+1}} \leq -\frac{\eta^2}{2C}.
\end{equation*}
Substituting this into our earlier equation, we get
\begin{equation*}
\frac{1}{h_k} \leq \frac{1}{h_{k+1}} - \frac{\eta^2}{2C}.
\end{equation*}
Adding $\frac{\eta^2}{2C}$ to both sides, we get:
\begin{equation*}
\frac{1}{h_{k+1}} \geq \frac{1}{h_k} + \frac{\eta^2}{2C}.
\end{equation*}
We can apply this repeatedly and unroll the recurrence to find
\begin{equation*}
\frac{1}{h_{k+1}} \geq \frac{1}{h_0} + \frac{(k+1)\eta^2}{2C}.
\end{equation*}
To get $h_{k+1}$ back, we take the reciprocal of both sides of the inequality and arrive at
\begin{equation*}
h_{k+1} \leq \frac{1}{\frac{1}{h_0} + \frac{(k+1)\eta^2}{2C}}
= \frac{1}{\frac{2C + (k+1)\eta^2h_0}{2C h_0}}=\frac{2C h_0}{2C + (k+1)\eta^2h_0}= \frac{2C}{\frac{2C}{h_0} + (k+1)\eta^2}.
\end{equation*}
\qed 
\end{proof}

The above theorem applies also to convex objective functions $f$ and establishes that the sequence of function values converges to the optimal value at a sub-linear rate. As with all of our convergence guarantees, while the global Lipschitz gradient condition is employed in the analysis, from a computational perspective, it is not necessary to know the value of the global Lipschitz constant for the gradient.


\section{Numerical Experiments}\label{sec:numerical}

We performed extensive numerical experiments to assess the efficacy of \nameref{alg:ACGD}, benchmarking its performance against some other step-size strategies for both constrained and unconstrained optimization problems. All experiments were conducted using Julia \cite{Julia} on the Google Colaboratory cloud-based computational platform (Google Colab) \cite{google}.

In what follows, we present these numerical results for \nameref{alg:ACGD}. To ensure a fair comparison, all methods were initialized from the same starting point and were run for a maximum of 3,000 iterations. The complete dataset and source code are publicly accessible using this \href{https://github.com/abbaskhademi/Adaptive-Step-Size}{link}.

\subsection{Constrained Problems}
To evaluate the impact of step-size selection, we compare several strategies across a range of problems: the adaptive step-size schemes proposed in this work (i.e., \nameref{alg:ACGD}), the pure backtracking step-size from \cite{pedregosa2020linearly}, the classical short-step rule, and the open-loop step-size schedule. This comparison aims to assess the practical efficiency and convergence behavior of our adaptive methods relative to established step-sizes as baselines.


Convergence was determined using a termination criterion of $10^{-5}$. We remind the reader that, for constrained problems, this criterion is chosen to be the functional-value gap when the optimal value is known; otherwise, the Frank-Wolfe gap is used.
\subsubsection{Lasso Problem}
%

We first consider a rewritten variant of the classical Lasso problem, which employs an $\ell^1$-norm constraint and is widely used for sparse statistical recovery \cite{tibshirani1996regression}. The problem is formulated as
\begin{equation}
\begin{array}{cl}
\min\limits_{x\in \mathbb{R}^n} & \|b - Ax\|_2^2 \\
\mathrm{s.t.} & \|x\|_1 \leq \tau
\end{array}
\end{equation}
where $A \in \mathbb{R}^{m \times n}$ is the measurement matrix, $b \in \mathbb{R}^m$ is the observed signal, and $\tau > 0$ controls the sparsity of the solution $x \in \mathbb{R}^n$.

We generated a random problem instance to comprehensively assess step-size methods with
 $m = 2,000$, $n = 10,000$, and $\tau = 10$.
For each instance, we generate a random ground truth solution $x^{\text{true}}$ as a sparse vector with $\tau$ non-zero entries sampled from a standard Gaussian distribution. The matrix $A$ is also random with Gaussian entries, and the signal $b$ was computed as $b := A x^{\text{true}}$, resulting in an optimal objective value of zero.
The Lipschitz constant of the gradient was computed as $L = 2\lambda_{\max}(A^\top A)$.

Table \ref{tab:Lasso Problem Results} presents a comparative analysis of the performance of various step-size strategies on this Lasso instance. The adaptive adjustable, adaptive constant, and pure backtracking methods all successfully converged to the optimal solution, satisfying the functional-value gap stopping criterion. Bold values in the table indicate the best performance in each column (e.g., lowest runtime, smallest objective value, or smallest gap).

\begin{table}[htpb!]
\centering
\caption{Performance comparison of step-size strategies on Lasso problem}
\label{tab:Lasso Problem Results}
\begin{tabular}{lcccc}
\toprule
Step-size strategy & Iterations & Time (s) & Objective value & Functional-value gap \\
\midrule
Adaptive constant & 331 & 19.23 & \textbf{0.0000} & \textbf{0.0000} \\
Adaptive adjustable & 343 & \textbf{17.15} & \textbf{0.0000} & \textbf{0.0000} \\
Pure backtracking & 407 & 24.22 & \textbf{0.0000} & \textbf{0.0000} \\
Short-step & 3000 & 128.84 & 0.0042 & 0.0042 \\
Open-loop & 3000 & 132.39 & 0.1025 & 0.1025 \\
\bottomrule
\end{tabular}
\end{table}

Among these, the adaptive adjustable method demonstrated superior efficiency, achieving convergence in 17.15 seconds over 343 iterations. The adaptive constant method followed closely, requiring 19.23 seconds and 331 iterations. The pure backtracking method, while convergent, was significantly slower, taking 24.22 seconds over 407 iterations. In contrast, both the short-step and open-loop strategies failed to converge within the limit of 3,000 iterations, exhibiting significant sub-optimality. Their final objective values were 0.0042 and 0.1025, respectively, indicating a clear performance gap compared to adaptive approaches.

%

\begin{figure}[htpb!]
\centering
\subfloat[Gaps in Log Scale]{%
  \includegraphics[width=0.3\linewidth]{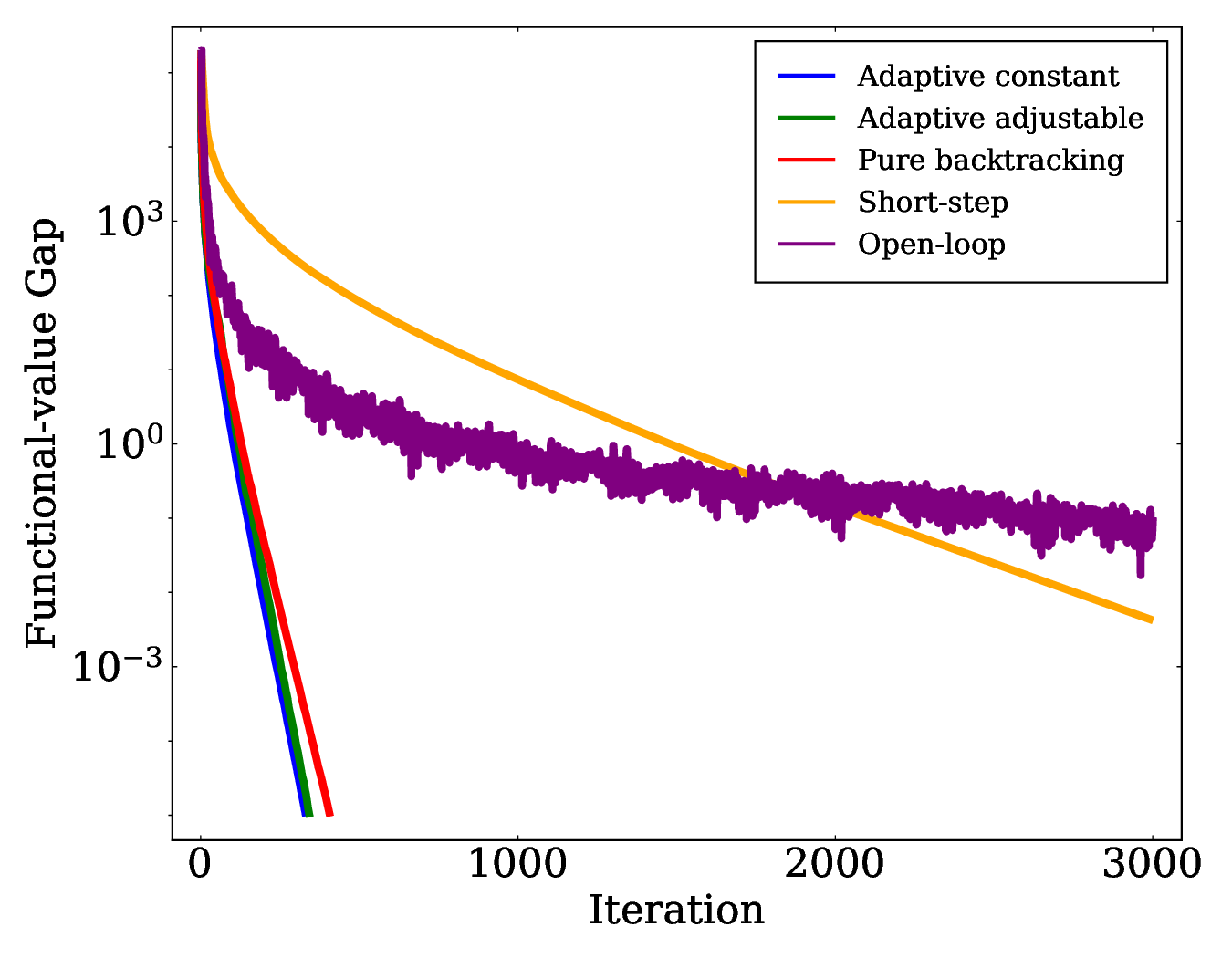}%
}
\hfill
\subfloat[Lipschitz Constant Evolution]{%
   \includegraphics[width=0.3\linewidth]{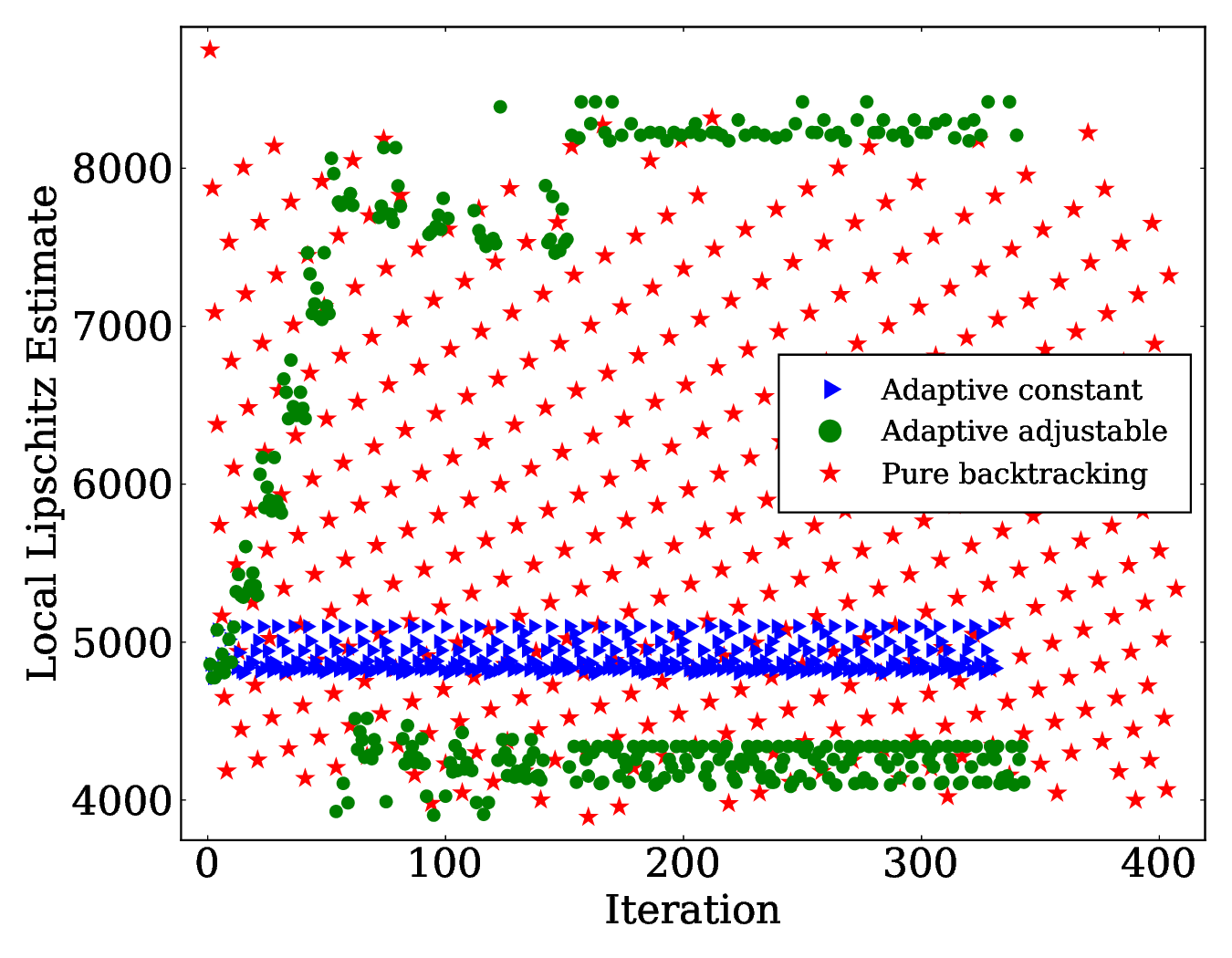}%
}
\hfill
\subfloat[Gaps over Time]{%
  \includegraphics[width=0.3\linewidth]{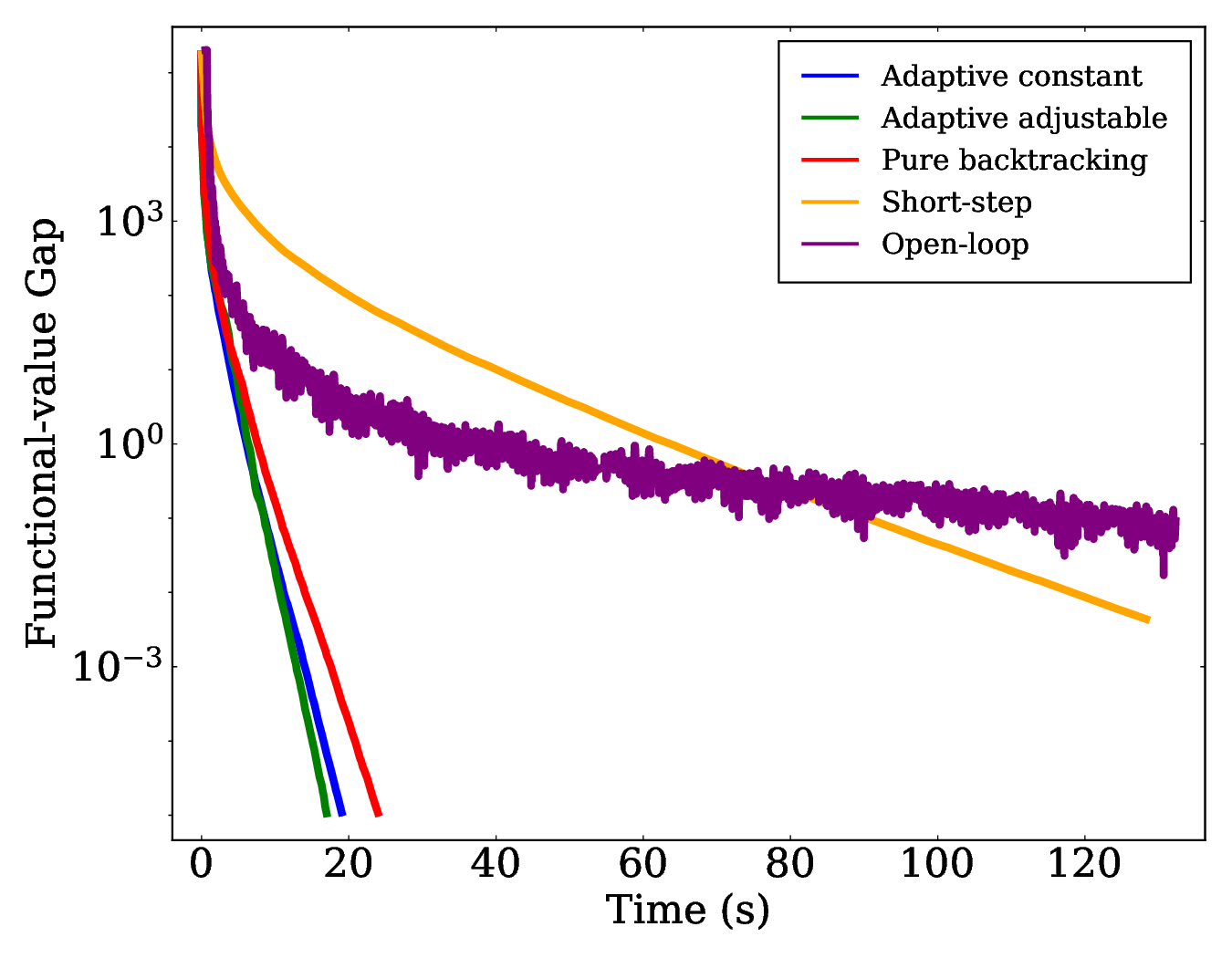}%
}
\caption{Convergence behavior and computational efficiency for Lasso problem}
\label{fig:lasso}
\end{figure}

Figure~\ref{fig:lasso} illustrates the convergence behavior of the step-size strategies. The open-loop and short-step methods exhibit the slowest convergence, as they fail to adapt to the local geometry of the problem. Among the adaptive methods, pure backtracking shows high variability in Lipschitz estimates, leading to inefficient progress. In contrast, the adaptive adjustable method achieves the fastest convergence by warm-starting the backtracking procedure, effectively balancing aggressiveness with stability.

As shown in Figure~\ref{fig:lasso}(b), the adaptive adjustable method (green dots) maintains Lipschitz estimates tightly concentrated in a low, effective range. In contrast, pure backtracking (red stars) exhibits wide fluctuations — ranging from below 4,000 to over 8,000 — resulting in either unstable or overly conservative steps. By consistently operating near the lower end of this spectrum, the adaptive adjustable method achieves faster and more stable convergence.

\subsubsection{Matrix Balancing Problem}
%

Matrix balancing is a fundamental problem in optimization with applications in network analysis and numerical linear algebra \cite{cohen2017matrix,kalantari1997complexity}. We consider the following constrained optimization problem
\begin{equation}
\begin{array}{cl}
\min\limits_{x \in \mathbb{R}^n} & \sum_{i,j=1}^n a_{ij} \exp(x_i - x_j) \\
\mathrm{s.t.} & x \in [a, b]^n
\end{array}
\end{equation}
where $A=[a_{ij}] \in \mathbb{R}^{n \times n}$ is a square matrix with non-negative entries, satisfying the balanced condition that the sum of each row equals the sum of its corresponding column \cite{cohen2017matrix}. While the gradient’s Lipschitz constant $L$ lacks a closed-form expression, a conservative upper bound can be derived over the feasible set $\mathcal{C}$.

We generate synthetic data with $n = 100$, $a = 1$, and $b = 10$. The matrix $A$ is constructed as a sparse, symmetric, and non-negative matrix. Specifically, we set a density factor $d = 5$ and initialize a random sparse matrix $A \in \mathbb{R}^{n \times n}$ using a sparsity level of $d/n$. To ensure symmetry and non-negativity, we compute $A = |A + A^\top| + 0.05 I_n$, where $I_n$ is the identity matrix, introducing a small diagonal shift to enhance numerical stability. 

Table~\ref{tab:MatrixProblem Results} compares step-size strategies. All adaptive methods (adaptive constant, adaptive adjustable, and pure backtracking) converge to the optimal value of 480.2835. The adaptive constant method is fastest (228 iterations, 75.44 s), followed by adaptive adjustable (291 iterations, 94.70 s). Pure backtracking requires 406 iterations (131.08 s). The open-loop method fails to converge within 3,000 iterations, yielding a primal gap of 0.0011. The short-step rule, reliant on a conservative Lipschitz bound, was prohibitively slow and is omitted.
\begin{table}[hpb!]
\centering
\caption{Performance comparison of step-size strategies on Matrix Balancing problem}
\label{tab:MatrixProblem Results}
{
\begin{tabular}{lccccccc}
\toprule
Step-size strategy & Iterations & Time (s) & Objective value & Functional-value gap
\\
\midrule
Adaptive constant & \textbf{228} & \textbf{75.44} & \textbf{480.2835} & \textbf{0.0000} 
\\
Adaptive adjustable & 291 & 94.70 & \textbf{480.2835} & \textbf{0.0000} 
\\
Pure backtracking & 406 & 131.08 & \textbf{480.2835} & \textbf{0.0000} %
\\
Open-loop & 3000 & 964.16 & 480.2846 & 0.0011 
\\
\bottomrule
\end{tabular}
}
\end{table}

Figure~\ref{fig:Matrix} illustrates convergence behavior. Adaptive methods maintain low, stable Lipschitz estimates $L_k$, enabling aggressive yet safe steps. In contrast, pure backtracking exhibits wider $L_k$ fluctuations, leading to slower progress. The open-loop method, lacking adaptivity, stagnates early.

\begin{figure}[htpb!]
\centering
\subfloat[Gaps in Log Scale]{%
   \includegraphics[width=0.3\linewidth]{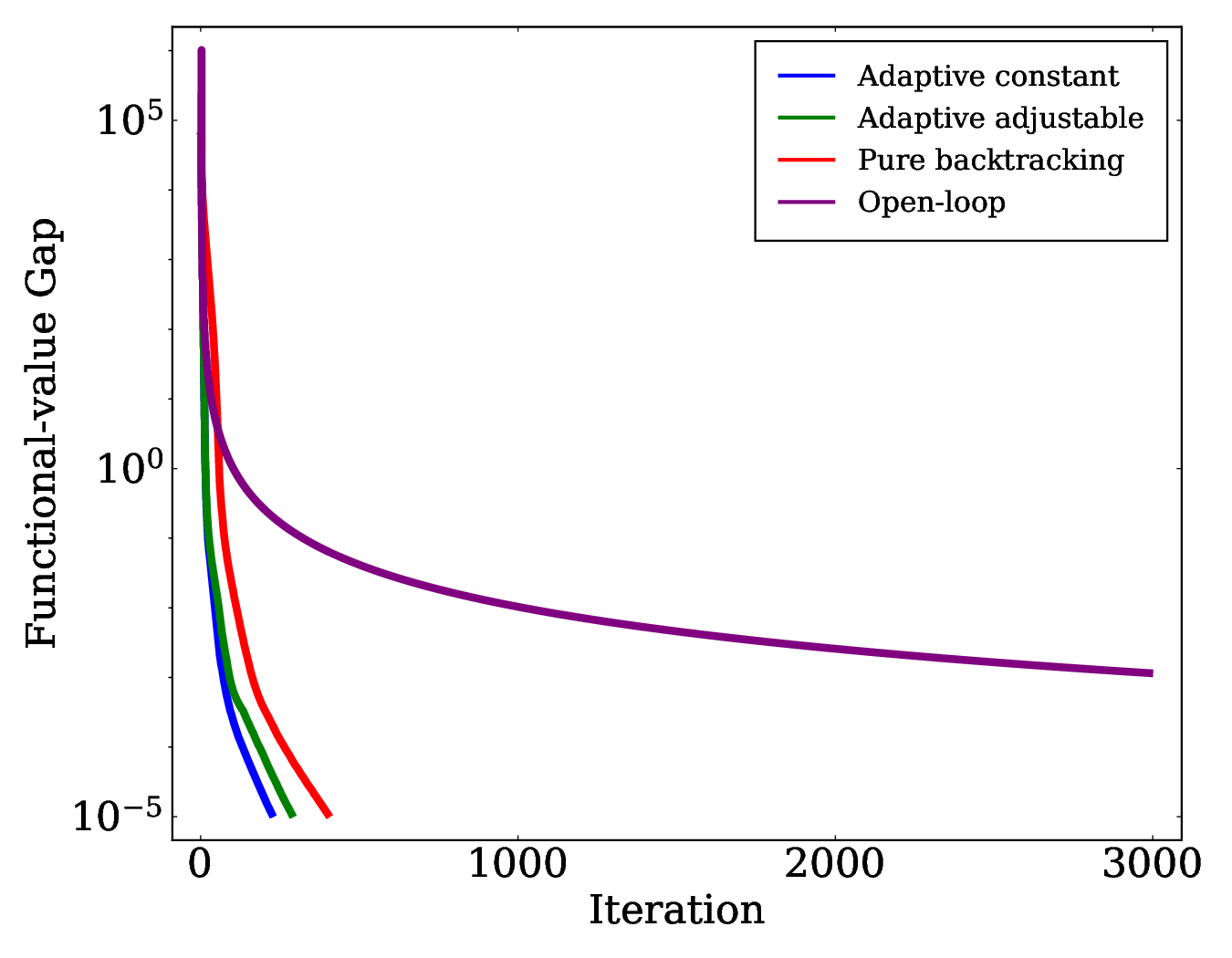}%
}
\hfill
\subfloat[Lipschitz Constant Evolution]{%
  \includegraphics[width=0.3\linewidth]{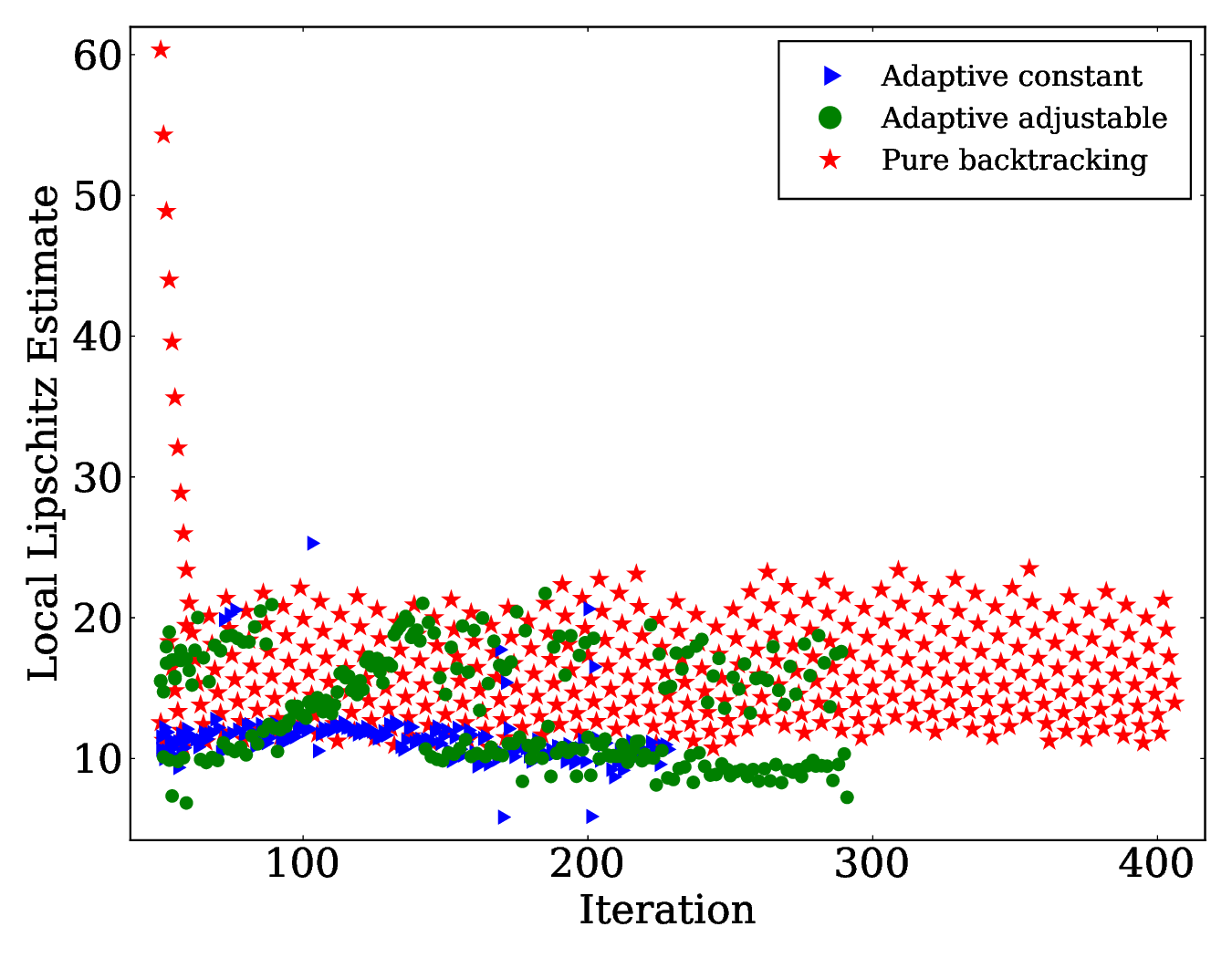}%
}
\hfill
\subfloat[Gaps over Time]{%
 \includegraphics[width=0.3\linewidth]{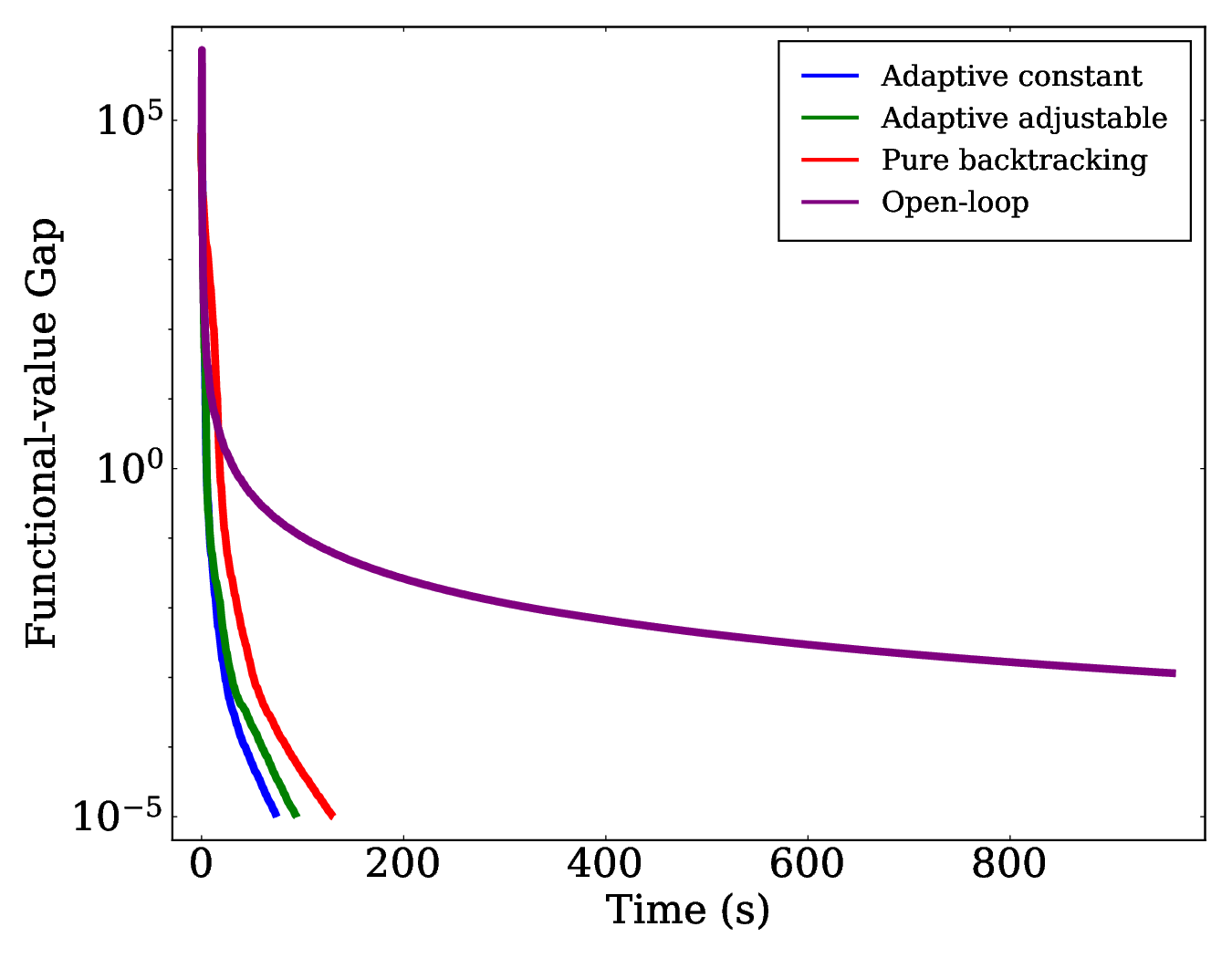}%
}
\caption{Convergence behavior and computational efficiency for Matrix Balancing problem}
\label{fig:Matrix}
\end{figure}

\subsubsection{Sparse Logistic Regression Problem}
%

Logistic regression is a cornerstone of binary classification in supervised learning \cite{lee2006efficient}. Let us consider the following logistic regression problem
\begin{equation}
\begin{array}{cl}
\min\limits_{x\in \mathbb{R}^n} & \dfrac{1}{m} \sum\limits_{i=1}^m \log \left(1 + \exp (-y_i x^\top a_i)\right) \\
\mathrm{s.t.} & \|x\|_1 \leq \tau
\end{array}
\end{equation}
where $A \in \mathbb{R}^{m \times n}$ is the feature matrix whose $i$-th row is $a_i \in \mathbb{R}^n$, $y \in \{-1,1\}^m$ is a vector whose entries are labels, and $\tau > 0$ controls solution sparsity. Here, The logistic loss function is $L$-smooth with 
$L = \|A\|_{\text{op}}^2 / (4m)$,
where $\|A\|_{\text{op}}$ denotes the operator norm (or spectral norm) of the matrix $A$, defined as the largest singular value of $A$.

We evaluate step-size strategies on LIBSVM \texttt{a1a} dataset \cite{chang2011libsvm} ($m=1605$ and $n=123$). All methods hit the 3,000-iteration limit and results are summarized in Table~\ref{tab:Logistic Regression a1a}.
Adaptive constant yields the best objective (0.3114) and smallest Frank-Wolfe gap (0.0086). Adaptive adjustable is fastest (1.35 s) with near-optimal accuracy (gap: 0.0112). Pure backtracking lags in both speed and accuracy. Non-adaptive methods underperform: short-step yields a poor objective (0.4522) and large gap (1.1077); open-loop is fastest (1.03 s) but plateaus at a suboptimal gap (0.0258).

\begin{table}[htpb!]
\centering
\caption{Performance comparison of step-size strategies on Logistic Regression problem}
\label{tab:Logistic Regression a1a}
{
\begin{tabular}{lccccccc}
\toprule
Step-size strategy & Iterations & Time & Objective value & Frank-Wolfe gap 
\\
\midrule
Adaptive constant & 3000 & 1.65 & \textbf{0.3114} & \textbf{0.0086} 
\\
Adaptive adjustable & 3000 & 1.35 & 0.3135 & 0.0112 
\\
Pure backtracking & 3000 & 1.73 & 0.3171 & 0.0190 
\\
Short-step & 3000 & 1.41 & 0.4522 & 1.1077 
\\
Open-loop & 3000 & \textbf{1.03} & 0.3176 & 0.0258 
\\
\bottomrule
\end{tabular}
}
\end{table}

Figure~\ref{fig:logistic_a1a_convergence} illustrates the convergence behavior across all methods. The left plot shows that adaptive strategies achieve significantly lower Frank-Wolfe gaps by the iteration limit, while non-adaptive methods stagnate at higher values. The center plot reveals that the adaptive adjustable method maintains stable and efficient local Lipschitz estimates, contributing to consistent progress. The right plot highlights the trade-off between speed and accuracy.

%
\begin{figure}[htpb!]
\centering
\subfloat[Gaps in Log Scale]{%
   \includegraphics[width=0.3\linewidth]{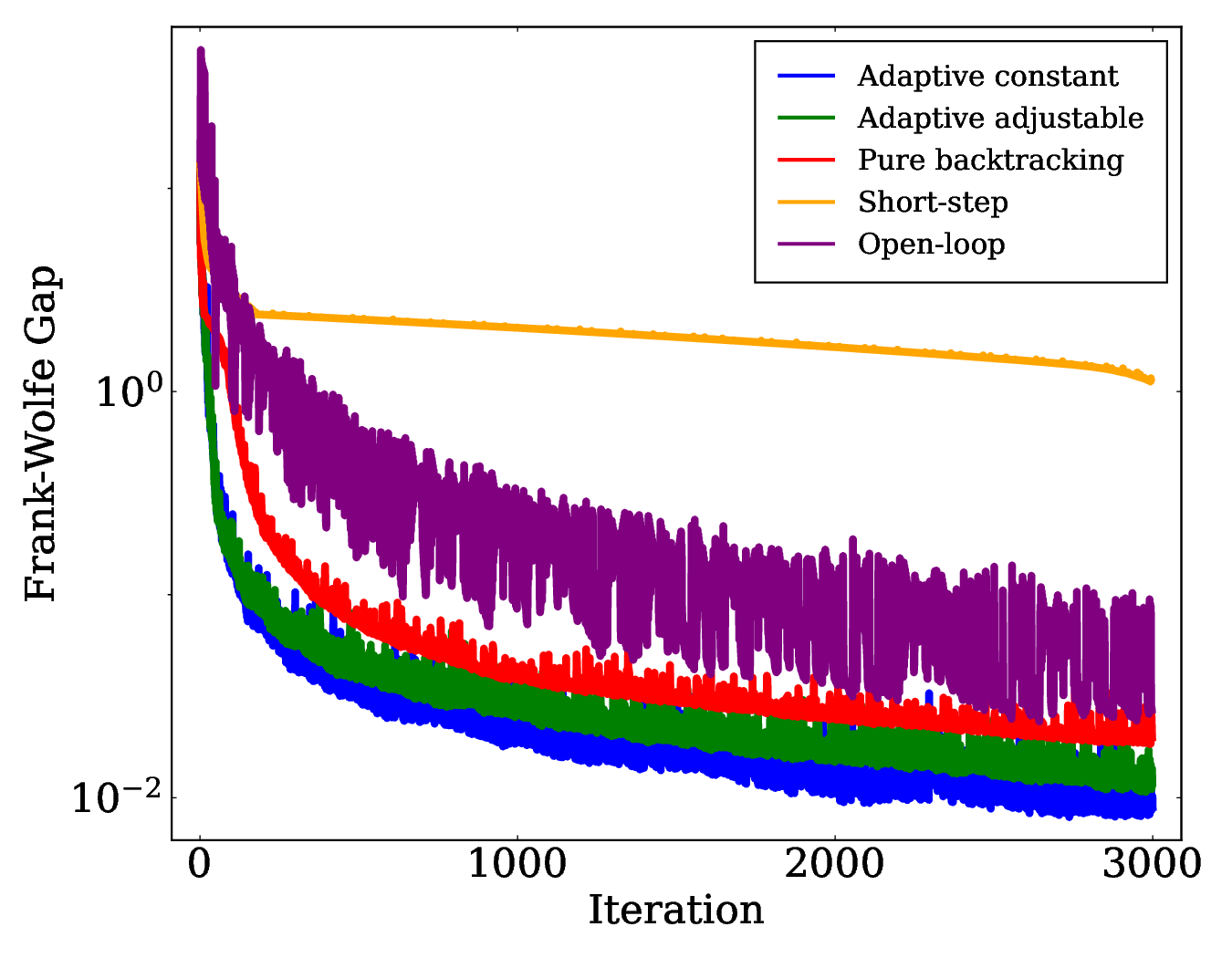}%
}\hfill
\subfloat[Lipschitz Constant Evolution]{%
    \includegraphics[width=0.3\linewidth]{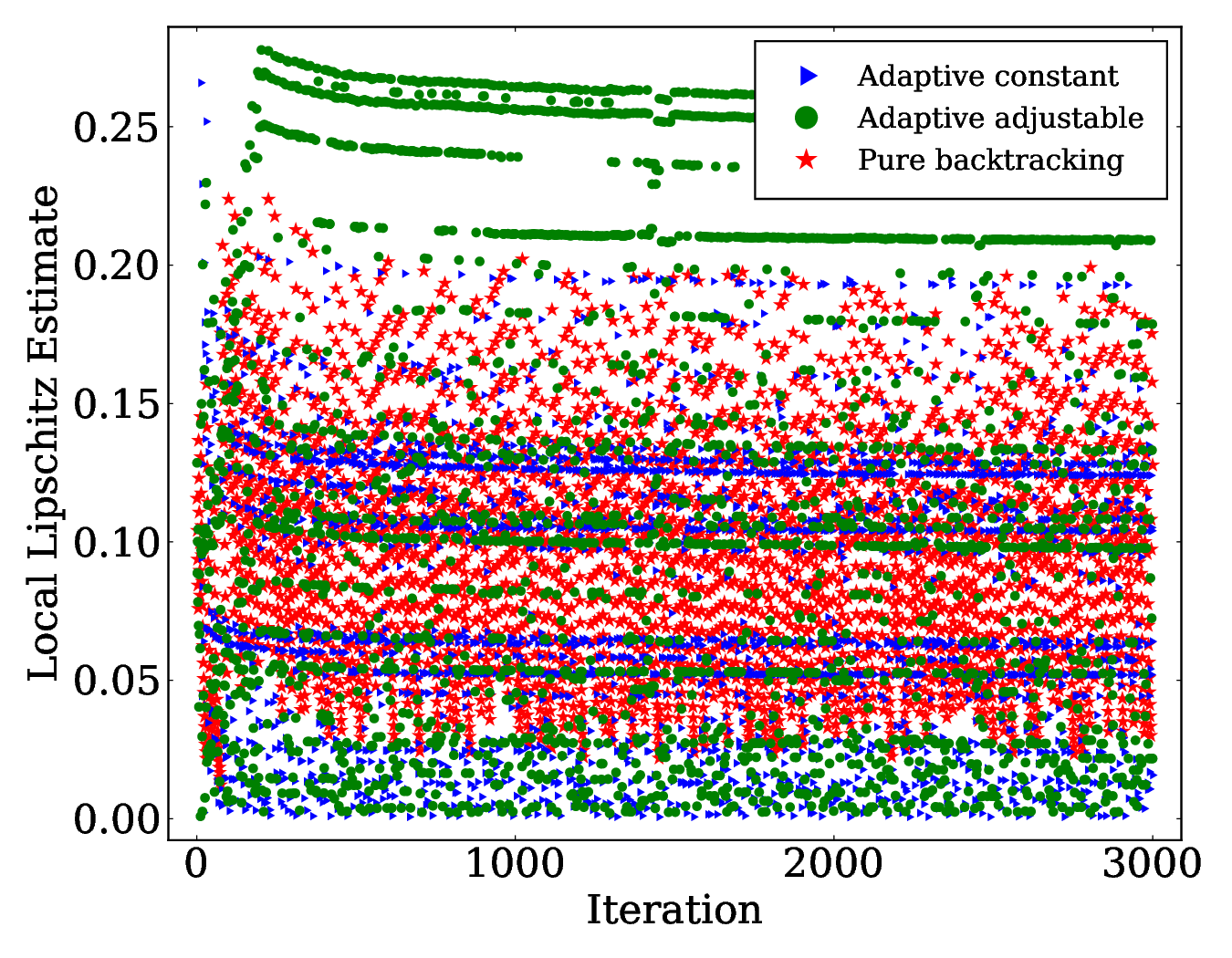}%
}\hfill
\subfloat[Gaps over Time]{%
   \includegraphics[width=0.3\linewidth]{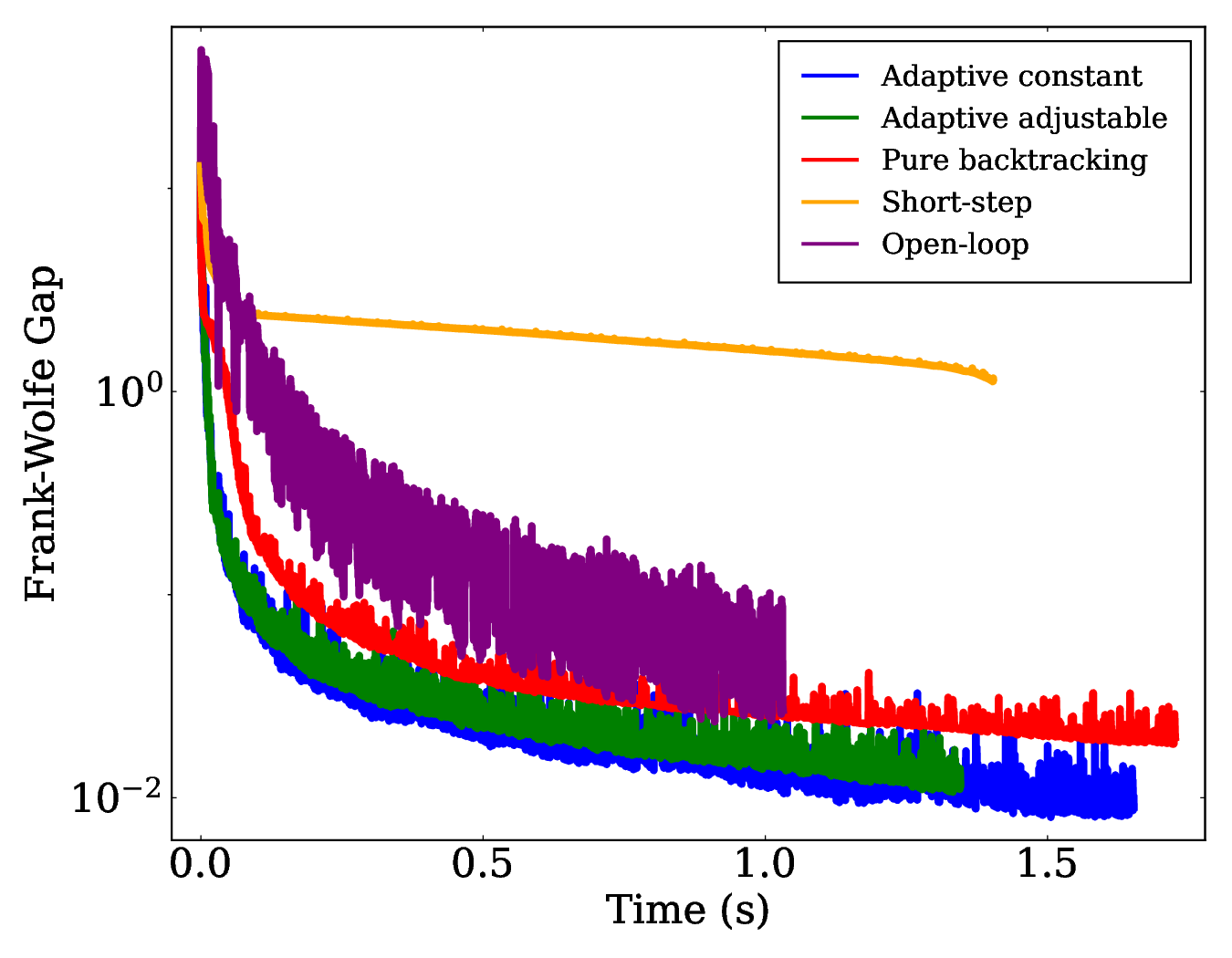}%
}
\caption{Convergence behavior and computational efficiency for Logistic Regression problem}
\label{fig:logistic_a1a_convergence}
\end{figure}

\subsubsection{Sparse Least Squares Sigmoid Regression Problem (non-convex)}

%
%

We now evaluate the step-size strategies on a non-convex optimization problem as follows 
\begin{equation}
\begin{array}{cl}
\min\limits_{x\in \mathbb{R}^n} & \dfrac{1}{m} \sum_{i=1}^m \left(y_i-\frac{1}{1+\exp \left(-x^{\top} a_i\right)}\right)^2 \\
\mathrm{s.t.} & \|x\|_1 \leq \tau,
\end{array}
\end{equation}
where the loss function is non-convex. The experiment is conducted on the LIBSVM \texttt{a1a} dataset \cite{chang2011libsvm}, which has $m=1,605$ samples and $n=123$ features. The regularization parameter is set to $\tau=50$.

The performance of the five step-size strategies is summarized in Table \ref{tab:Sigmoid_Regression_Problem} and visualized in Figure \ref{fig:Sigmoid_Regression_Problem}. Since the global optimum is unknown for this non-convex problem, we assess performance based on both the final objective value, the Frank-Wolfe gap as a measure of stationarity, and the total runtime.

\begin{table}[htpb!]
\centering
\caption{Performance comparison of step-size strategies on non-convex Sigmoid Regression problem}
\label{tab:Sigmoid_Regression_Problem}
{
\begin{tabular}{lcccc}
\midrule
Step-size strategy & Iterations & Time (s) & Objective value & Frank-Wolfe gap\\
\hline
Adaptive constant     & 3000 & 1.11 & \textbf{0.0989} & \textbf{0.0032} \\
Adaptive adjustable   & 3000 & \textbf{0.85} & 0.0995 & 0.0043   \\
Pure backtracking     & 3000 & 0.99 & 0.1007 & 0.0071  \\
Short-step            & 3000 & 0.69 & 0.1184 & 0.1418  \\
Open-loop             & 3000 & 0.72 & 0.1279 & 1.6103  \\
\bottomrule
\end{tabular}
}
\end{table}

The adaptive methods and pure backtracking demonstrate superior performance. The Adaptive constant strategy achieves the lowest final objective value and the smallest Frank-Wolfe gap, indicating the best approximately-stationary point. The Adaptive adjustable strategy is the most computationally efficient, exhibiting the lowest runtime while achieving a comparable solution quality. Pure backtracking also yields a competitive result, though it converges to a slightly higher Frank-Wolfe gap. As seen in Figure \ref{fig:Sigmoid_Regression_Problem}, these three methods effectively and rapidly reduce the Frank-Wolfe gap, both per iteration and over wall-clock time. In contrast, the non-adaptive strategies struggle. The Short-step method converges very slowly, resulting in a suboptimal solution with a high Frank-Wolfe gap. The open-loop strategy is particularly ill-suited for this problem; its Frank-Wolfe gap oscillates wildly and fails to decrease consistently, indicating a failure to converge.

Figure \ref{fig:Sigmoid_Regression_Problem}(b) offers insight into the dynamics of the adaptive methods, showing the evolution of their local Lipschitz constant estimates. The estimates fluctuate significantly throughout the optimization process, which is expected in a non-convex setting, yet they remain sufficiently bounded to guide the algorithms toward a solution. The relative stability of the adaptive methods highlights their robustness in complex optimization landscapes where a fixed step-size is inadequate.

\begin{figure}[htpb!]
\centering
\subfloat[Gaps in Log Scale]{%
   \includegraphics[width=0.3\linewidth]{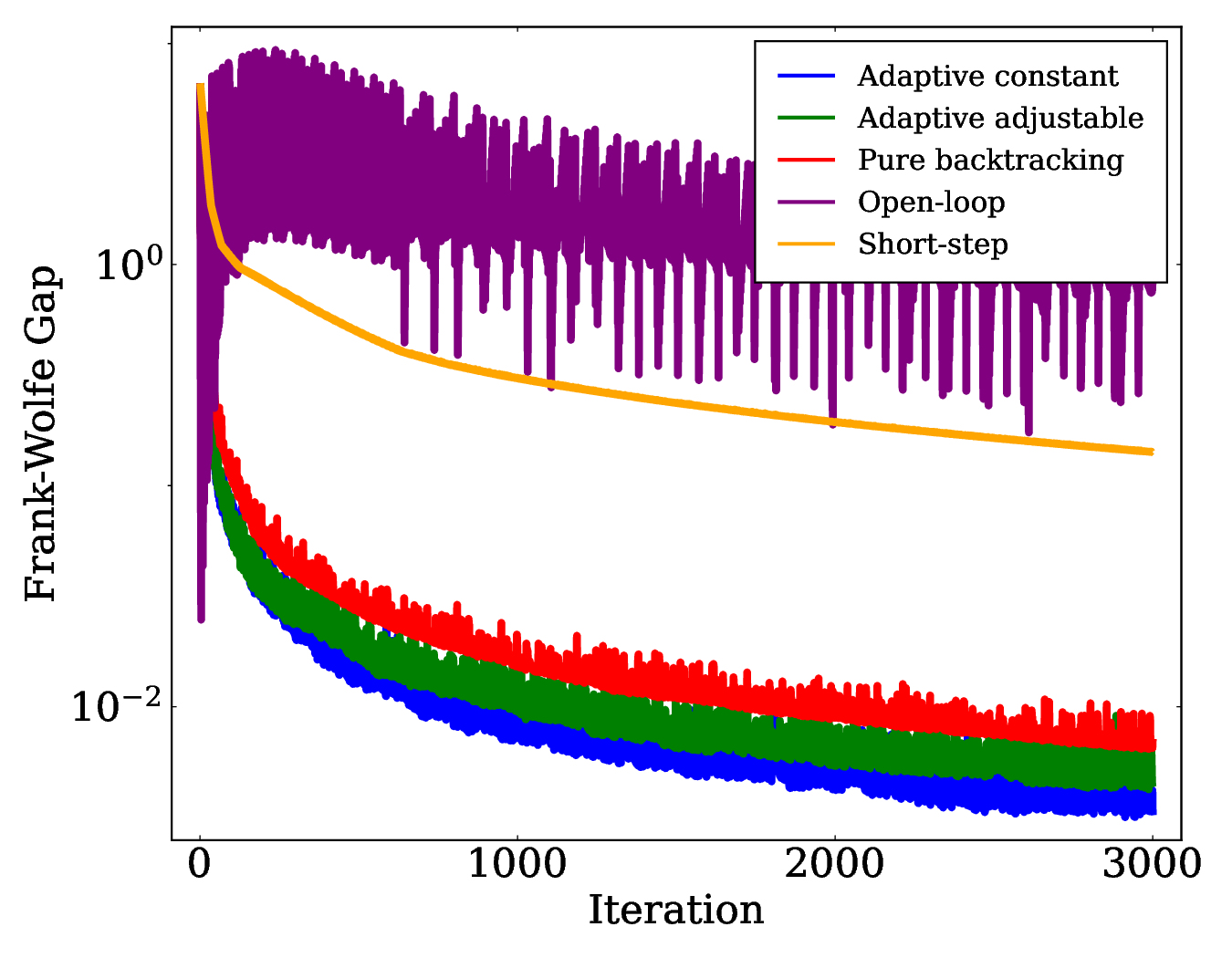}%
}\hfill
\subfloat[Lipschitz Constant Evolution]{%
    \includegraphics[width=0.3\linewidth]{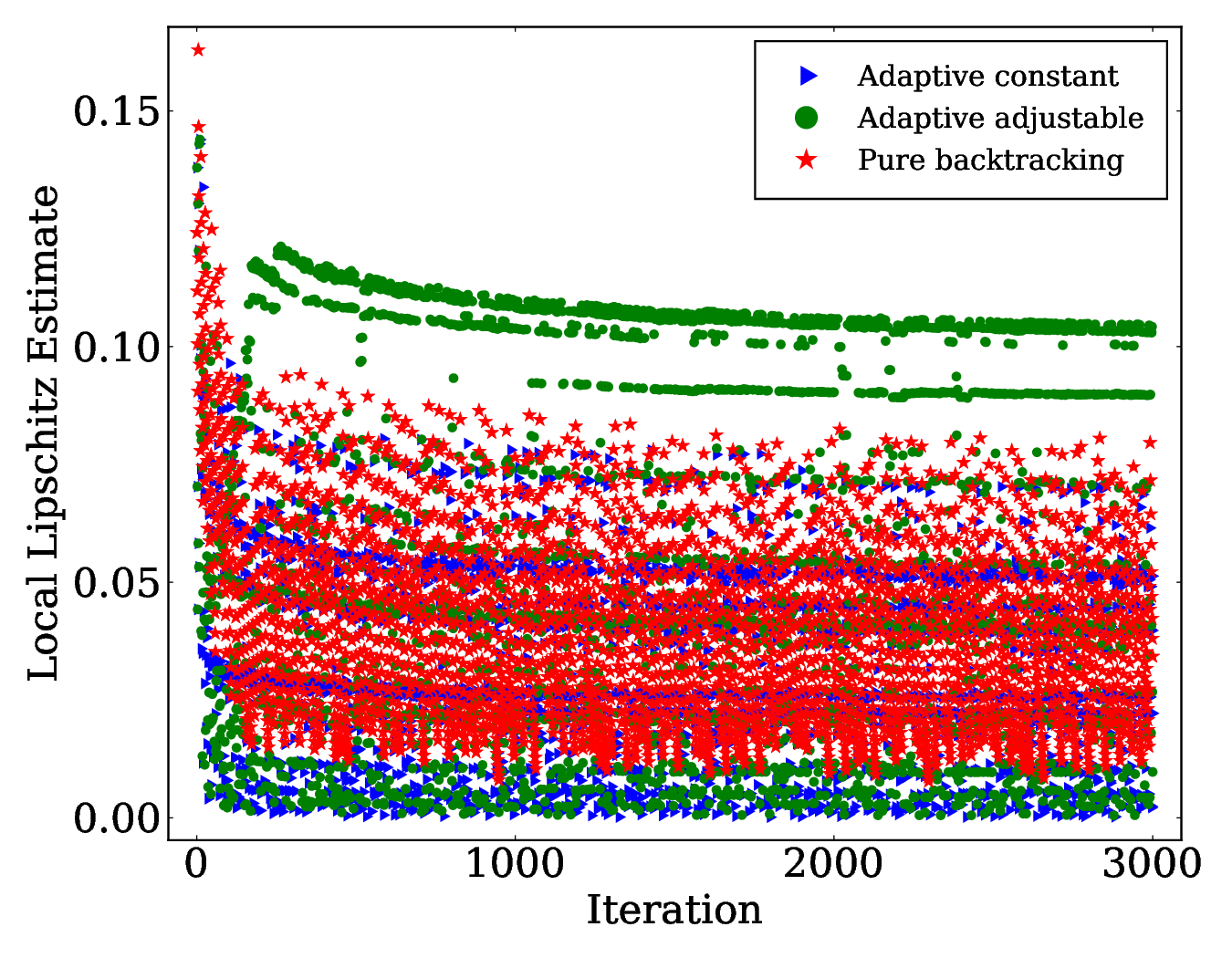}%
}\hfill
\subfloat[Gaps over Time]{%
    \includegraphics[width=0.3\linewidth]{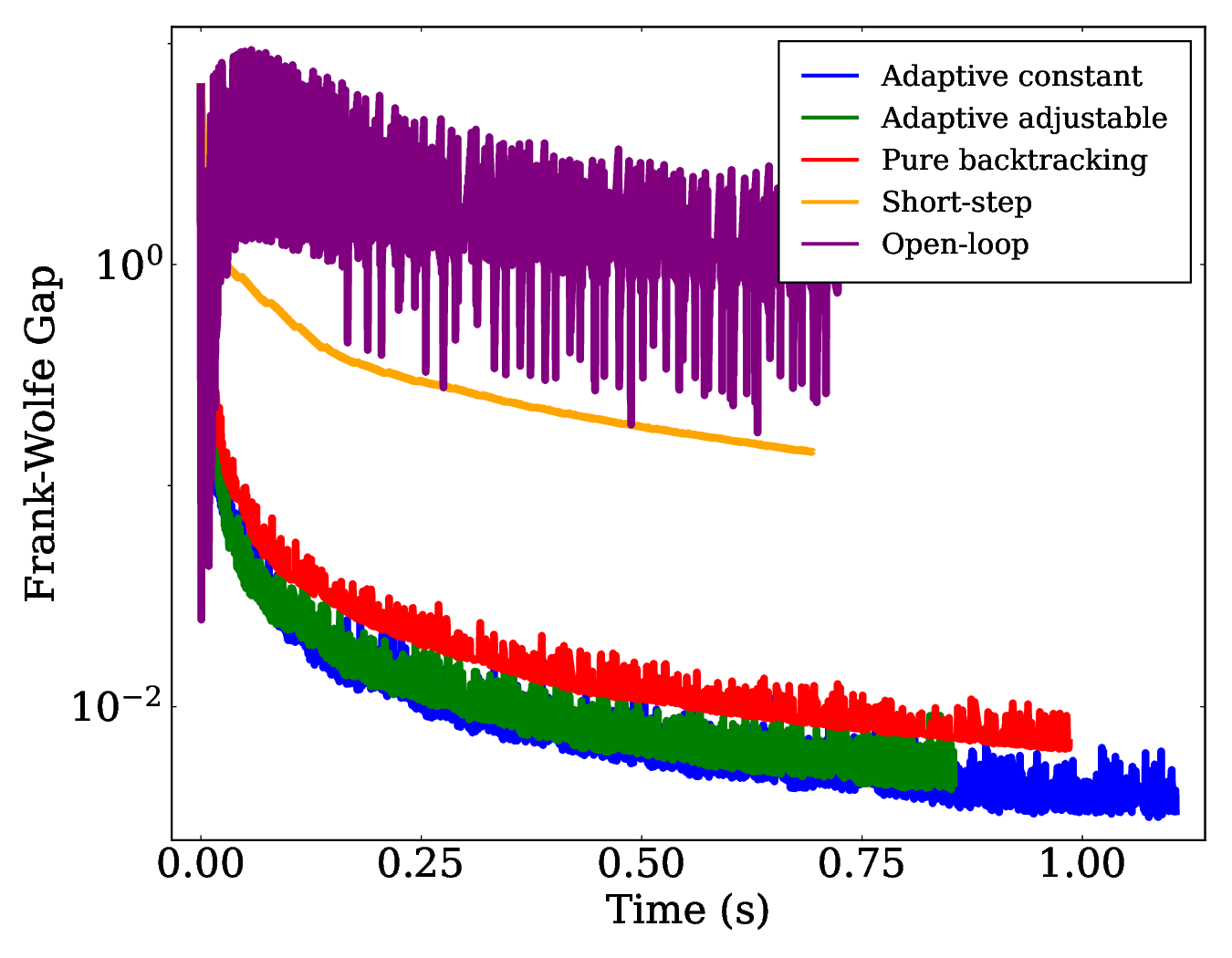}%
}
\caption{Convergence behavior and computational efficiency for non-convex Sigmoid Regression problem}
\label{fig:Sigmoid_Regression_Problem}
\end{figure}

\subsubsection{Video Co-Localization}
%

We investigate the video co-localization task using the aeroplane class from the YouTube-Objects dataset \cite{prest2012learning}, adopting the approach of  \cite{joulin2014efficient}. The aim is to detect and follow the aeroplane across video frames by computing bounding boxes.  Mathematically, this is expressed as the optimization problem
\begin{equation}\label{eq:VCL}\tag{VCL}
\begin{array}{cl}
\min\limits_{x \in \mathbb{R}^{660}} & \dfrac{1}{2} x^\top A x + b^\top x \\
\mathrm{s.t.} & x \in \mathcal{P}
\end{array}
\end{equation}
where $A \in \mathbb{R}^{660 \times 660}$ and $b \in \mathbb{R}^{660}$ are data-derived matrices, and $\mathcal{P}$ denotes a flow polytope encoding temporal consistency across frames. Note that, the LMO over $\mathcal{P}$ reduces to computing a shortest path in a corresponding directed acyclic graph.

Results are summarized in Table~\ref{tab:VideoCo-Localization}. The adaptive constant, adaptive adjustable, pure backtracking, and open-loop methods all converge to the optimal value with zero Frank-Wolfe gap. Only the short-step method exhibits slight sub-optimality. In terms of speed, the short-step method is fastest but sacrifices accuracy. Among fully convergent methods, adaptive adjustable is most efficient, followed by open-loop, pure backtracking, and adaptive constant.

\begin{table}[htpb!]
\centering
\caption{Performance comparison of step-size strategies on Video Co-Localization problem}
\label{tab:VideoCo-Localization}
{
\begin{tabular}{lcccc c c c}
\toprule
Step-size strategy & Iterations & Time  & Objective value & Frank-Wolfe gap\\
\midrule
Adaptive constant & 3000 & 1.66 & \textbf{0.0984} & \textbf{0.0000} \\
Adaptive adjustable & 3000 & 0.91 & \textbf{0.0984} & \textbf{0.0000} \\
Pure backtracking & 3000 & 1.16 & \textbf{0.0984} & \textbf{0.0000} \\
Short-step & 3000 & \textbf{0.80} & 0.0990& 0.0006 \\
Open-loop & 3000 & 1.02 & \textbf{0.0984} & \textbf{0.0000} \\
\bottomrule
\end{tabular}
}
\end{table}

Figure~\ref{fig:VideoCoLocalization} illustrates convergence behavior. The left plot confirms that all methods except short-step achieve near-zero duality gap, with adaptive methods converging fastest. The center plot shows that adaptive constant and adaptive adjustable maintain low, stable Lipschitz estimates ($L_k$), while pure backtracking exhibits higher variability. The right plot  highlights efficiency: adaptive adjustable and open-loop achieve the fastest initial progress, while short-step — though fastest overall — plateaus at a suboptimal level.

\begin{figure}[htpb!]
\centering
\subfloat[Gaps in Log Scale]{%
   \includegraphics[width=0.3\linewidth]{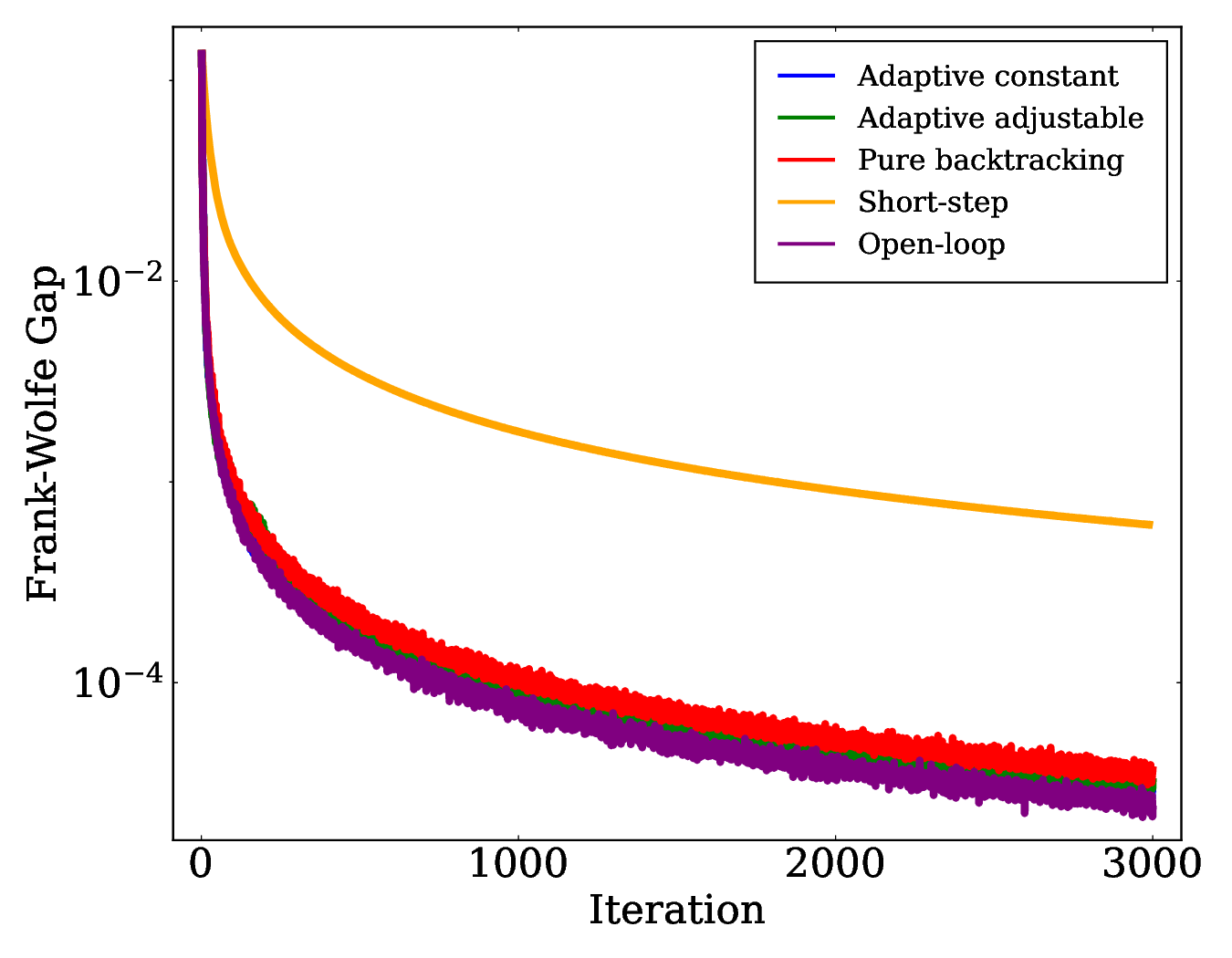}%
}\hfill
\subfloat[Lipschitz Constant Evolution]{%
  \includegraphics[width=0.3\linewidth]{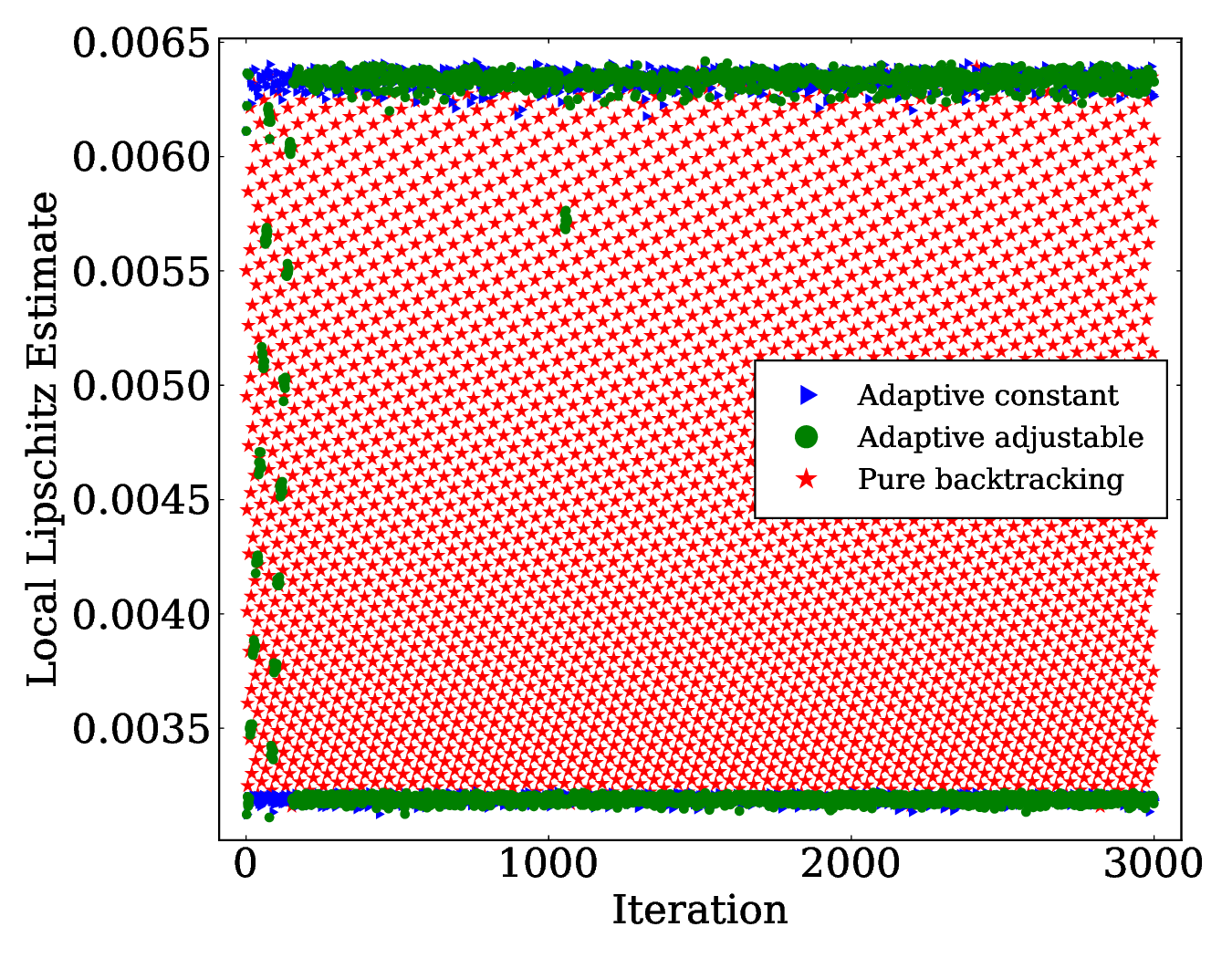}%
}\hfill
\subfloat[Gaps over Time]{%
   \includegraphics[width=0.3\linewidth]{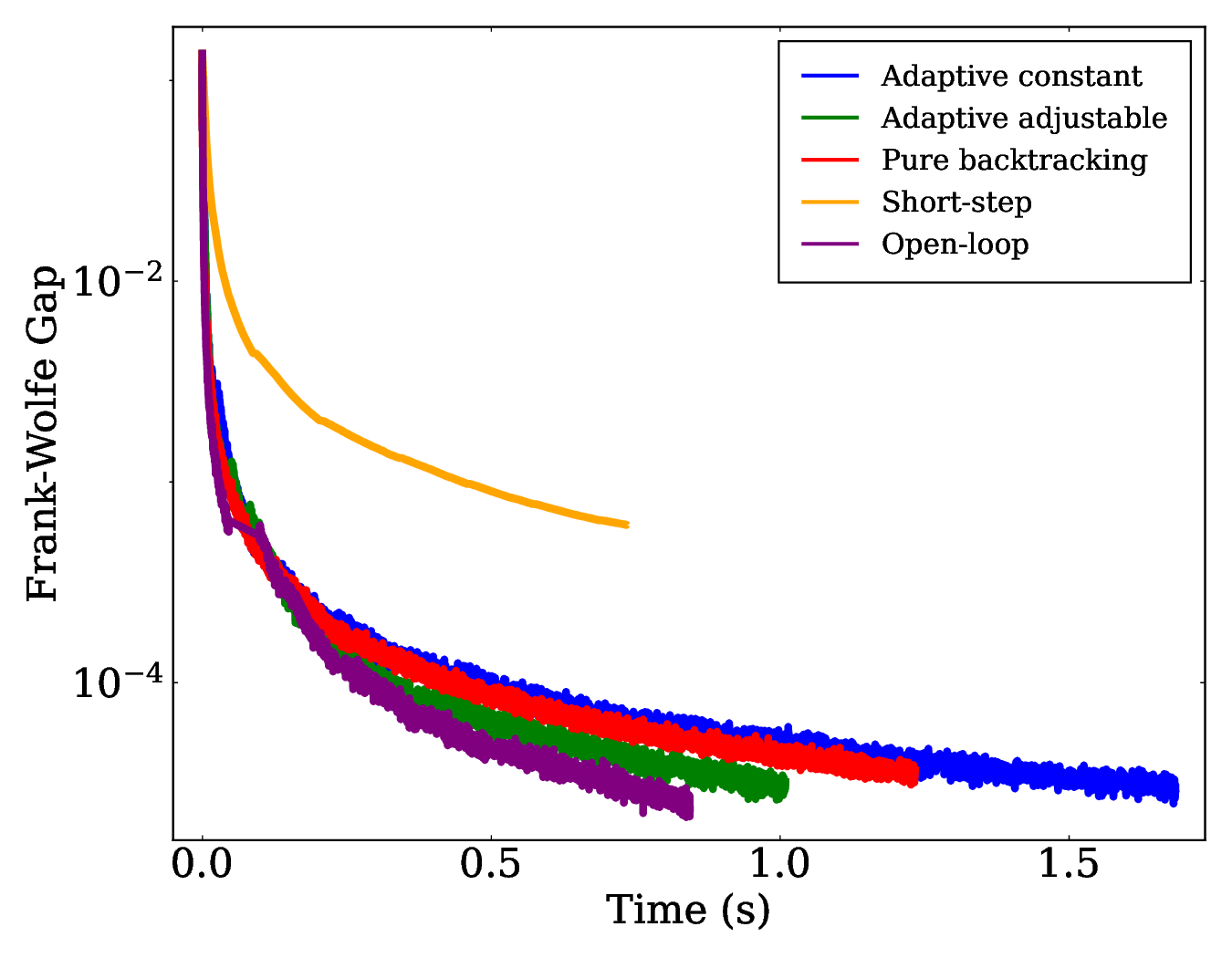}%
}
\caption{Convergence behavior and computational efficiency for Video Co-Localization problem}
\label{fig:VideoCoLocalization}
\end{figure}

\subsubsection{Collaborative Filtering}
%

Collaborative filtering lies at the heart of modern recommendation systems, leveraging patterns in user-item interactions to predict unobserved preferences. This task is formulated as the following constrained optimization problem
\begin{equation}\label{eq:Coll}
\begin{array}{cl}
\min\limits_{X \in \mathbb{R}^{m \times n}} & \dfrac{1}{|\mathcal{I}|} \sum\limits_{(i,j) \in \mathcal{I}} h_{\rho}(Y_{ij} - X_{ij}) \\
\mathrm{s.t.} & \|X\|_{\text{nuc}} \leq \tau
\end{array}
\end{equation}
where $Y \in \mathbb{R}^{m \times n}$ is the partially observed rating matrix, $\mathcal{I} \subset \{1,\dots,m\} \times \{1,\dots,n\}$ is the index set of observed entries, and $h_{\rho}$ is the Huber loss \cite{huber1992robust}:
\begin{equation}
h_{\rho}(\alpha) :=
\begin{cases} 
\frac{\alpha^2}{2}, & \text{if } |\alpha| \leq \rho \\
\rho (|\alpha| - \frac{\rho}{2}), & \text{if } |\alpha| > \rho
\end{cases}
\end{equation}
and $\|\cdot\|_{\text{nuc}}$ denotes the nuclear norm, given by
\begin{equation*}
\|X\|_{\text{nuc}} = \text{tr}\left(\sqrt{X^\top X}\right) = \sum_{i=1}^{\min\{m,n\}} \sigma_i(X)
\end{equation*}
where $\sigma_i(X)$ are the singular values of $X$.

We apply this formulation to the MovieLens 100K dataset \cite{harper2015movielens}. For this problem, the number of users is $m = 943$, the number of items is $n = 1,682$, and the number of observed entries is $|\mathcal{I}| = 10^5$. We set the parameters to $\rho = 1$ and $\tau = 5,000$.

Table~\ref{tab:CollaborativeFiltering} summarizes performance across step-size strategies. Open-loop remains the most efficient in practice, achieving the best objective value (0.1557) in the shortest time (1904.24s), despite not having the smallest Frank-Wolfe gap. Among adaptive methods, adaptive adjustable is the most accurate, attaining the smallest Frank-Wolfe gap (0.0134), and performs comparably to adaptive constant in objective value (0.1625 vs. 0.1622), while requiring less time. Pure backtracking is faster than both adaptive variants but yields a slightly worse objective and larger Frank-Wolfe gap. The short-step method lags significantly, producing a substantially worse objective and the largest Frank-Wolfe gap, indicating poor convergence. Figure~\ref{fig:CollaborativeFiltering} provides a visual illustration of the performance results and convergence behavior.

\begin{table}[htpb!]
\centering
\caption{Performance comparison of step-size strategies on Collaborative Filtering problem}
\label{tab:CollaborativeFiltering}
{
\begin{tabular}{lcccc}
\toprule
Step-size Strategy & Iterations & Time (s) & Objective Value & Frank-Wolfe gap\\
\midrule
Adaptive constant & 3000 & 2485.48 & 0.1622 & 0.0137 \\
Adaptive adjustable & 3000 & 2329.82 & 0.1625 & \textbf{0.0134} \\
Pure backtracking & 3000 & 2191.40 & 0.1638 & 0.0146 \\
Short-step & 3000 & 2250.36 & 0.1897 & 0.0422 \\
Open-loop & 3000 & \textbf{1904.24} & \textbf{0.1557} & 0.0155 \\
\bottomrule
\end{tabular}
}
\end{table}

It is important to note that, in problem~\eqref{eq:Coll} with the data we used, the objective function is smooth with gradient whose Lipschitz constant is $L = 1/|\mathcal{I}| = 10^{-5}$, which is an extremely small value. Step-size strategies that rely on the Lipschitz constant of the gradient become too large in this regime, whereas the open-loop rule $t_k = 2/(k+2)$ achieves faster and more effective convergence on this problem. This behavior is clearly reflected in Table~\ref{tab:CollaborativeFiltering} and Figure~\ref{fig:CollaborativeFiltering}.

\begin{figure}[htpb!]
\centering
\subfloat[Gaps in Log Scale]{
 \includegraphics[width=0.3\linewidth]{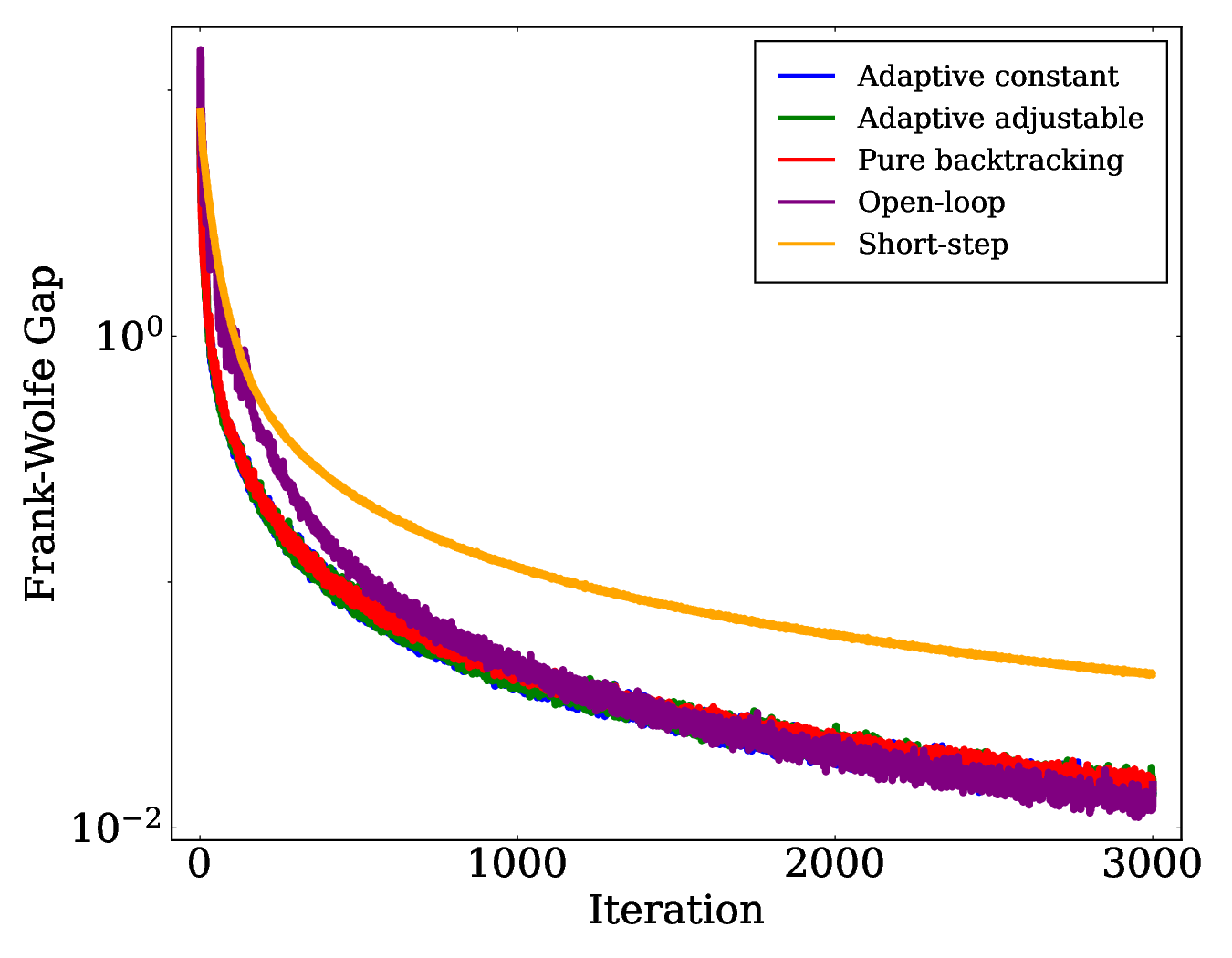}
}\hfill
\subfloat[Lipschitz Constant Evolution]{
 \includegraphics[width=0.3\linewidth]{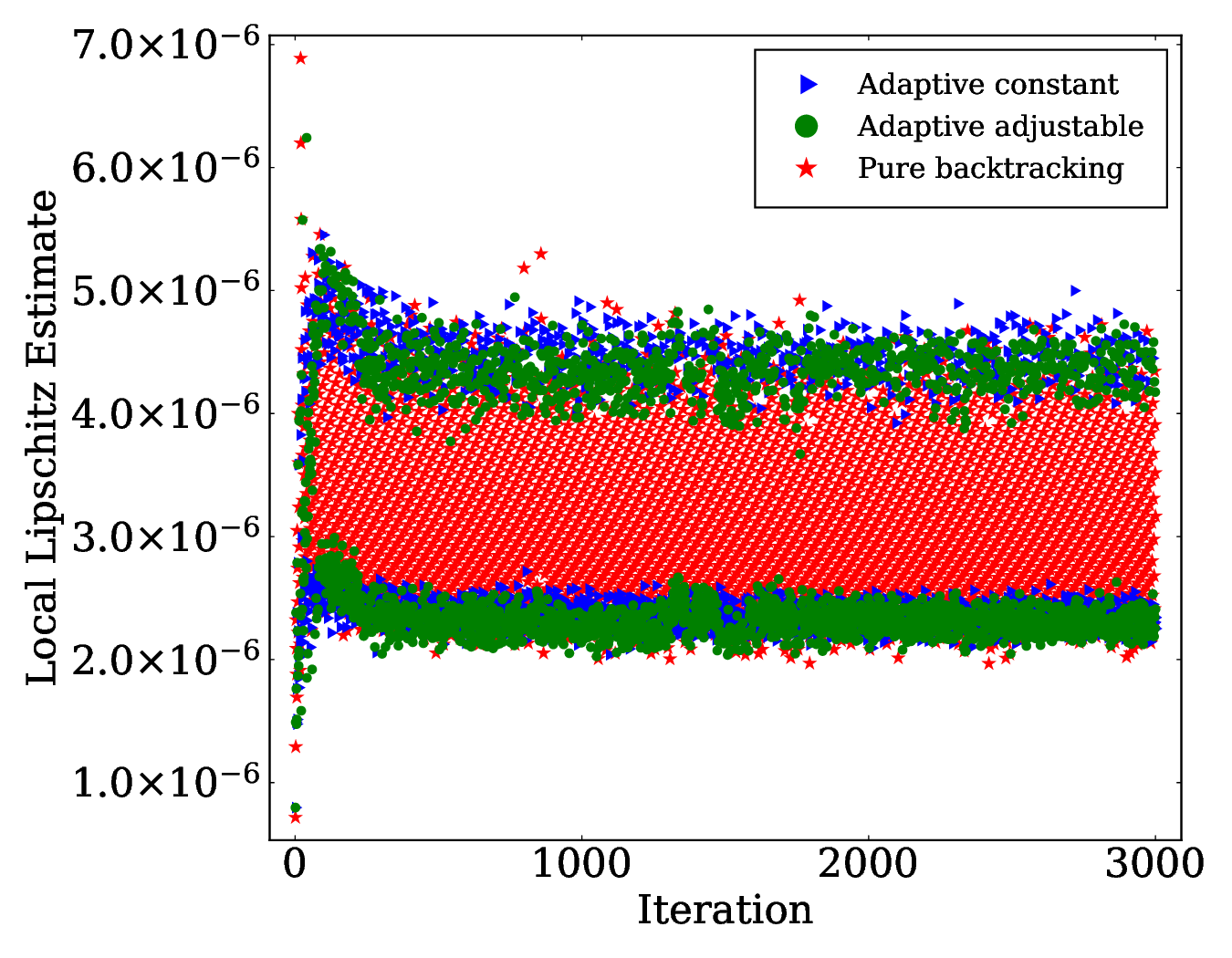}
}\hfill
\subfloat[Gaps over Time]{
  \includegraphics[width=0.3\linewidth]{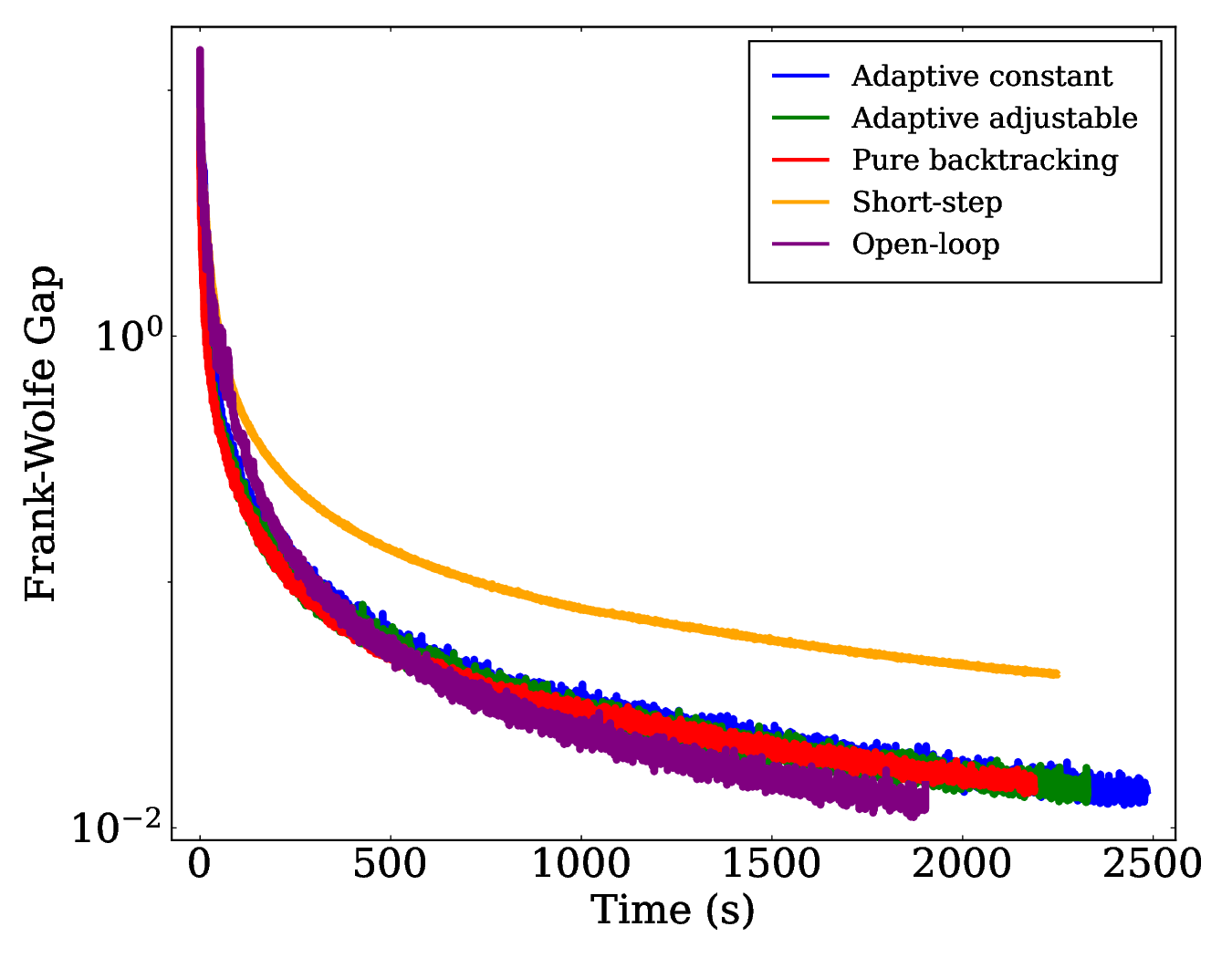}
}
\caption{Convergence behavior and computational efficiency for Collaborative Filtering problem}
\label{fig:CollaborativeFiltering}
\end{figure}


\subsubsection{Simplex Quadratic Optimization Problem (non-convex)}
%

In this part, we consider the standard quadratic optimization problem, which entails minimizing a homogeneous quadratic objective function over the unit simplex set, formulated as
\begin{equation}
\begin{array}{cl}
\min\limits_{x\geq 0} & x^\top Q x \\
\mathrm{s.t.} & \sum\limits_{i=1}^n x_i=1
\end{array}
\end{equation}
where $Q \in \mathbb{R}^{n \times n}$ is symmetric. When $Q$ is indefinite (with both positive and negative eigenvalues), the problem is non-convex and NP-hard \cite{ahmadi2022complexity,Khademi2025}. We use a symmetric indefinite matrix $Q$ of size $n = 900$ from \cite{Khademi2025}  to evaluate the Conditional Gradient algorithm using different step-size strategies.

The results are summarized in Table~\ref{tab:Standard_Quadratic_Problem}. The Adaptive adjustable strategy achieves the best objective value and the smallest Frank-Wolfe gap, demonstrating superior solution quality and convergence. The Pure backtracking method delivers nearly identical accuracy in terms of objective value and Frank-Wolfe gap, at a moderately higher computational cost. The Adaptive constant strategy produces a competitive objective but is slower than Adaptive adjustable. In contrast, the Short-step approach is the fastest but fails to converge to a meaningful solution, yielding a poor objective value and a very large Frank-Wolfe gap. The open-loop method is nearly as fast as Short-step but produces the worst Frank-Wolfe gap of all the methods. These performance trends are further illustrated in Figure~\ref{fig:StQO}, which depicts the convergence behavior and comparative efficiency of the different step-size strategies.

\begin{table}[htpb!]
\centering
\caption{Performance comparison of step-size strategies on non-convex Standard Quadratic Optimization problem}
\label{tab:Standard_Quadratic_Problem}
{
\begin{tabular}{lcccc}
\toprule
Step-size Strategy & Iterations & Time (s) & Objective Value & Frank-Wolfe gap \\
\midrule
Adaptive constant    & 3000 & 1.90 & -6.9860 & 0.0221 \\
Adaptive adjustable  & 3000 & 1.68 & \textbf{-7.0014} & \textbf{0.0065} \\
Pure backtracking    & 3000 & 2.06 & -7.0009 & 0.0068 \\
Short-step           & 3000 & \textbf{1.29} & -5.9066 & 0.9923 \\
Open-loop            & 3000 & 1.30 & -6.9158 & 0.1495 \\
\bottomrule
\end{tabular}
}
\end{table}

\begin{figure}[htpb!]
\centering
\subfloat[Gaps in Log Scale]{
 \includegraphics[width=0.3\linewidth]{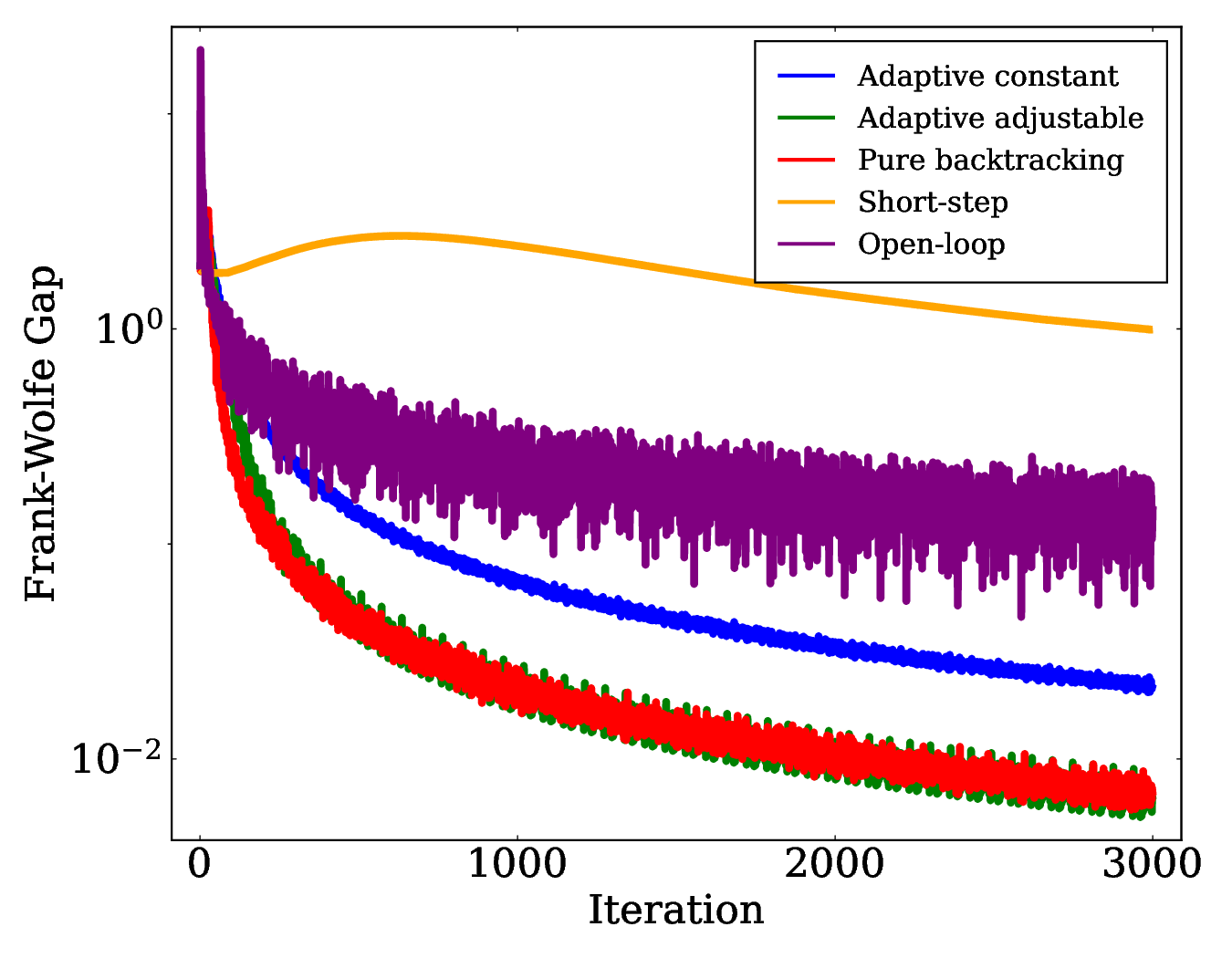}
}\hfill
\subfloat[Lipschitz Constant Evolution]{
 \includegraphics[width=0.3\linewidth]{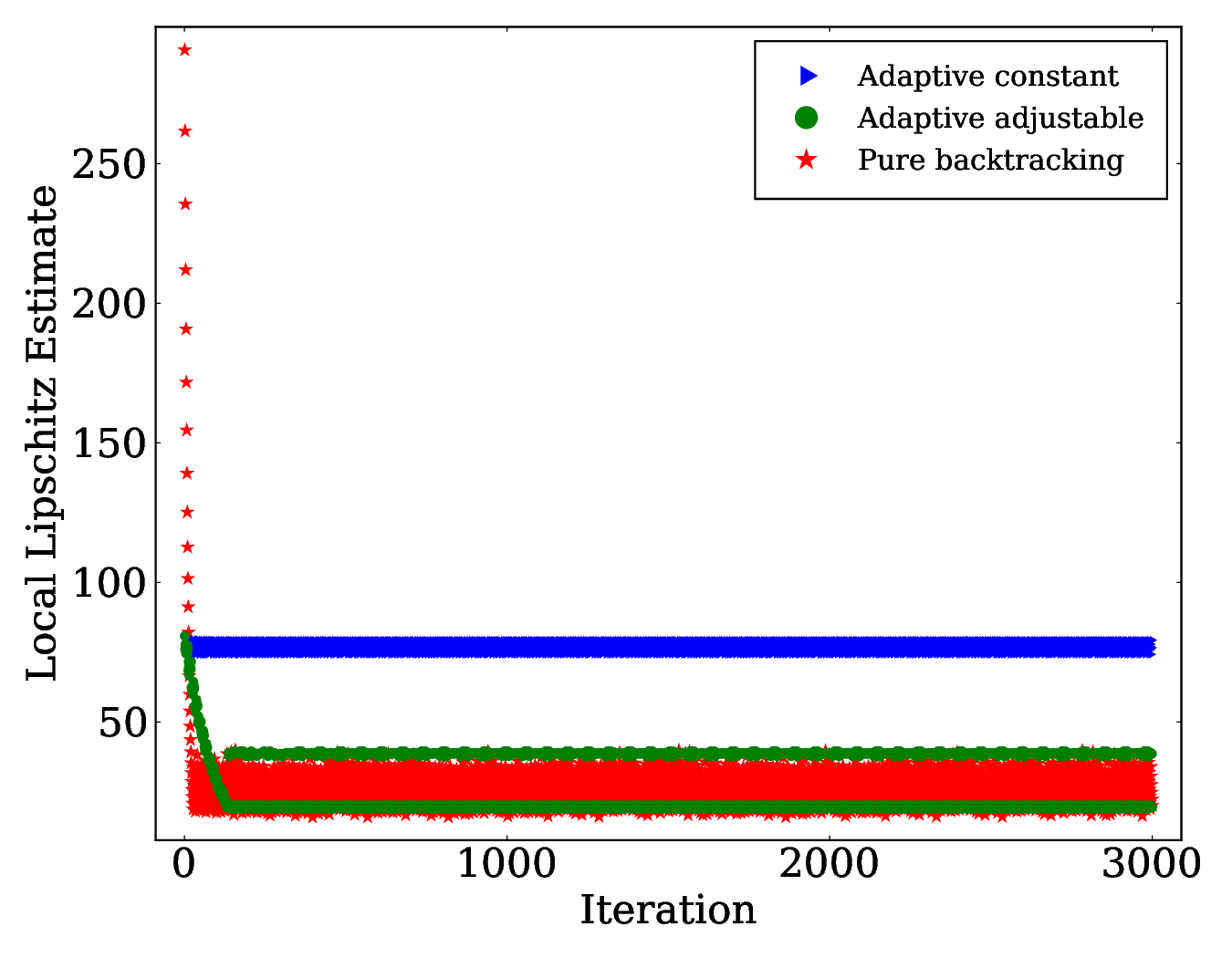}
}\hfill
\subfloat[Gaps over Time]{
 \includegraphics[width=0.3\linewidth]{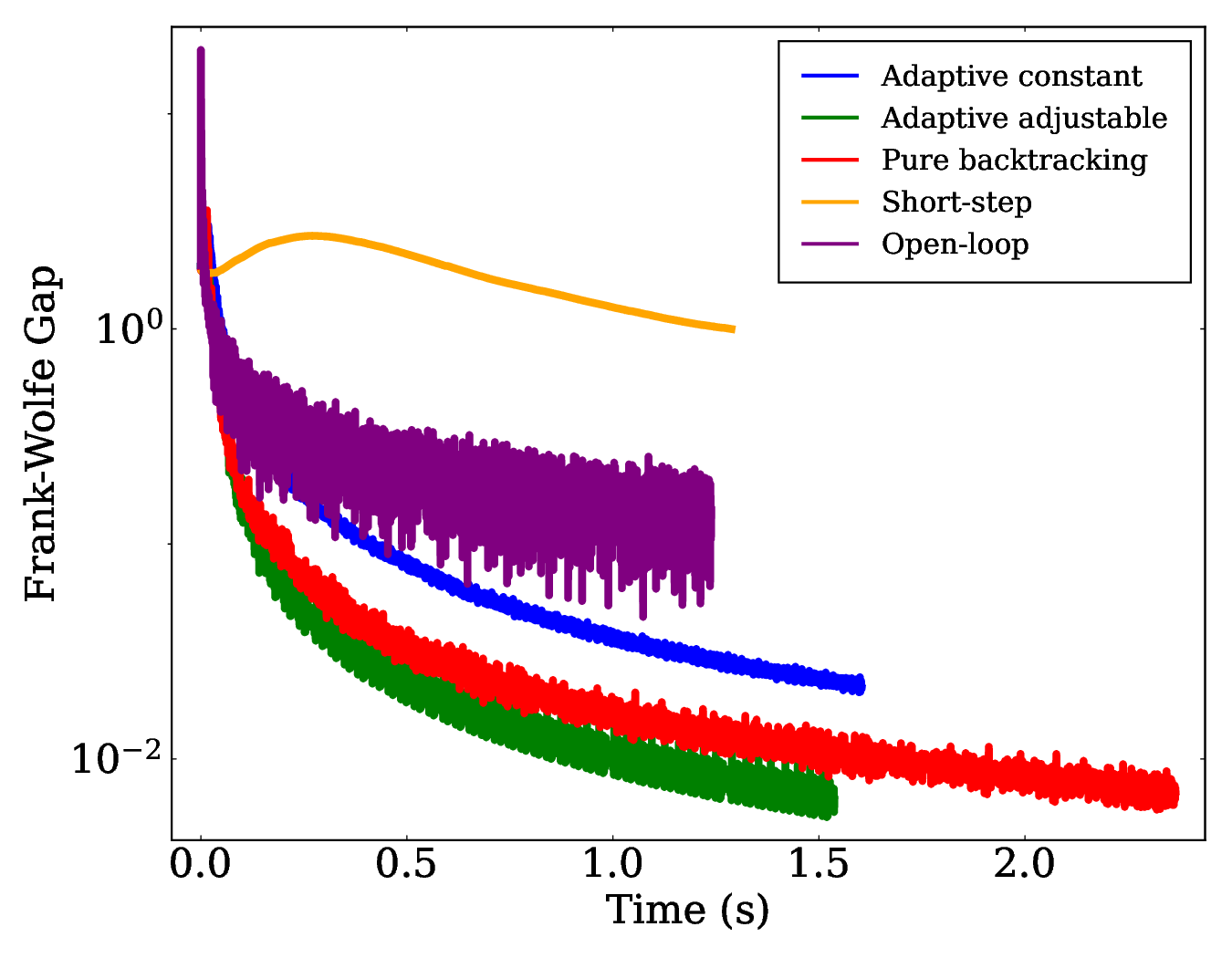}
}
\caption{Convergence behavior and computational efficiency for Standard Quadratic Optimization problem}
\label{fig:StQO}
\end{figure}


Until now, we have observed that for the Conditional Gradient method, the adaptive adjustable step-size strategy outperforms other adaptive step-size methods (pure backtracking and constant) in most cases, achieving faster convergence and competitive gaps. While the open-loop step-size method occasionally performs well, its inconsistency makes it less reliable. The short-step-size method consistently underperforms and fails to converge due to overly conservative steps. This makes the adaptive adjustable method the most reliable and efficient choice for both convex and non-convex problems that we considered in this part.

\subsection{Unconstrained Problems}
We analyze the impact of step-size on the performance of \nameref{alg:ACGD} applied to unconstrained optimization problems. We evaluate three step-size strategies—adaptive constant, adaptive adjustable, and pure backtracking. The process terminates when the gradient norm falls below a tolerance of $10^{-5}$ or after a maximum of 3,000 iterations, whichever occurs first.

\subsubsection{Least Squares Problem Under Different Geometries}
%
%
The least squares problem is a fundamental concept in linear algebra. It is typically used to find the best (approximate) solution to a system of linear equations. The problem is formally stated as
\begin{equation}
\min\limits_{x \in \mathbb{R}^n} \|b - Ax\|_2^2
\end{equation}
where $A \in \mathbb{R}^{m \times n}$ and $b \in \mathbb{R}^m$. For our numerical experiments, we synthetically generate $A$ and $b$ with dimensions $m=20,000$ and $n=1,000$. 

Table \ref{tab:Least_Squares_Problem} compares the performance of various step-size strategies, while Figure \ref{fig:Least Squares} illustrates that adaptive methods efficiently reduce the gradient norm. These results demonstrate that adaptive step-size strategies achieve optimal solutions with superior computational efficiency compared to pure backtracking in the Normalized Steepest Descent algorithm.

For estimating a local Lipschitz constant and performing backtracking, we used $\ell^2$ norm. We observe that, in terms of runtime, objective value, and gradient (LMO dual) norm, the $\ell^2$ geometry for the LMO yields better performance.
\begin{table}[htpb!]
\centering
\caption{Performance comparison of step-size strategies on the unconstrained Least Squares problem with different Geometry.}
\label{tab:Least_Squares_Problem}
\begin{tabular}{clcccc}
\toprule
LMO Norm & Step-size Strategy & Iterations & Time (s) & Objective Value & Gradient (LMO Dual) Norm  \\
\midrule
& Adaptive constant   & 698  & 24.47 & \textbf{1.8857} & \textbf{0.0000} \\
$\ell^2$ & Adaptive adjustable & 742  & \textbf{19.95} & \textbf{1.8857} & \textbf{0.0000} \\
& Pure backtracking   & 3000 & 86.39 & \textbf{1.8857} & 0.0001 \\
\midrule
& Adaptive constant   & 3000 & 95.40 & 1.9217 & \textbf{0.0015} \\
$\ell^1$ & Adaptive adjustable & 3000 & \textbf{68.65} & \textbf{1.9193} & 0.0016 \\
& Pure backtracking   & 3000 & 85.08 & 1.9216 & 0.0016 \\
\midrule
& Adaptive constant   & 3000 & 103.40 & \textbf{1.9039} & 0.0953 \\
$\ell^{\infty}$ & Adaptive adjustable & 3000 & \textbf{69.59} & 1.9046 & 0.0950 \\
& Pure backtracking   & 3000 & 86.01 & 1.9094 & \textbf{0.0868} \\
\bottomrule
\end{tabular}
\end{table}


\begin{figure}[htpb!]
\centering
\subfloat[Gradient Norms in Log Scale]{
  \includegraphics[width=0.3\linewidth]{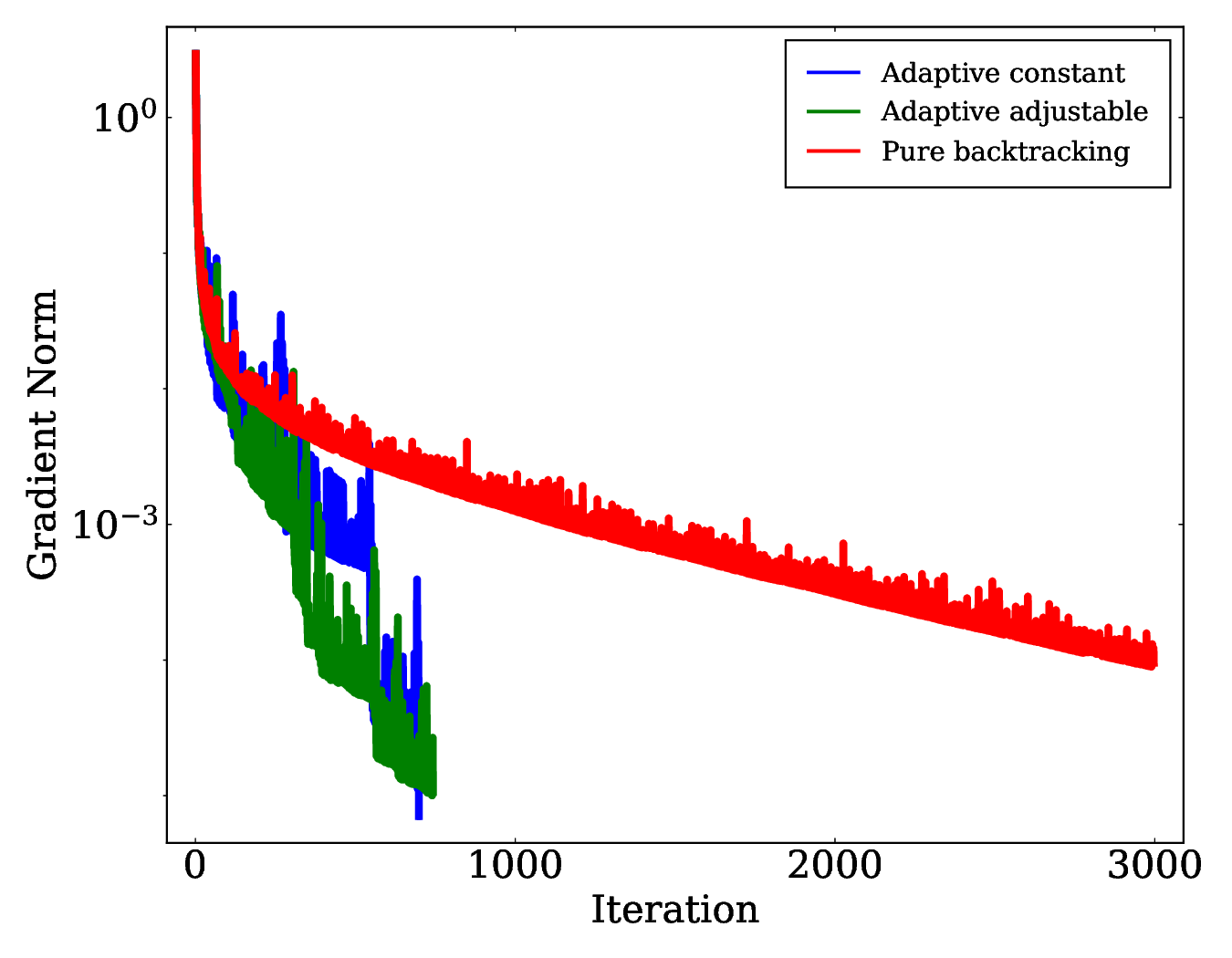}
}\hfill
\subfloat[Lipschitz Constant Evolution]{
   \includegraphics[width=0.3\linewidth]{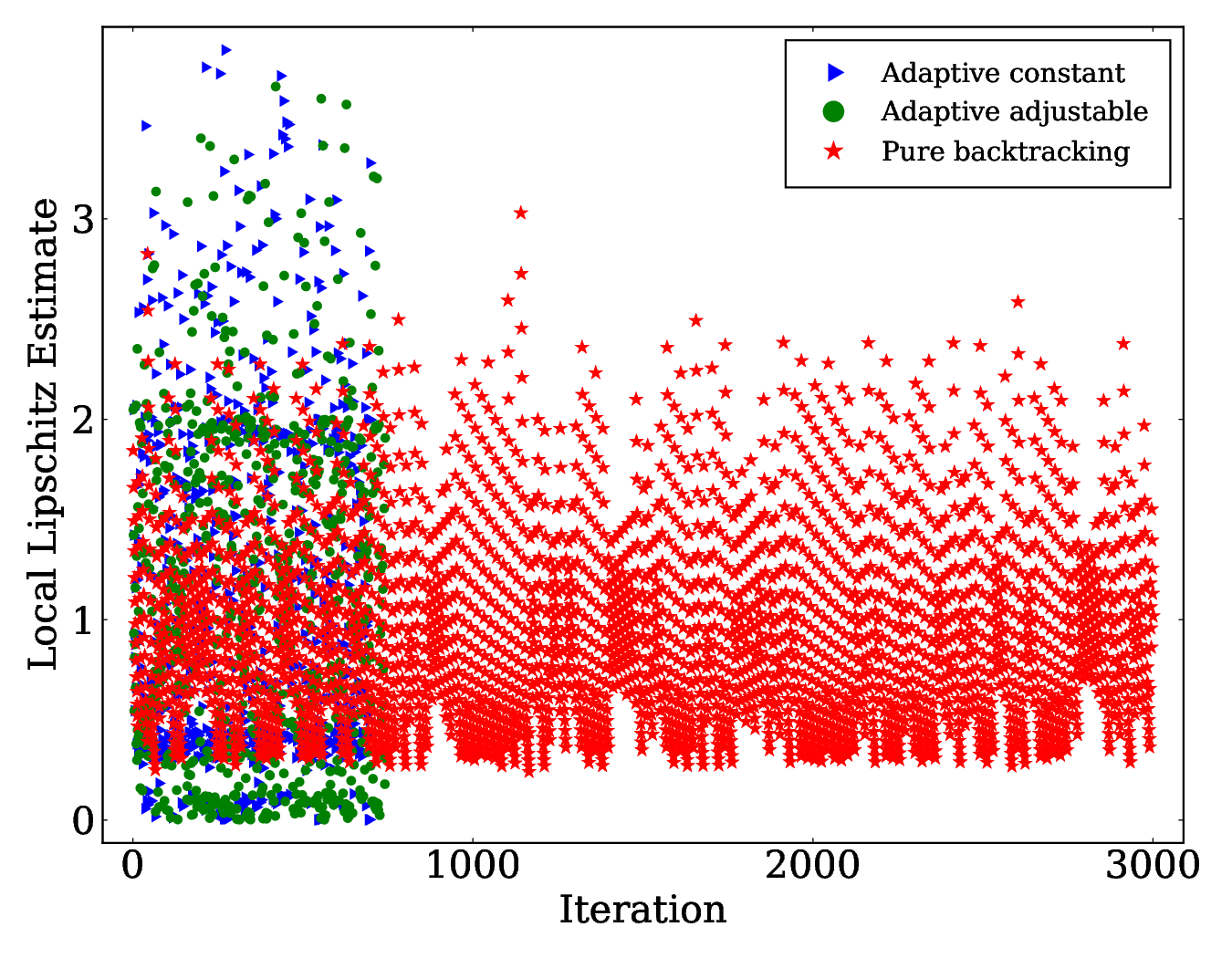}
}\hfill
\subfloat[Gradient Norms over Time]{
    \includegraphics[width=0.3\linewidth]{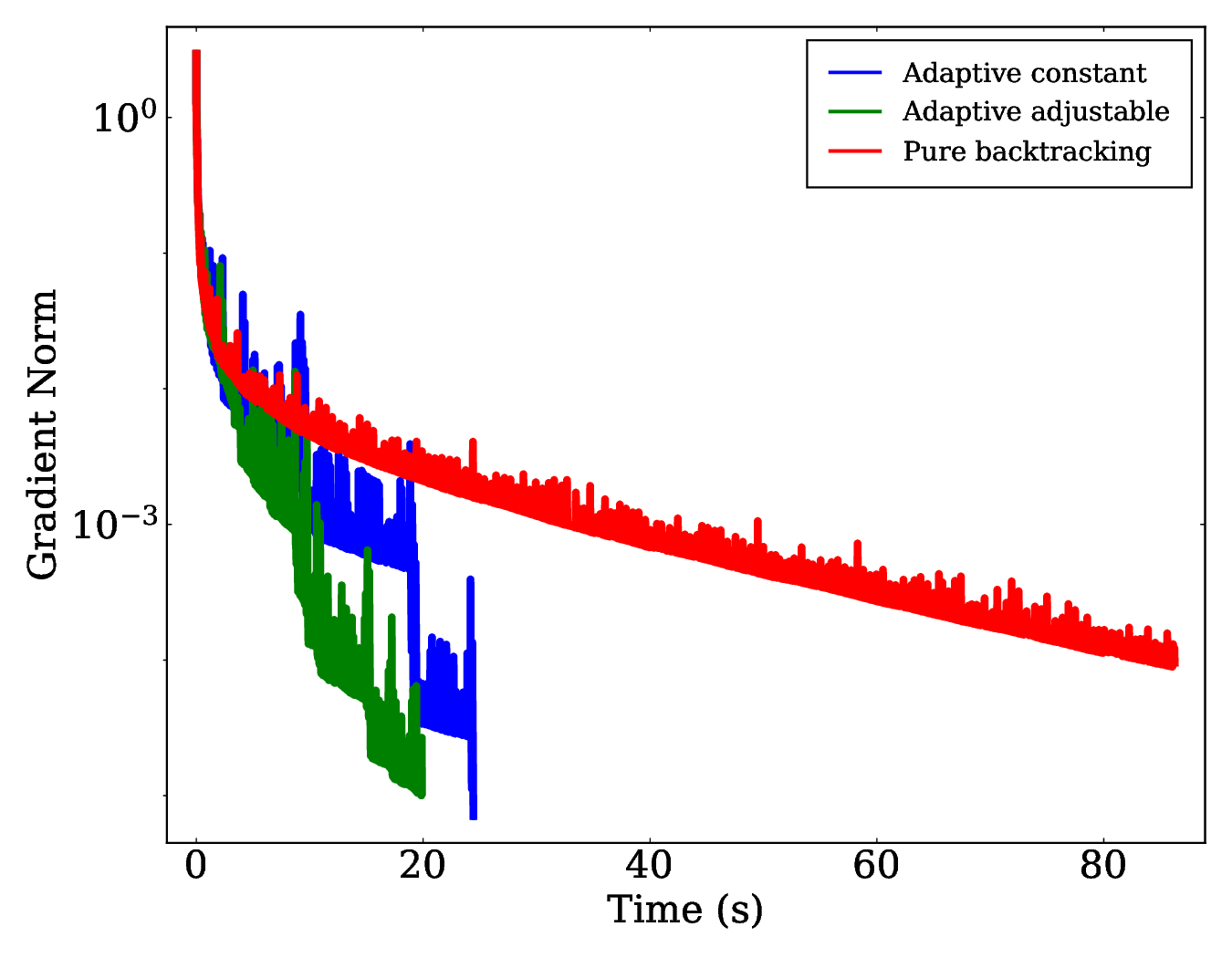}
}
\caption{Convergence behavior and computational efficiency for the Least Squares problem (Backtracking and LMO $\ell^2$).}
\label{fig:Least Squares}
\end{figure}

\subsubsection{Least Squares Sigmoid  Problem (non-convex)}
%
The least squares sigmoid problem involves minimizing a non-convex loss function, specifically the squared error between observed outputs and a sigmoid function. It is formulated as
\begin{equation}
\begin{array}{cl}
\min\limits_{x\in \mathbb{R}^n} & \dfrac{1}{m} \sum\limits_{i=1}^m \left(y_i-\dfrac{1}{1+\exp \left(-x^{\top} a_i\right)}\right)^2.
\end{array}
\end{equation}
For our numerical experiments, we use the LIBSVM \texttt{a1a} dataset \cite{chang2011libsvm} with $m=1,618$ samples and $n=119$ features.

Table \ref{tab:LLS_a1a} compares the performance of different step-size strategies. The adaptive adjustable method is the most time-efficient, achieving a better objective value in 0.88 seconds. The adaptive constant method matches the same objective value of 0.0941 but requires more computational time (1.28 seconds). In contrast, pure backtracking is less efficient, taking 1.25 seconds and yielding a slightly higher objective value of 0.0956.

Figure \ref{fig:LLS_a1a} visually complements the numerical findings in Table \ref{tab:LLS_a1a}, illustrating the convergence behavior and computational efficiency of our adaptive step-size strategies.
\begin{table}[htpb!]
\centering
\caption{Performance comparison of step-size strategies on non-convex least squares sigmoid problem}
\label{tab:LLS_a1a}
{
\begin{tabular}{lcccc}
\toprule
Step-size Strategy & Iterations & Time (s) & Objective Value & Gradient Norm  \\
\midrule
Adaptive constant  & 3000 & 1.28 & \textbf{0.0941} & 0.0001 \\
Adaptive adjustable & 3000 & \textbf{0.88} & \textbf{0.0941} & \textbf{0.0000} \\
Pure backtracking  & 3000 & 1.25 & 0.0956 & 0.0001 \\
\bottomrule
\end{tabular}
}
\end{table}

\begin{figure}[htpb!]
\centering
\subfloat[Gradient Norms in Log Scale]{
  \includegraphics[width=0.3\linewidth]{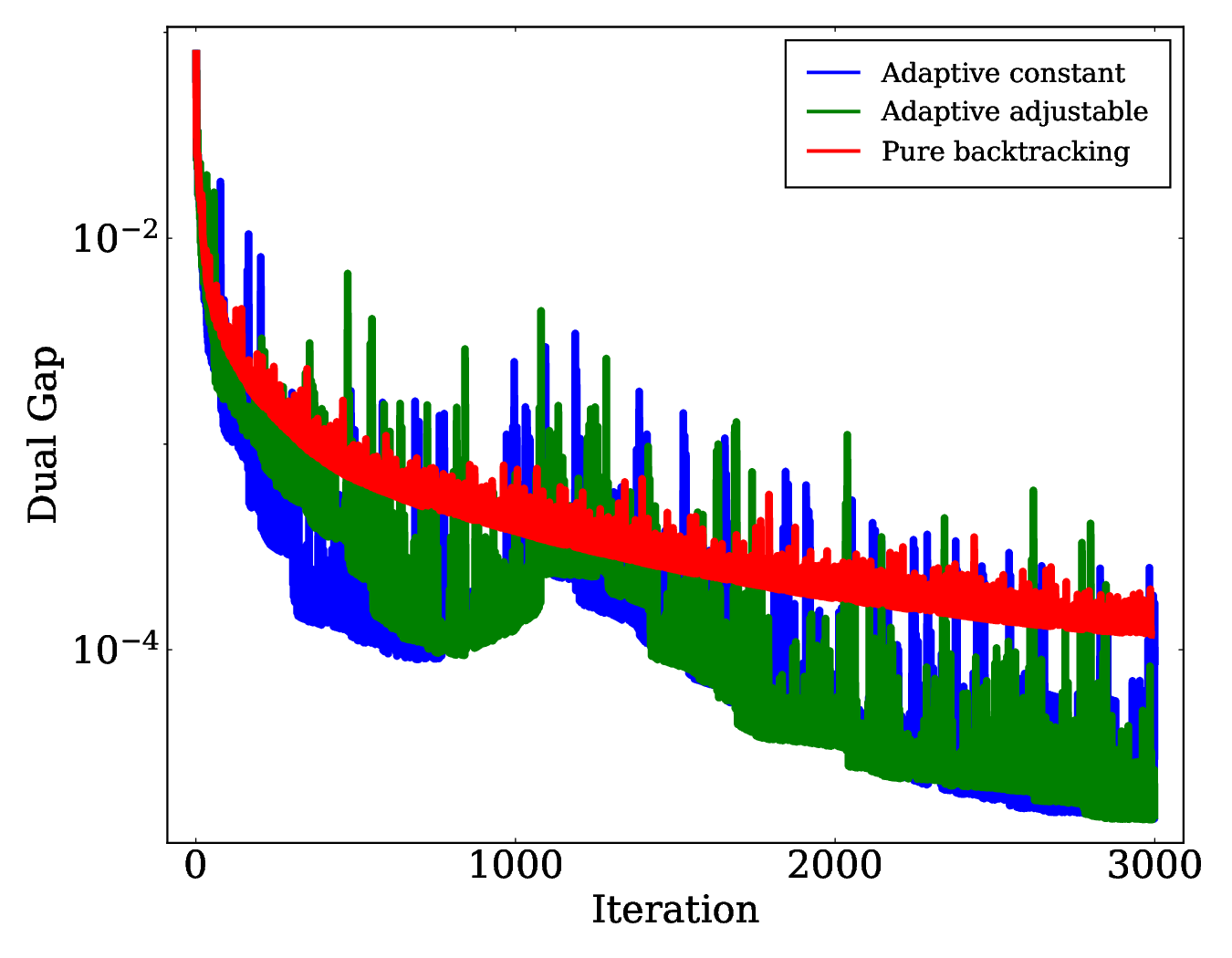}
}\hfill
\subfloat[Lipschitz Constant Evolution]{
   \includegraphics[width=0.3\linewidth]{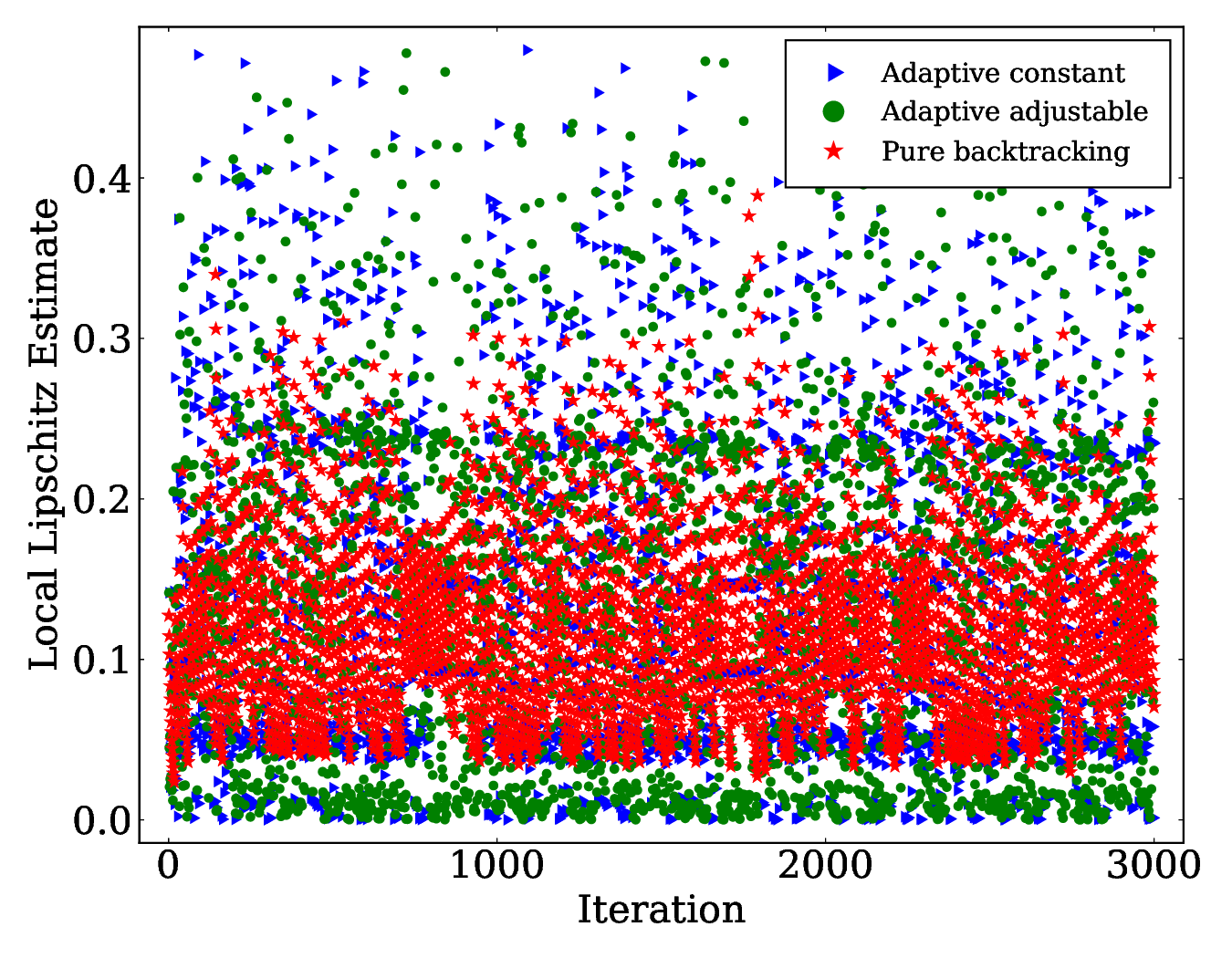}
}\hfill
\subfloat[Gradient Norms over Time]{
    \includegraphics[width=0.3\linewidth]{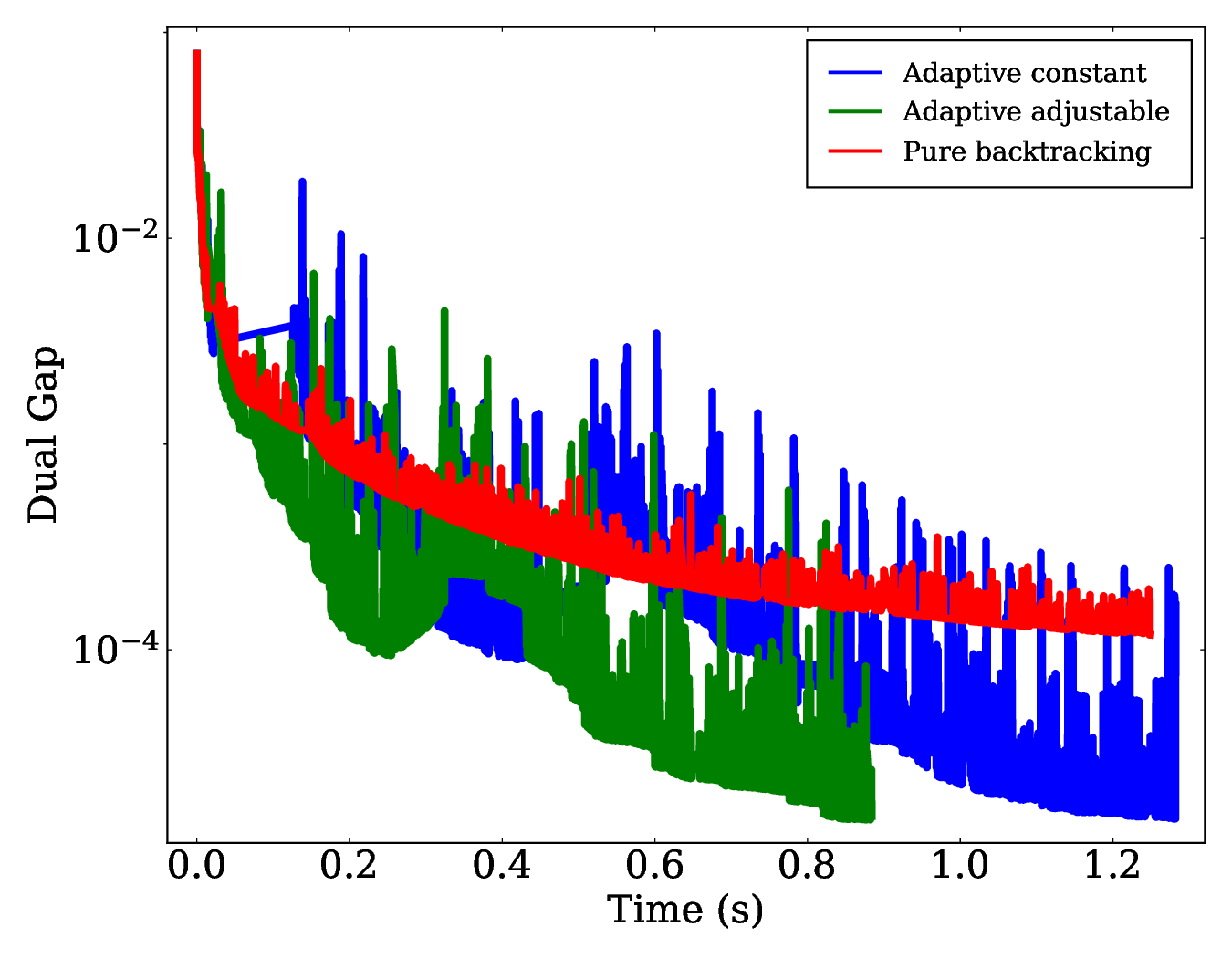}
}
\caption{Convergence behavior and computational efficiency for non-convex least squares sigmoid problem}
\label{fig:LLS_a1a}
\end{figure}

\subsubsection{Sum of Squares Problem (non-convex)}
%

Next, we consider a non-convex sum of squares problem as follows
\begin{equation}
\begin{array}{cl}
\min\limits_{x\in \mathbb{R}^n} & \left(x_1-3\right)^2+\sum\limits_{i=2}^n\left(x_1-3-2\left(x_1+\ldots+x_i\right)^2\right)^2. 
\end{array}
\end{equation}
For our numerical experiments, we set the dimension to $n=200$.

Table \ref{tab:Sos} presents the performance comparison results. The adaptive constant method achieves the best objective value and the lowest gradient norm, indicating superior solution quality. However, it is the slowest in terms of runtime. The adaptive adjustable method offers a faster runtime but results in a higher objective value and gradient norm. The pure backtracking method is the fastest but yields the highest objective value and gradient norm, suggesting convergence to a significantly poorer local minimum. The consistently large maximum and mean Lipschitz constant values across all methods reflect the challenging nature of this non-convex problem.

\begin{table}[htpb!]
\centering
\caption{Performance comparison of step-size strategies on the non-convex Sum of Squares problem.}
\label{tab:Sos}
{
\begin{tabular}{lcccc}
\toprule
Step-size Strategy & Iterations & Time (s) & Objective Value & Gradient Norm \\
\midrule
Adaptive constant  & 3000 & 13.17 & \textbf{0.6239} & \textbf{0.9193} \\
Adaptive adjustable& 3000 & 12.45 & 4.3445 & 6.0382  \\
Pure backtracking  & 3000 & \textbf{12.38} & 66.9148 & 47.8900  \\
\bottomrule
\end{tabular}
}
\end{table}

Figure \ref{fig:Sos} illustrates the convergence behavior. Subfigure \ref{fig:Sos}(b) shows that, for $k \geq 100$, the adaptive constant method yields more stable and lower Lipschitz estimates compared to the pure backtracking method, which exhibits greater variability and higher values.

\begin{figure}[htpb!]
\centering
\subfloat[Gradient Norms in Log Scale]{
  \includegraphics[width=0.3\linewidth]{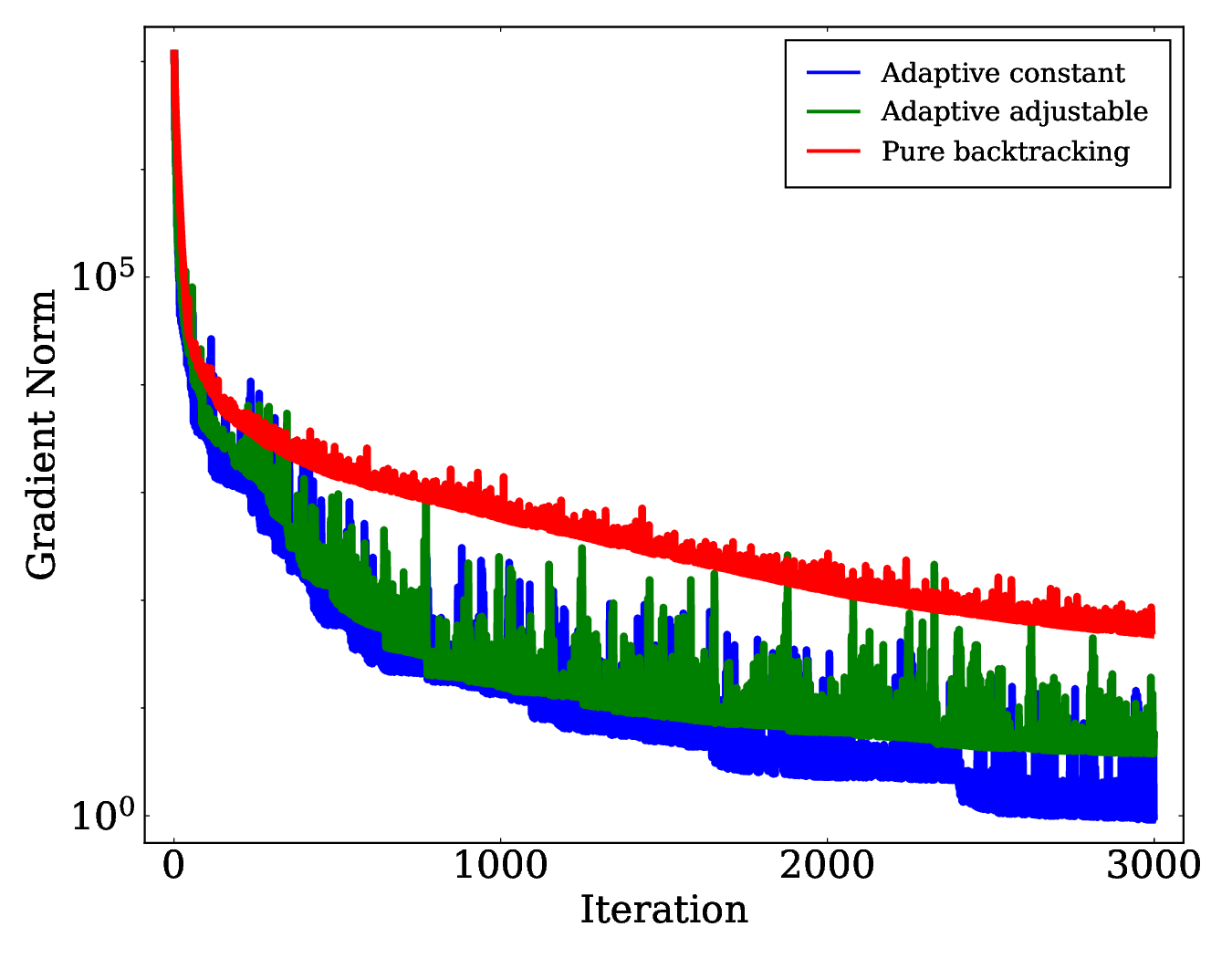}
}\hfill
\subfloat[Lipschitz Constant Evolution ($k\geq 100$)]{
  \includegraphics[width=0.3\linewidth]{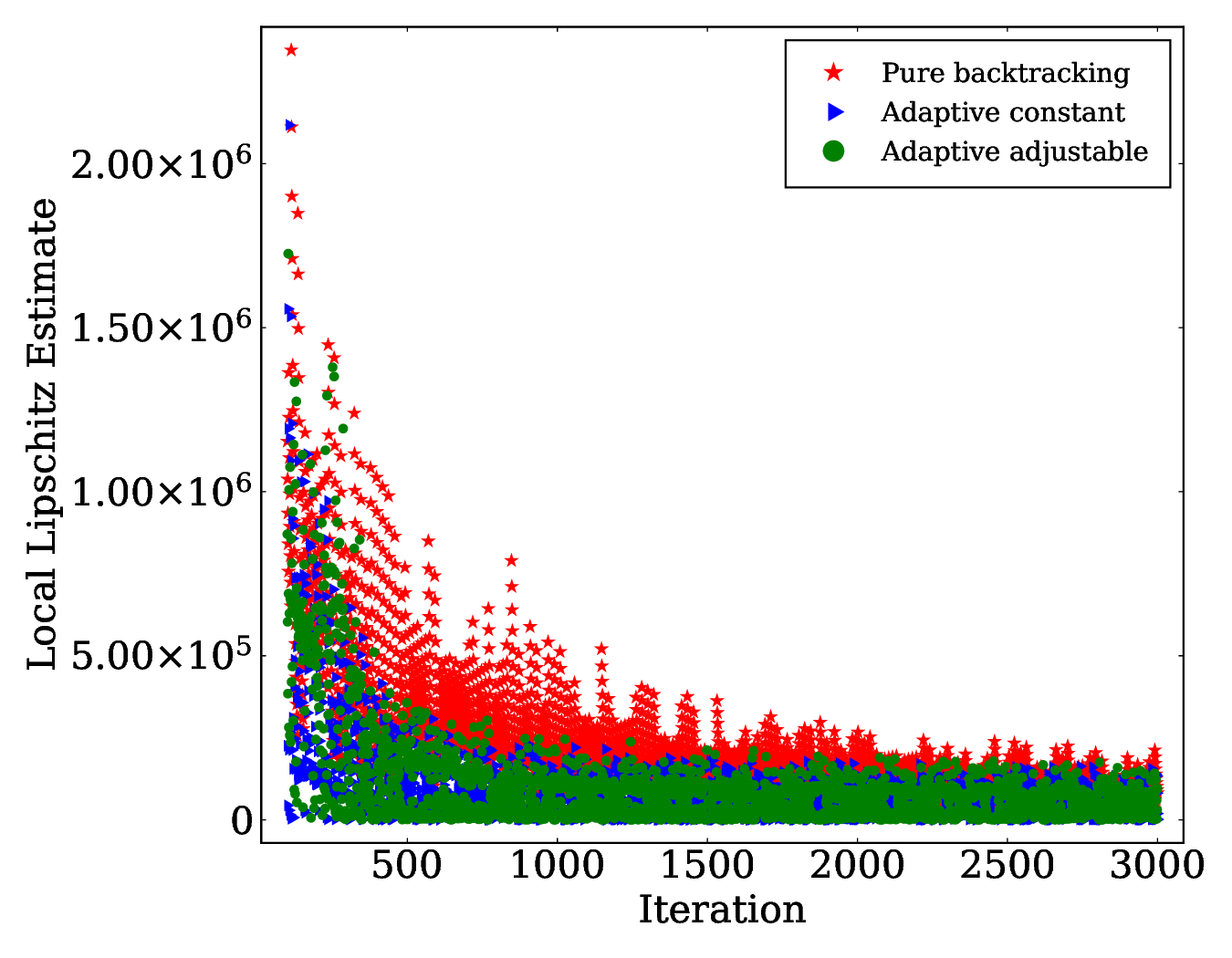}
}\hfill
\subfloat[Gradient Norms over Time]{
   \includegraphics[width=0.3\linewidth]{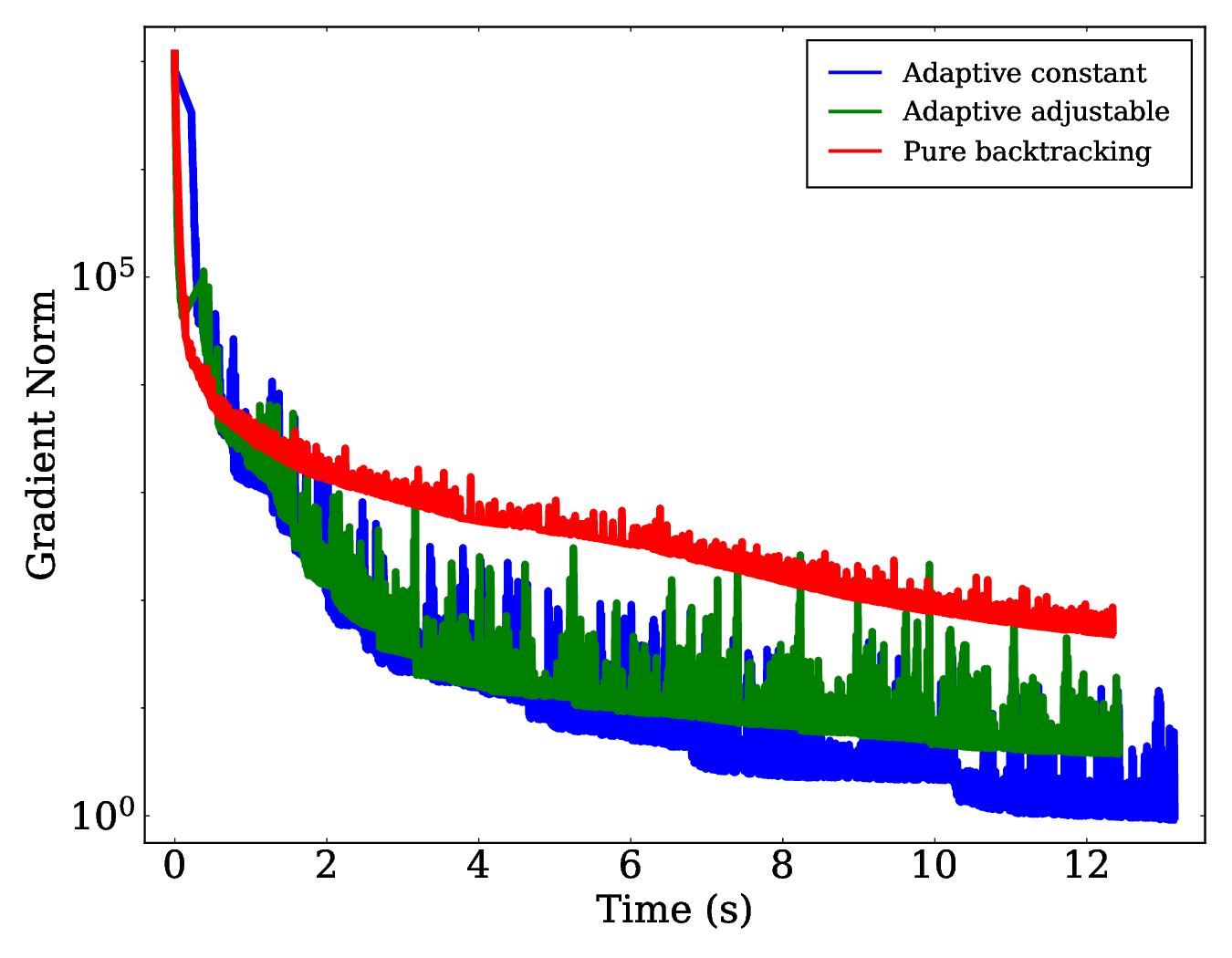}
}
\caption{Convergence behavior and computational efficiency for the non-convex Sum of Squares problem.}
\label{fig:Sos}
\end{figure}

\subsubsection{Rosenbrock Function (non-convex)}

%

We also evaluate our method on the Rosenbrock function, a classic benchmark problem renowned for its challenging non-convex structure \cite{rosenbrock1960automatic}. Consider the following problem
\begin{equation} \label{eq:rosenbrock}
\min\limits_{x \in \mathbb{R}^n} \sum_{i=1}^{n-1} \left( 100 (x_{i+1} - x_i^2)^2 + (x_i - 1)^2 \right).
\end{equation}
Its narrow, parabolic valley poses significant convergence difficulties. For numerical experiments, we set $n=1,000$.

Table \ref{tab:Rosenbrock_Function} shows that the adaptive adjustable method achieves the best performance with the lowest objective value and gradient norm in the fastest time. The adaptive constant method is slower with a higher objective value and gradient norm. Pure backtracking is slightly faster than adaptive constant but yields the worst objective value and gradient norm.

These observations are visually supported by Figure \ref{fig:Rosenbrock_Function}, which illustrates the convergence behavior and computational efficiency.
\begin{table}[htpb!]
\centering
\caption{Performance comparison of step-size strategies on the non-convex Rosenbrock function.}
\label{tab:Rosenbrock_Function}
{
\begin{tabular}{lcccc}
\toprule
Step-size Strategy & Iterations & Time (s) & Objective Value & Gradient Norm  \\
\midrule
Adaptive constant  & 3000 & 82.37 & 928.6291 & 3.2603 \\
Adaptive adjustable& 3000 & \textbf{81.19} & \textbf{923.9158} & \textbf{2.6786}  \\
Pure backtracking  & 3000 & 81.80 & 979.7630 & 3.5251  \\
\bottomrule
\end{tabular}
}
\end{table}

\begin{figure}[htpb!]
\centering
\subfloat[Gradient Norms in Log Scale]{
  \includegraphics[width=0.3\linewidth]{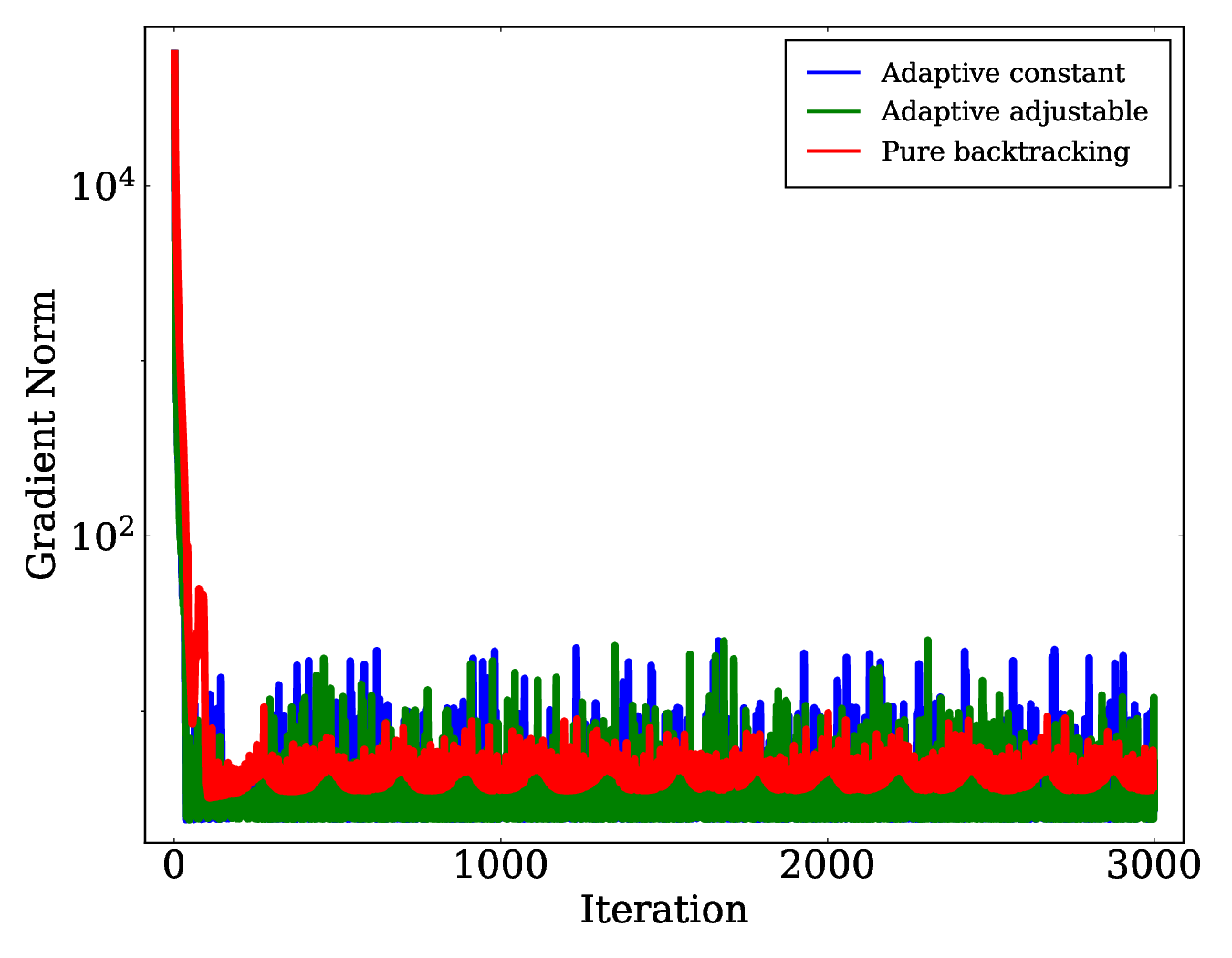}
}\hfill
\subfloat[Lipschitz Constant Evolution]{
  \includegraphics[width=0.3\linewidth]{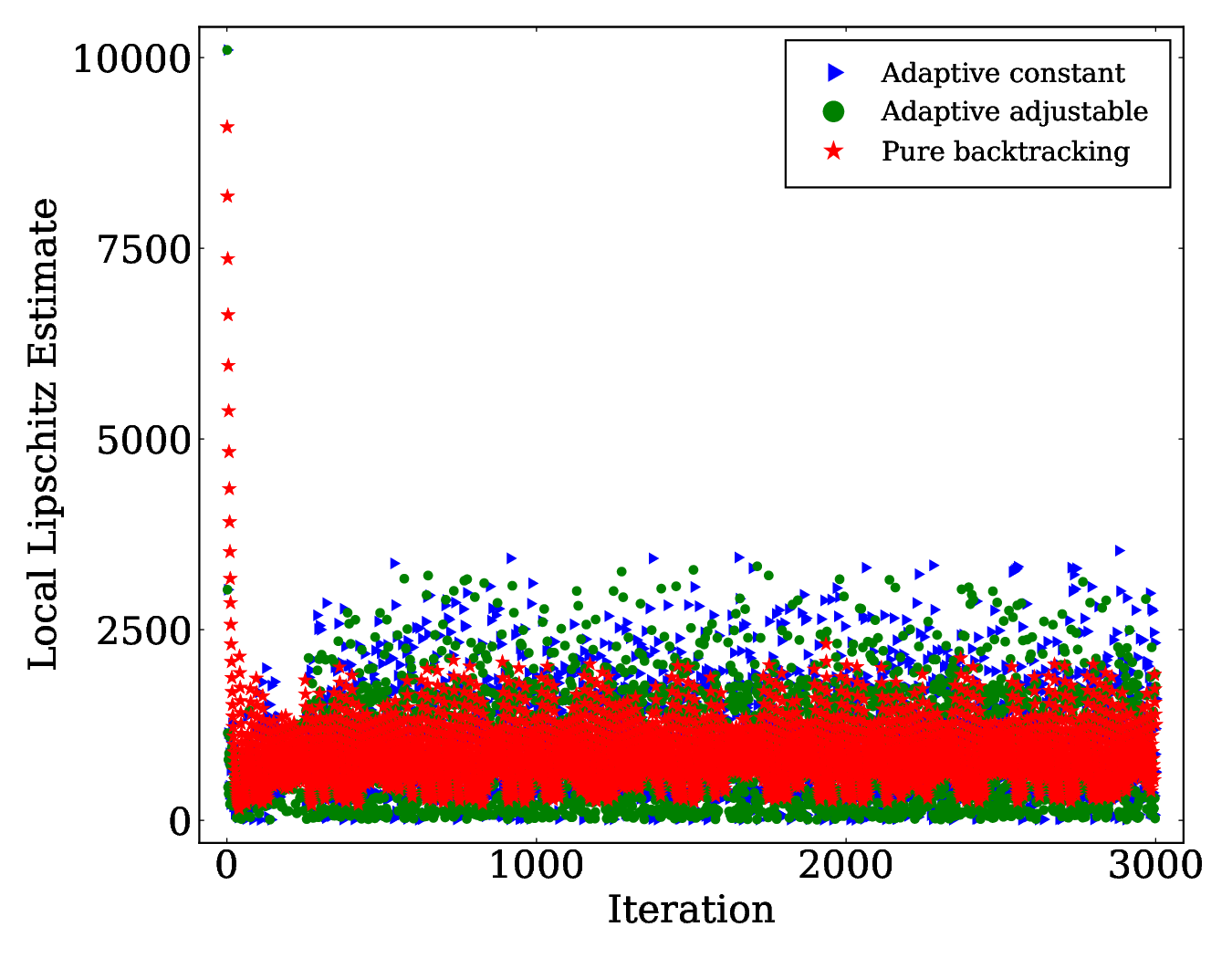}
}\hfill
\subfloat[Gradient Norms over Time]{
  \includegraphics[width=0.3\linewidth]{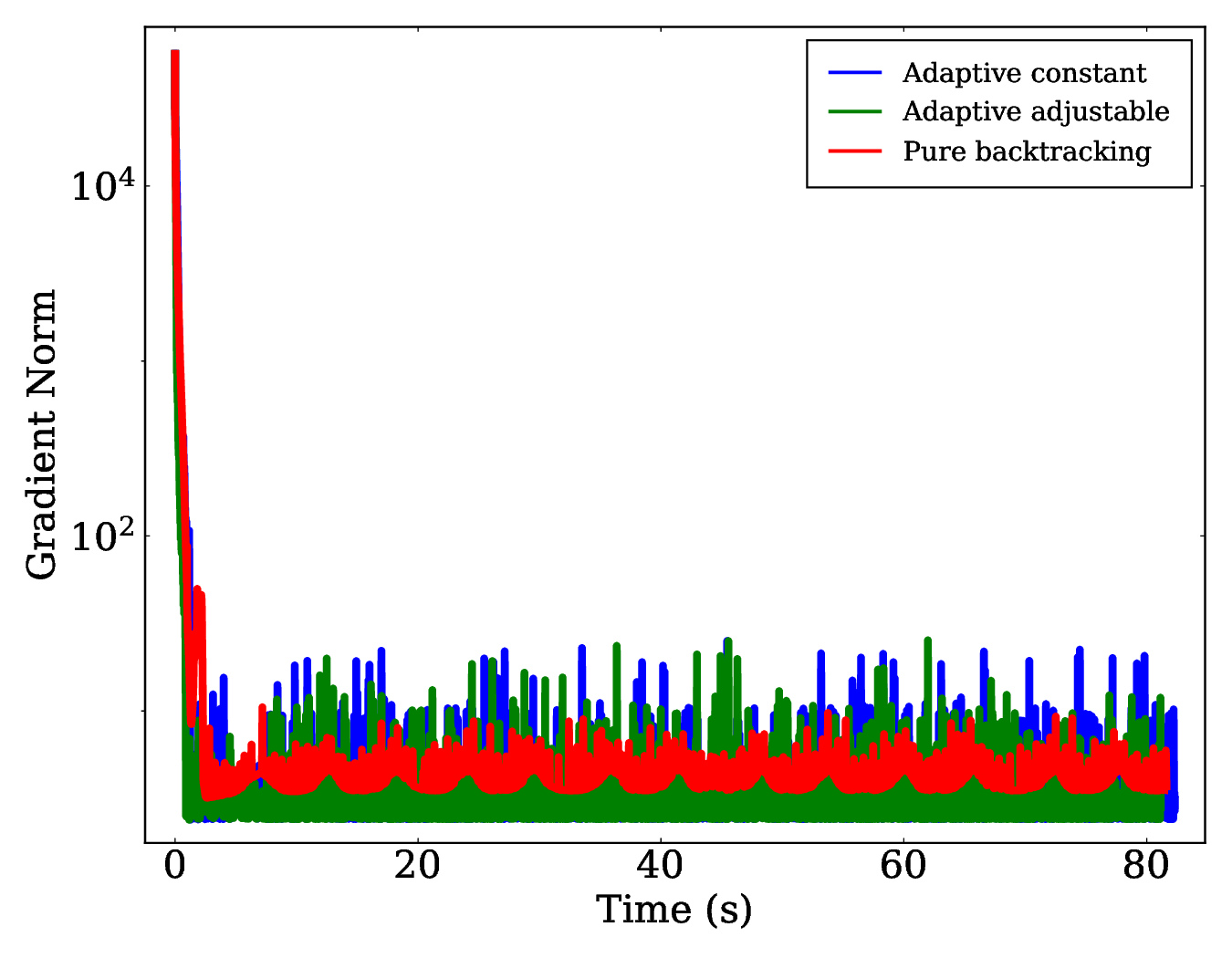}
}
\caption{Convergence behavior and computational efficiency for the non-convex Rosenbrock function.}
\label{fig:Rosenbrock_Function}
\end{figure}

\subsubsection{Lévy Function}

We evaluate the non-convex multimodal Lévy function (a common benchmark for global optimization algorithms) defined as

\begin{equation}
\begin{array}{cl}
 \min\limits_{x \in \mathbb{R}^n} & \sin^2(\pi w_1) + \sum\limits_{i=1}^{n-1} (w_i - 1)^2 \left(1 + 10 \sin^2(\pi w_{i+1})\right) + (w_n - 1)^2 \left(1 + \sin^2(2\pi w_n)\right)
\end{array}
\end{equation}
where $ w_i = 1 + \frac{x_i - 1}{4} $ for all $ i = 1,\dots,n $. We set $n = 1,000$

Table \ref{tab:Levy_Function} compares step-size strategies for the Normalized Gradient Descent algorithm. The adaptive constant method achieves the lowest objective value and gradient norm with fewer iterations. The adaptive adjustable method is slightly faster but yields higher objective value and gradient norm. Pure backtracking is the fastest but has the highest objective value and gradient norm.
Figure \ref{fig:Levy_Function} illustrates the convergence behavior and computational efficiency.

\begin{table}[htpb!]
\centering
\caption{Performance comparison of step-size strategies on the non-convex Lévy function.}
\label{tab:Levy_Function}
{
\begin{tabular}{lcccc}
\toprule
Step-size Strategy & Iterations & Time (s) & Objective Value & Gradient Norm  \\
\midrule
Adaptive constant  & 2808 & 40.34 & \textbf{24.7208} & \textbf{0.0000}     \\
Adaptive adjustable& 3000 & 39.98 & 25.2061 & 0.3505  \\
Pure backtracking  & 3000 & \textbf{39.02} & 522.6736 & 3.2346 \\
\bottomrule
\end{tabular}
}
\end{table}

\begin{figure}[htpb!]
\centering
\subfloat[Gradient Norms in Log Scale]{
   \includegraphics[width=0.3\linewidth]{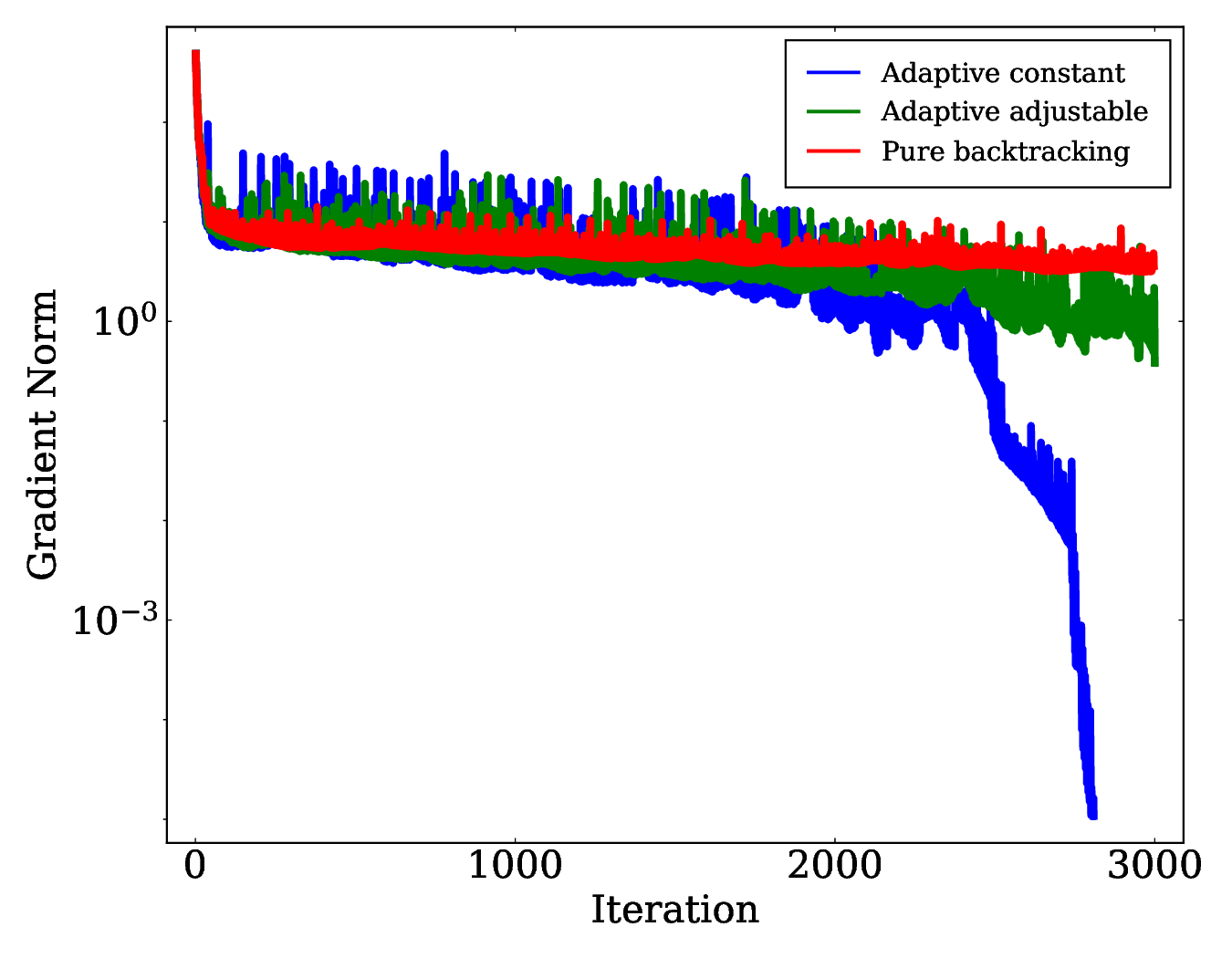}
}\hfill
\subfloat[Lipschitz Constant Evolution]{
   \includegraphics[width=0.3\linewidth]{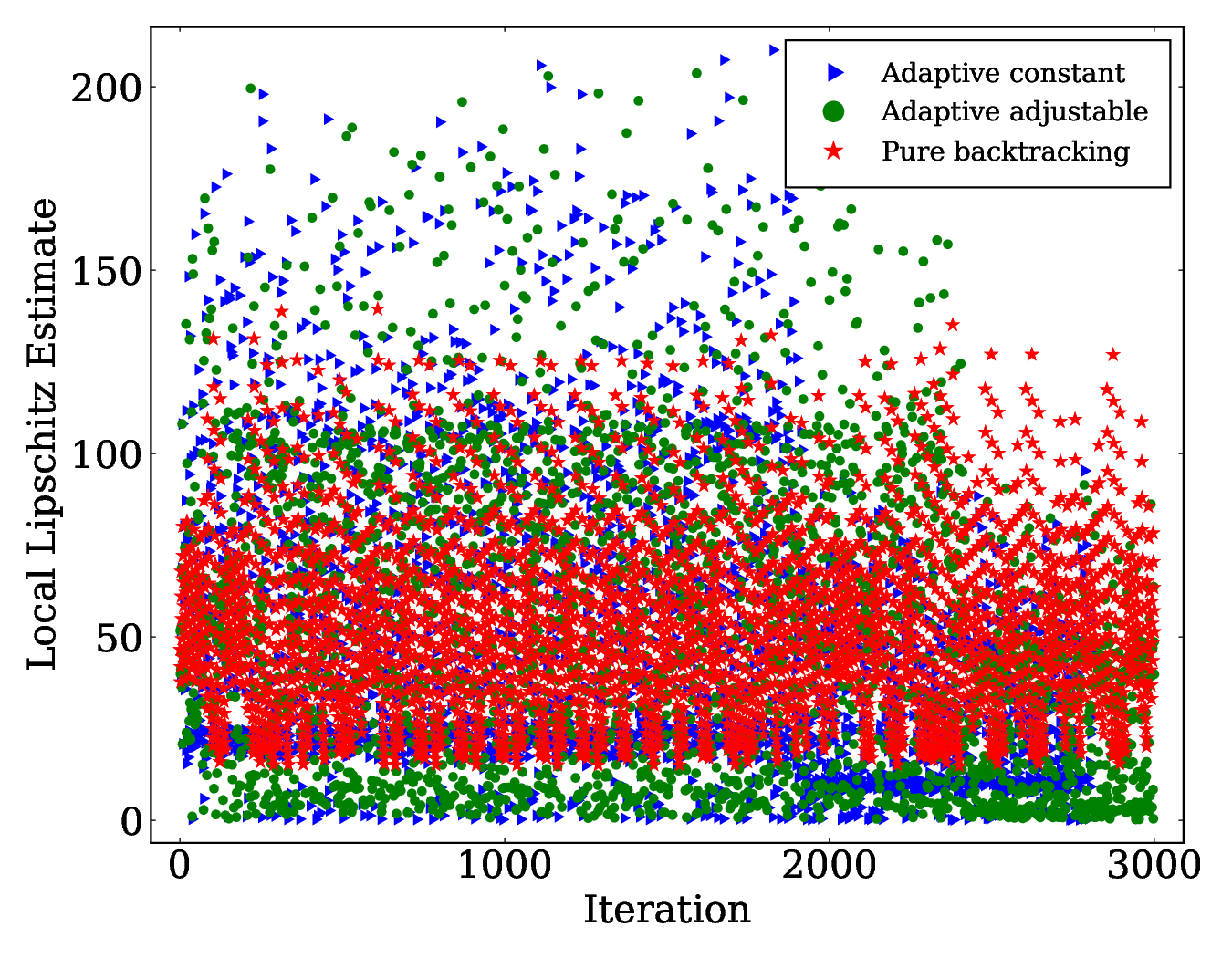}
}\hfill
\subfloat[Gradient Norms over Time]{
   \includegraphics[width=0.3\linewidth]{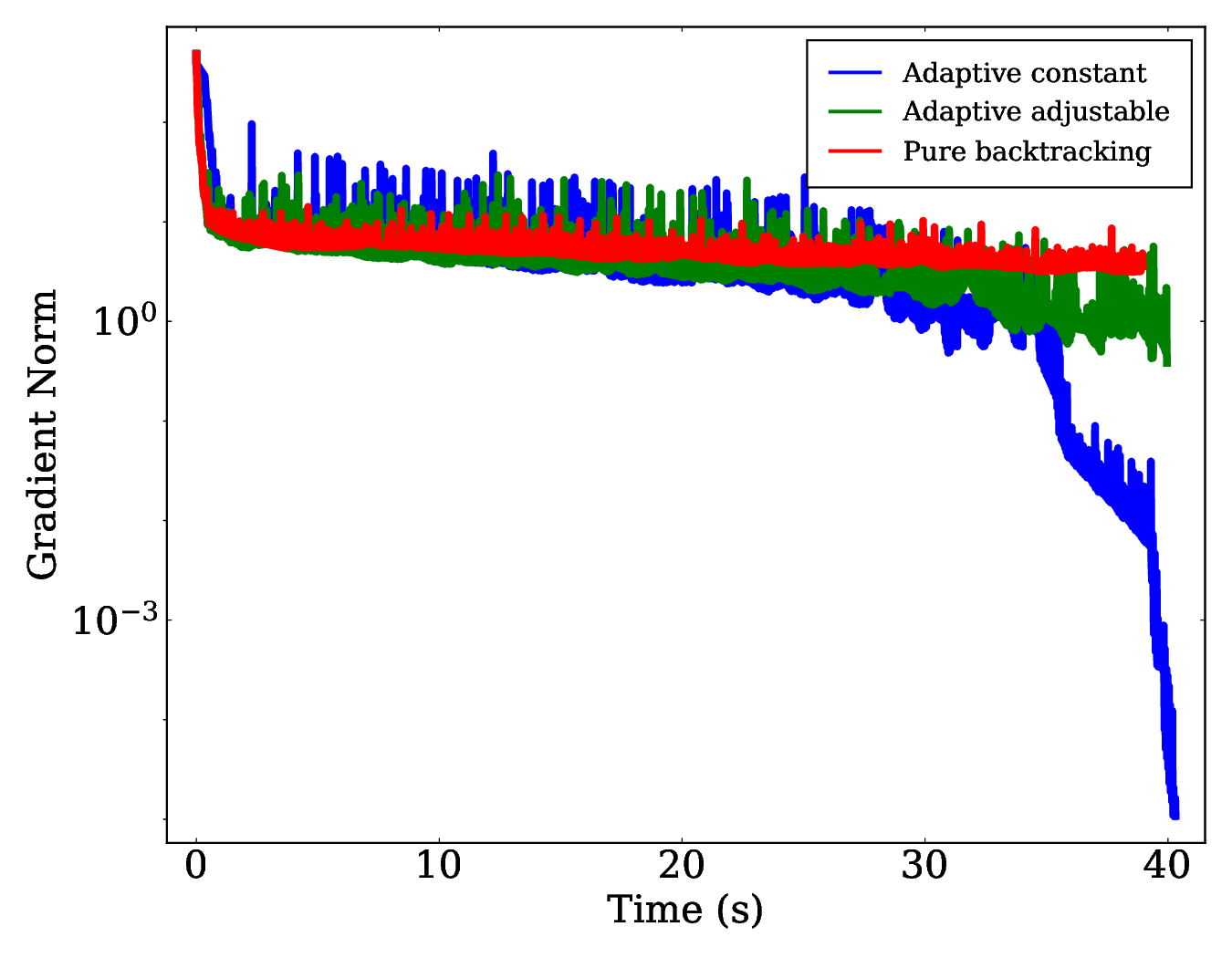}
}
\caption{Convergence behavior and computational efficiency for the non-convex Lévy function.}
\label{fig:Levy_Function}
\end{figure}

\subsubsection{Zakharov Function}

%

We evaluate the Zakharov function, a convex benchmark for optimization algorithms, defined as
\begin{equation}
\begin{array}{cl}
 \min\limits_{x \in \mathbb{R}^n} & \sum\limits_{i=1}^{n} x_i^2 + \left( \frac{1}{2} \sum\limits_{i=1}^{n} i x_i \right)^2 + \left( \frac{1}{2} \sum\limits_{i=1}^{n} i x_i \right)^4.
\end{array}
\end{equation}
We set $n=1,000$.

Table \ref{tab:Zakharov_Function} compares step-size strategies for the Normalized Gradient Descent algorithm. The adaptive adjustable method achieves the lowest objective value and gradient norm. The adaptive constant method is slightly slower with a higher objective value and gradient norm. Pure backtracking is the fastest but yields the highest objective value and a comparable gradient norm.
Figure \ref{fig:Zakharov_Function} illustrates the convergence behavior and local Lipschitz constant estimation for $k \geq 250$.
\begin{table}[htpb!]
\centering
\caption{Performance comparison of step-size strategies on the convex Zakharov function}
\label{tab:Zakharov_Function}
{
\begin{tabular}{lcccc}
\toprule
Step-size Strategy & Iterations & Time (s) & Objective Value & Gradient Norm \\
\midrule
Adaptive constant  & 3000 & 46.27 & 1041.89 & 112.34 \\
Adaptive adjustable& 3000 & 45.73 & \textbf{1039.44} & \textbf{64.58}   \\
Pure backtracking  & 3000 & \textbf{44.71} & 1044.64 & 64.64   \\
\bottomrule
\end{tabular}
}
\end{table}

\begin{figure}[htpb!]
\centering
\subfloat[Gradient Norms in Log Scale]{
   \includegraphics[width=0.3\linewidth]{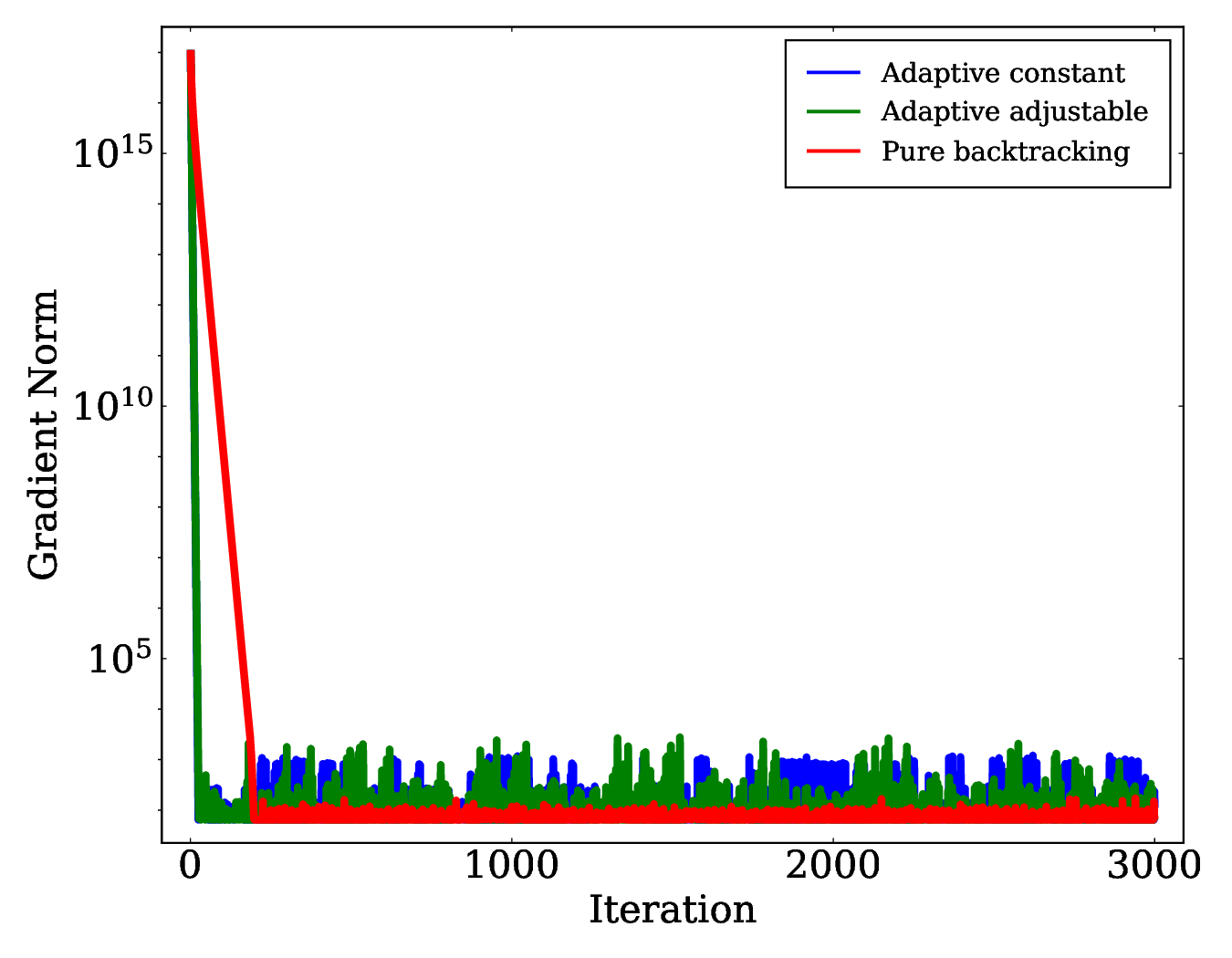}
}\hfill
\subfloat[Lipschitz Constant Evolution ($k\geq250$)]{
   \includegraphics[width=0.3\linewidth]{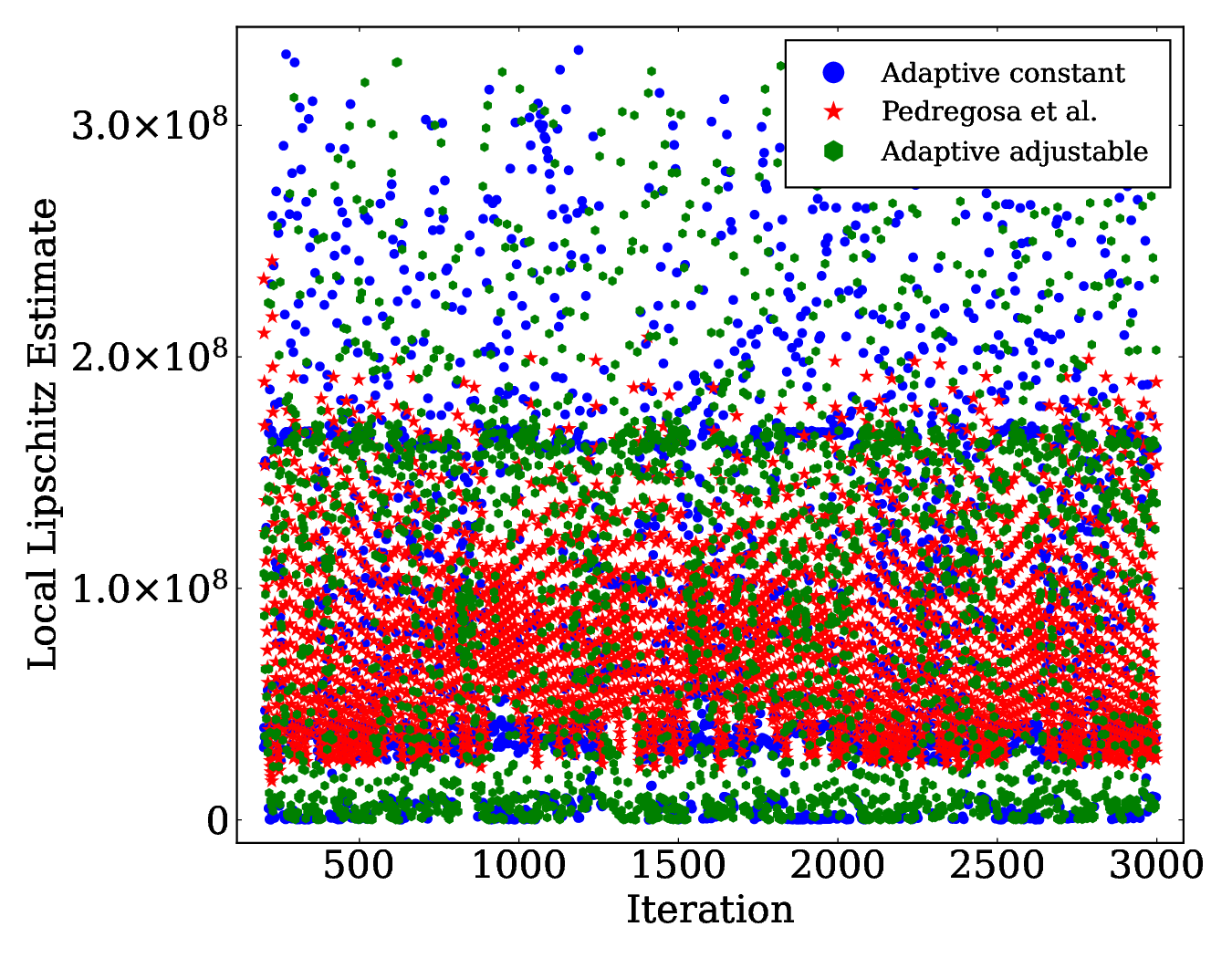}
}\hfill
\subfloat[Gradient Norms over Time]{
   \includegraphics[width=0.3\linewidth]{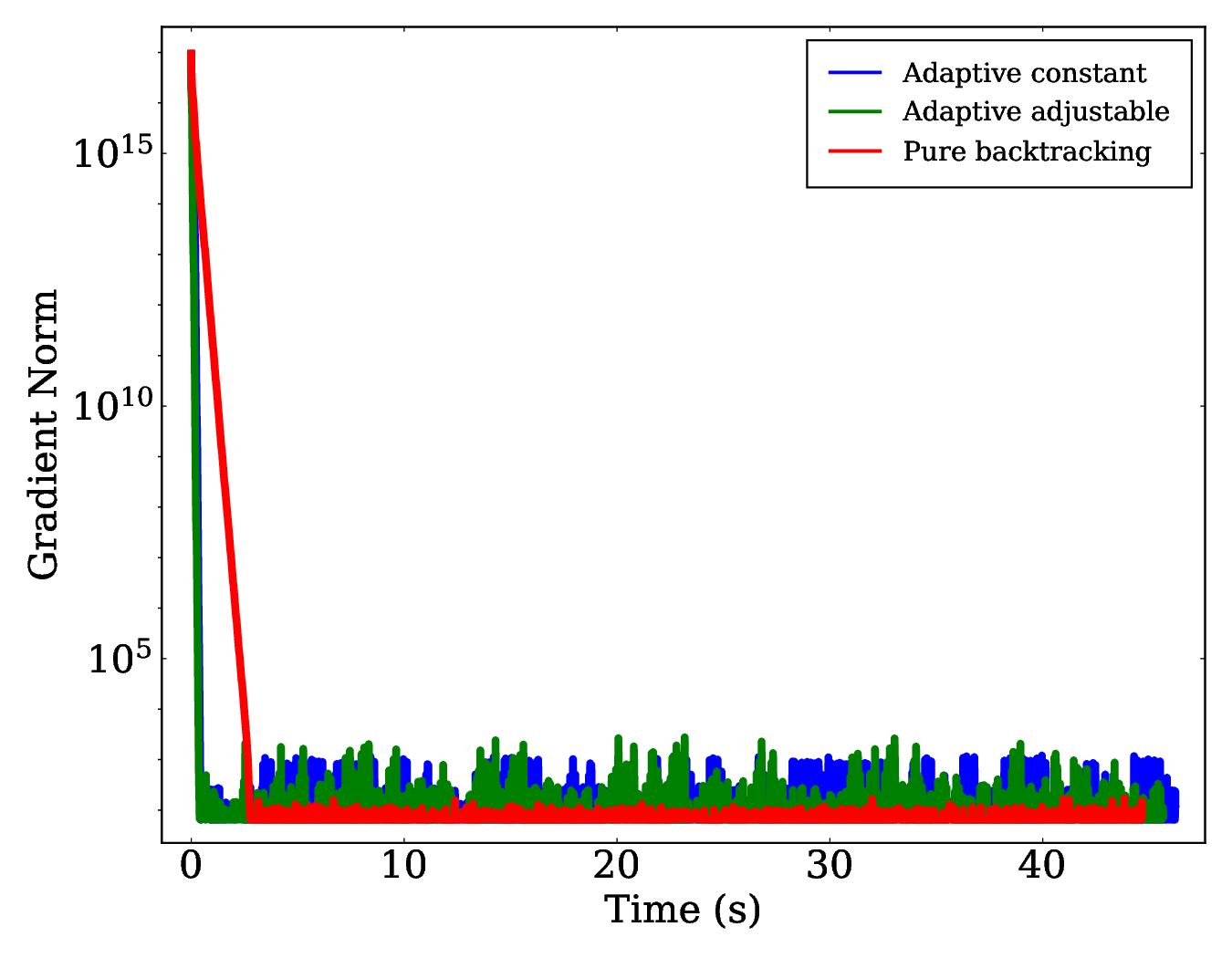}
}
\caption{Convergence behavior and computational efficiency for the convex Zakharov function. Note that we truncate the first 250 iterations, which are quite noisy.}
\label{fig:Zakharov_Function}
\end{figure}


\section{Conclusion}\label{sec:conclusion}

In this paper, we have presented adaptive step-size strategies for two important classes of first-order optimization algorithms that rely on LMOs: the Frank-Wolfe or Conditional Gradient algorithm for constrained optimization and the Normalized Steepest Descent method for unconstrained optimization. This culminated in the Adaptive Conditional Gradient Descent algorithm, \nameref{alg:ACGD}, for which we provided theoretically sound, practical methods for automatically selecting step-sizes without requiring prior knowledge of the Lipschitz constant.

For constrained settings, we demonstrated that \nameref{alg:ACGD} directly estimates the local Lipschitz constant from gradient differences at each iteration, adapting more rapidly to the local curvature compared to previous approaches \cite{pedregosa2020linearly}. We established convergence rates of $\mathcal{O}(1/k)$ for quasar-convex problems and $\mathcal{O}(1/\sqrt{k})$ for general non-convex problems, matching the worst-case theoretical guarantees of the classical Conditional Gradient method while eliminating the need for manual parameter tuning.

Similarly, for unconstrained settings, we proved a linear convergence rate for strongly convex problems and convergence rates of $\mathcal{O}(1/k)$ in the quasar-convex case and $\mathcal{O}(1/\sqrt{k})$ in the non-convex case. Our unified analysis framework gives a harmonized presentation of both methods, highlighting their theoretical connections despite addressing different optimization scenarios.

The key advantage of our approach lies in its practicality and simplified theoretical analysis. By dynamically adjusting step-sizes based on local function behavior, our methods maintain robust performance across diverse problem instances without requiring problem-specific parameter tuning. Allowing for fine-grained choices of the norms involved in both the backtracking procedure and the LMO computation gives more versatility, as shown in the numerical experiments. This adaptivity is particularly valuable in machine learning applications where problem characteristics may be initially unknown or change during the optimization process.

\begin{acknowledgements}
 The research of the first author was in part supported by a grant from IPM (No.1404900034).

\end{acknowledgements}

 \begin{appendix}
 \section*{Appendices}

 \begin{table}[htbp]
\centering
\caption{Closed-Form Solutions for selections of the LMO over Specified Feasible Regions} 
\label{tab:lmo}
\begin{tabular}{l l l}
\toprule
Name & Feasible Region $\mathcal{C}$ & Closed Form $\lmo\limits_{\mathcal{C}}(x)$ \\
\midrule
Scaled $\ell^2$-Ball & $\{ v \mid \|v\|_2 \leq \tau \}$ & $-\frac{\tau x}{\|x\|_2} $ if $x \neq 0 $, else 0 \\
Scaled Simplex & $\{ v \mid \sum v_i = \tau, v_i \geq 0 \}$ & $\tau e_j $, where $j = \argmin_i x_i $ \\
Scaled $\ell^1$-Ball & $\{ v \mid \|v\|_1 \leq \tau \}$ & $-\tau e_j \cdot \text{sign}(x_j) $, where $j = \arg\max_i |x_i| $ \\
General Box & $\{ v \mid -l_i \leq v_i \leq u_i \}$ & $v_i = -l_i $ if $x_i > 0 $, $u_i $ if $x_i < 0 $, 0 if $x_i = 0 $ \\
Scaled $\ell^\infty$-Ball & $\{ v \mid \|v\|_\infty \leq \tau \}$ & $v_i = -\tau \text{sign}(x_i) $ \\
Nuclear Norm-Ball & $\{ V \mid \|V\|_{\text{nuc}} \le \tau \}$ & $-\tau u v^\top, \text{ where } u, v \text{ are top singular vectors of } X$ \\
Spectral Norm Ball & $\{ V \mid \|V\|_{\text{op}} \le \tau \}$ & $-\tau UV^\top, \text{ where } X=U\Sigma V^\top \text{ is the reduced-SVD of } X$ \\
\bottomrule
\end{tabular}
\end{table}

 \end{appendix}

\bibliographystyle{spmpsci}
\bibliography{bibliography}

\end{document}